 \documentclass[letterpaper,11pt]{article}


\usepackage{etoolbox}
\newtoggle{anonymous}
\togglefalse{anonymous}

\usepackage[usenames,dvipsnames,svgnames,table]{xcolor}
\usepackage[colorlinks=true,linkcolor=blue,citecolor=ForestGreen]{hyperref}
\usepackage{amsmath,amsthm,amssymb,enumerate}
\usepackage{scrextend}
\usepackage[utf8]{inputenc}
\usepackage[T1]{fontenc}
\usepackage{pgfplots}
\usepackage{paralist}
\usepackage{enumitem}

\usepackage{amsmath}
\usepackage{amssymb}
\usepackage{bm}
\usepackage{thmtools}
\usepackage{hyperref}
\usepackage[capitalize]{cleveref}
\usepackage{float}
\usepackage[color=green!40,textsize=small]{todonotes}
\usepackage{xcolor}
\usepackage{forest}

 \usepackage[margin=1in]{geometry}

\usepackage{setspace}
\setstretch{1}

\allowdisplaybreaks

\Crefname{figure}{Figure}{Figures}
\Crefname{claim}{Claim}{Claims}

\crefformat{equation}{\textup{#2(#1)#3}}
\crefrangeformat{equation}{\textup{#3(#1)#4--#5(#2)#6}}
\crefmultiformat{equation}{\textup{#2(#1)#3}}{ and \textup{#2(#1)#3}}
{, \textup{#2(#1)#3}}{, and \textup{#2(#1)#3}}
\crefrangemultiformat{equation}{\textup{#3(#1)#4--#5(#2)#6}}%
{ and \textup{#3(#1)#4--#5(#2)#6}}{, \textup{#3(#1)#4--#5(#2)#6}}{, and \textup{#3(#1)#4--#5(#2)#6}}

\Crefformat{equation}{Equation~\textup{#2(#1)#3}}
\Crefrangeformat{equation}{Equations~\textup{#3(#1)#4--#5(#2)#6}}
\Crefmultiformat{equation}{Equations~\textup{#2(#1)#3}}{ and \textup{#2(#1)#3}}
{, \textup{#2(#1)#3}}{, and \textup{#2(#1)#3}}
\Crefrangemultiformat{equation}{Equations~\textup{#3(#1)#4--#5(#2)#6}}%
{ and \textup{#3(#1)#4--#5(#2)#6}}{, \textup{#3(#1)#4--#5(#2)#6}}{, and \textup{#3(#1)#4--#5(#2)#6}}


\usepackage{tikz}
\tikzset{normalnode/.style={circle, draw, fill=black, inner sep=0, minimum width=1.5mm}}

\usetikzlibrary{fit}
\usetikzlibrary{shapes,arrows,shadows,positioning}
\usetikzlibrary{backgrounds}
\usetikzlibrary{calc}

\definecolor{agreen}{rgb}{.24,.86,.53}

\newcommand{\Nn}{\mathbb{N}}

\newcommand{\Fc}{\mathcal{F}}

\newcommand{\Hc}{\mathcal{H}}

\newcommand{\Nc}{\mathcal{N}}
\newcommand{\Pc}{\mathcal{P}}
\newcommand{\Uc}{\mathcal{U}}
\newcommand{\Qc}{\mathcal{Q}}
\newcommand{\Xc}{\mathcal{X}}

\newcommand{\Sc}{\mathcal{S}}
\newcommand{\Cc}{\mathcal{C}}
\newcommand{\Dc}{\mathcal{D}}

\newcommand{\sub}{\#\mathrm{Sub}}
\newcommand{\emb}{\#\mathrm{Emb}}

\newcommand{\dt}[2]{#1\text{-}\operatorname{subd}(#2)}
\newcommand{\lab}[1]{\operatorname{Lab}(#1)}

\newtheorem{theorem}{Theorem}[section]

\newtheorem{proposition}[theorem]{Proposition}

\newtheorem{corollary}[theorem]{Corollary}
\newtheorem{observation}[theorem]{Observation}
\newtheorem{conjecture}[theorem]{Conjecture}

\newtheorem{lemma}[theorem]{Lemma}

\theoremstyle{definition}

\theoremstyle{remark}

\newtheorem{claim}[theorem]{Claim}

\newenvironment{poc}{\begin{proof}[Proof of {C}laim.]}{\end{proof}}


\newcommand{\aut}{\mathrm{aut}}

\newcommand{\ctg}{\mathrm{ctg}}

\renewcommand{\leq}{\leqslant}
\renewcommand{\geq}{\geqslant}

\newcommand{\Ex}[1]{\mathbb{E} \left[\,#1\,\right]}
 
\newcommand{\comment}[1]{}

\title{Adjacency Labeling Schemes for Small Classes}

\date{}

 \iftoggle{anonymous}{
 	\author{ }
 }{%
	\author{{\'E}douard Bonnet\thanks{Univ.~Lyon, ENS de Lyon, UCBL, CNRS, LIP, France,  \texttt{edouard.bonnet@ens-lyon.fr}}    \and Julien Duron\thanks{Univ.~Lyon, ENS de Lyon, UCBL, CNRS, LIP, France, \texttt{julien.duron@ens-lyon.fr}} \and    John Sylvester\thanks{Department of Computer Science, University of Liverpool, UK, \texttt{john.sylvester@liverpool.ac.uk}}
	\and
	Viktor Zamaraev\thanks{Department of Computer Science, University of Liverpool, UK, \texttt{viktor.zamaraev@liverpool.ac.uk}} }
}

\begin{document}
\maketitle

\begin{abstract}
  A~graph class admits an \emph{implicit representation}\footnote{To simplify and lighten the abstract, we give the definition of \emph{implicit representations} for \emph{factorial} classes. The reader will find the general definition in the introduction.} if, for every positive integer $n$, its $n$-vertex graphs have a~$b(n)$-bit (adjacency) labeling scheme with $b(n)=O(\log n)$, i.e., their vertices can be labeled by binary strings of length $b(n)$ such that the presence of an edge between any pair of vertices $u, v$ can be deduced solely from the labels of~$u$ and $v$.
  The famous Implicit Graph Conjecture posited that every \emph{hereditary} (i.e.,~closed under taking induced subgraphs) \emph{factorial} (i.e.,~containing $2^{O(n \log n)}$ $n$-vertex graphs) class admits an implicit representation.
  The conjecture was finally refuted [Hatami and Hatami, FOCS '22], and does not even hold among \emph{monotone} (i.e.,~closed under taking subgraphs) factorial classes [Bonnet et al., ICALP '24].
  However, monotone \emph{small} (i.e., containing at most $n! c^n$ many $n$-vertex graphs for some constant~$c$) classes do admit implicit representations.

  This motivates the \emph{Small Implicit Graph Conjecture}: Every hereditary small class admits an~$O(\log n)$-bit labeling scheme.
  We provide evidence supporting the Small Implicit Graph Conjecture.
  First, we show that every small \emph{weakly sparse} (i.e., excluding some fixed bipartite complete graph as a subgraph) class has an implicit representation.
  This is a~consequence of the following fact of independent interest proven in the paper:
  Every weakly sparse small class has bounded expansion (hence, in particular, bounded degeneracy).
  The latter generalizes and strengthens the previous results that every monotone small class has bounded degeneracy [Bonnet et al., ICALP '24], and that every weakly sparse class of bounded twin-width has bounded expansion [Bonnet et al., Combinatorial Theory '22].
  Second, we show that every hereditary small class admits an $O(\log^3 n)$-bit labeling scheme, which provides a~substantial improvement over the best-known polynomial upper bound of $n^{1-\varepsilon}$ on the size of adjacency labeling schemes for such classes.
  To do so, we establish that every small class has \emph{neighborhood complexity} $O(n \log n)$, also of independent interest.
  We then apply a~classic result, due to Welzl [SoCG '88], on efficiently ordering the universe of a~set system of low Vapnik-Chervonenkis dimension such that every set can be described as the union of a~limited number of intervals along this order.  
\end{abstract}


\newpage

\section{Introduction}
\label{sec:intro}
 
A \emph{class} of graphs is a~set of graphs which is closed under isomorphism. 
For a~class of graphs $\Xc$ we denote by $\Xc_n$ the set of graphs in $\Xc$ with vertex set $[n]$. 
Let $\Xc$ be a~class of graphs and $b : \Nn \rightarrow \Nn$ be a~function.
A~\emph{$b(n)$-bit adjacency labeling scheme} (or simply \emph{$b(n)$-bit labeling scheme})  for $\Xc$ is a~pair (encoder, decoder) of algorithms where for any $n$-vertex graph $G\in \Xc_n$ the encoder assigns binary strings, called \emph{labels}, of length $b(n)$ to the vertices of $G$ such that the adjacency between any pair of vertices can be inferred by the decoder only from their labels.
We note that the decoder depends on the class $\Xc$, but not on the graph~$G$.
The function~$b(\cdot)$ is the \emph{size} of the labeling scheme.
Adjacency labeling schemes were introduced by Kannan, Naor, and Rudich \cite{KNR88,KNR92},
and independently by Muller \cite{Muller88} in the late 1980s and have been actively studied since then.

The binary word, obtained by concatenating labels of the vertices of a~graph $G \in \Xc_n$ assigned by an adjacency labeling scheme, uniquely determines graph $G$.
Thus, a~$b(n)$-bit labeling scheme cannot represent more than $2^{n b(n)}$ graphs on $n$ vertices, and therefore, if $\Xc$ admits a~$b(n)$-bit labeling scheme, then $|\Xc_n| \leq 2^{n b(n)}$.
This implies a~lower bound of $\frac{\log |\Xc_n|}{n}$ on the size $b(n)$ of any adjacency labeling scheme for $\Xc$.
We say that a~graph class $\Xc$ admits an \emph{implicit representation}, if it admits an information-theoretic \emph{order optimal} adjacency labeling scheme, i.e.,\ if $\Xc$ has a~$b(n)$-bit labeling scheme, where $b(n) = O(\log |\Xc_n|/n)$. 
A natural and important question is: which classes admit an adjacency labeling scheme of a~size that matches this information-theoretic lower bound?

Of particular interest is the case of adjacency labeling schemes of size $O(\log n)$.
This is because, under the natural assumption that vertices get assigned pairwise distinct labels, $\lceil \log n \rceil$ is a~lower bound on the size of any labeling
scheme. Thus, understanding the expressive power of the labeling schemes of size of order $\log n$ is a~natural question. 

By the above discussion adjacency labeling schemes of size $O(\log n)$ can only exist for (at most) \emph{factorial} graph classes, i.e., classes $\Xc$ in which the number $|\Xc_n|$ of $n$-vertex graphs grows not faster than $2^{\Theta(n \log n)}$. It is known that the latter condition alone is not enough to guarantee adjacency labels of size $O(\log n)$ \cite{Muller88}. Kannan, Naor, and Rudich \cite{KNR88} asked if the extra restriction on $\Xc$ of being \emph{hereditary}, i.e., closed under taking induced subgraphs, would be enough for the existence of $O(\log n)$-bit adjacency labeling scheme. 
This question was later stated by Spinrad \cite{Spi03} in the form of a~conjecture, reiterated by  Scheinerman  \cite[Chapter~6]{GraphBook}, that became known as the \emph{Implicit Graph Conjecture}.

\begin{labeling}{(\textit{IGC}):}
	\item[(\textit{IGC}):] Any hereditary factorial graph class admits an $O(\log n)$-bit labeling scheme.
\end{labeling}
 
The conjecture remained open for three decades until it was recently refuted in a~strong form by Hatami and Hatami, who showed that for any $\delta$ there exists a~hereditary factorial class that does not admit an $(n^{1/2 - \delta})$-bit labeling scheme \cite{HH22}. This refutation leaves wide open the question
of characterizing hereditary graph classes that admit $O(\log n)$-bit labeling schemes and no plausible conjectural dichotomy is currently available.

It was recently shown that IGC does not hold even in the family of \emph{monotone} graph classes, the hereditary classes which are closed under taking (not necessarily induced) subgraphs \cite{BDSZZ23mono}. On the positive side, it was shown in the same work that IGC holds for every monotone \emph{small} class, i.e., a~monotone class $\Xc$ with $|\Xc_n| \leq n! \cdot c^n$ for some constant $c > 0$. 

The hereditary small classes form a~subfamily of hereditary factorial classes that contains many classes of practical or theoretical importance. For instance, forests \cite{TreesOtter}, planar graphs \cite{Tut62}, classes of bounded treewidth \cite{BP69}, proper minor-closes classes \cite{NSTW06}, unit interval graphs \cite{Han82}, classes of bounded clique-width \cite{ALR09}, and more generally, classes of bounded twin-width \cite{BGK22} are all small.
All of these classes are known to admit order-optimal, and, sometimes, even asymptotically optimal adjacency labeling schemes.
For example, forests admit a~$(\log n + O(1))$-bit labeling scheme \cite{ADK17}, planar graphs admit a~$(1+o(1))\log n$-bit labeling scheme \cite{DEGJMM21}, classes of bounded treewidth admit a~$(1+o(1))\log n$-bit labeling scheme \cite{GL07}, proper minor-closed classes admit a~$(2+o(1))\log n$-bit labeling scheme \cite{GL07}, graphs of clique-width at most $k$ admit a~$O(k \log k \cdot \log n)$-bit labeling scheme \cite{Spi03,Banerjee22}, and graphs of twin-width at most $k$ admit a~$2^{2^{O(k)}} \cdot \log n$-bit labeling scheme \cite{BGK22}. 
It is known that not every hereditary (and even monotone) small class admits an asymptotically optimal labeling scheme of size $(1+o(1)) \log n$ \cite{BDSZZ23}.
However, this does not rule out the existence of order optimal labeling schemes of size $O(\log n)$ for all hereditary small classes.
Alon \cite{Alon23} recently showed that every hereditary class $\Xc$ with $|\Xc_n| = 2^{o(n^2)}$ (which include all hereditary small classes) admits an adjacency labeling scheme of size $n^{1-\varepsilon}$ for some $\varepsilon > 0$ depending on the class.
To our knowledge, no better upper bound was established for hereditary small classes.

The importance of hereditary small classes and the existence of $O(\log n)$-bit labeling schemes for all monotone small classes motivated the formulation of the \emph{Small Implicit Graph Conjecture}.

\begin{labeling}{(\textit{Small IGC}):}
	\item[(\textit{Small IGC}) \cite{BDSZZ23mono}:] Any hereditary small graph class admits an $O(\log n)$-bit labeling scheme.
\end{labeling}

\subsection{Our contribution} 

In this paper we provide evidence toward the Small Implicit Graph Conjecture in two independent directions.
First, we show that the Small IGC holds in the important special case of weakly sparse graph classes.
Secondly, we obtain an adjacency labeling scheme of polylogarithmic size for any hereditary small class substantially improving upon the best-known polynomial-size upper bound on adjacency labeling schemes for such classes.

Our first result is a~consequence of a~structural property of small classes of graphs.
To state this property, we need to introduce some terms.
A hereditary class $\Xc$ is said to be \emph{weakly sparse} if there exists an integer $t$ such that the complete bipartite graph $K_{t, t}$ is not a~subgraph of any graph of $\Xc$.
The \emph{degeneracy} of a~graph~$G$ is the minimum number $k$ such that every induced subgraph of~$G$ contains a~vertex of degree at most $k$.
We defer a formal definition of \emph{bounded expansion} until \cref{sec:prelim} as it is somewhat technical.
Instead, we note that \emph{bounded expansion} generalizes \emph{bounded degeneracy}, thus \cref{thm:small-ws-deg} a fortiori holds when the former is replaced by the latter.

\begin{restatable}{theorem}{smallweakdgen}\label{thm:small-ws-deg}
	Every weakly sparse small class has bounded expansion. 
\end{restatable}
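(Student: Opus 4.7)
We argue by contradiction, using the Nešetřil--Ossona de Mendez characterization of bounded expansion via shallow topological minors: $\Xc$ has bounded expansion if and only if, for every integer $r \ge 0$, there is a constant $c_r$ such that no graph in $\Xc$ contains, as a subgraph, a $(\le r)$-subdivision (each edge replaced by a path of length at most $r+1$) of any graph of average degree exceeding $c_r$. Assume, toward a contradiction, that this fails at some fixed depth $r$, so that for every $d$ some graph in $\Xc$ contains a $(\le r)$-subdivision of a graph of average degree at least $d$.

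The plan is then to pass to the auxiliary class
\[
  \Yc_r \;=\; \{\, H \,:\, \text{some $(\le r)$-subdivision of $H$ is a subgraph of some $G \in \Xc$} \,\}.
\]
This class is monotone: if $H' \subseteq H$, restricting a $(\le r)$-subdivision of $H$ to the edges of $H'$ yields a $(\le r)$-subdivision of $H'$, still a subgraph of any host $G \in \Xc$ that contained the original. If we can establish that $\Yc_r$ is also \emph{small}, i.e.\ $|(\Yc_r)_n| \le n! \cdot c^n$ for some constant $c$, then the result of Bonnet et al.\ [ICALP~'24] that every monotone small class has bounded degeneracy yields that $\Yc_r$ has bounded degeneracy and hence bounded maximum average degree, contradicting the hypothesis that arbitrarily dense graphs lie in $\Yc_r$. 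Doing this for every $r \ge 0$ gives bounded expansion of $\Xc$.

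The technical core is therefore to establish smallness of $\Yc_r$. We would bound $|(\Yc_r)_n|$ by counting, for each potential host $G \in \Xc_N$ (with $N \le n + r \binom{n}{2}$, large enough to accommodate a $(\le r)$-subdivision of an $n$-vertex graph), the number of labeled graphs $H$ on $[n]$ whose $(\le r)$-subdivision embeds into $G$. The smallness of $\Xc$ gives $|\Xc_N| \le N! \cdot c_0^N$ for some constant $c_0$, controlling the number of hosts, while the weakly sparse ($K_{t,t}$-free) hypothesis bounds the edge count of each host by $O(N^{2-1/t})$ via Kővári--Sós--Turán. The main obstacle is to convert this edge bound into a sufficiently strong bound on the number of systems of internally vertex-disjoint paths of length at most $r+1$ in a $K_{t,t}$-free graph, which is precisely what a $(\le r)$-subdivision embedding consists of. Overcoming this---likely via an induction on $r$, or a finer structural study of short paths in $K_{t,t}$-free graphs that goes beyond a raw edge-density bound---is where the main technical work lies.
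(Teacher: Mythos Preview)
Your reduction to the monotone case via the auxiliary class $\Yc_r$ is a natural idea, but the step you flag as ``the main technical work''---showing that $\Yc_r$ is small---is not merely technical: as stated, the counting cannot close. A graph $H \in (\Yc_r)_n$ may have up to $\binom{n}{2}$ edges, so a minimal host $G' \in \Xc$ carrying its $(\le r)$-subdivision can have $N = \Theta(rn^2)$ vertices; the smallness bound $|\Xc_N| \le N!\,c_0^{N}$ then only gives $2^{\Theta(n^2\log n)}$ hosts. The K\H{o}v\'ari--S\'os--Tur\'an bound $|E(G')| = O(N^{2-1/t})$ restricts the edges of the \emph{host}, not of $H$: plugging $N \le n + r|E(H)|$ into $|E(H)| \le C N^{2-1/t}$ yields only a trivial lower bound on $|E(H)|$ when $|E(H)| \gg n$. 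A more refined count via automorphisms (matching labeled $H$'s to labeled $\dt{r}{H}$'s in $\Xc_N$) gives roughly $n!\,c^{n+r|E(H)|}$ labeled $H$'s with a fixed edge count, so summing over $|E(H)|$ still produces a $2^{\Theta(n^2)}$ bound. Each route to salvaging the count requires $|E(H)| = O(n)$, i.e., bounded average degree of $\Yc_r$---precisely what you intend to \emph{deduce} from its smallness, so the argument is circular. The proposed ``induction on~$r$'' does not help either, since the $r=0$ base case asks whether the monotone closure of a weakly sparse small class is small, which again reduces to bounded degeneracy.

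The paper sidesteps this entirely by not attempting to prove $\Yc_r$ small. Using Dvo\v{r}\'ak's characterization together with K\"uhn--Osthus, it fixes a single $r\ge 1$ and a single graph $F$ of very large average degree with $\dt{r}{F}\in\Xc$. From $F$ it extracts (via \cref{lem:minDegSubgraphs}) a subgraph $H$ of large minimum degree possessing a bounded-degree spanning tree $T$, then counts the $\binom{dk-(k-1)}{\delta k-(k-1)}$ spanning subgraphs of $H$ containing $T$ with a prescribed edge count; each has automorphism group of size at most $(4d)^{2k}$ by \cref{lem:autTspanG}. Since $r\ge 1$, \cref{lem:sg2isg} and \cref{obs:autosubdivide} transport these to pairwise non-isomorphic \emph{induced} subgraphs of $\dt{r}{F}$ with the same automorphism counts, yielding more than $N!\,c^N$ labeled graphs in $\Xc_N$. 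Thus the weakly sparse hypothesis is used not through an edge-density bound on hosts, but solely to guarantee (via K\"uhn--Osthus) that one may take $r\ge 1$, which is what makes subgraphs of $F$ correspond to induced subgraphs of $\dt{r}{F}$.
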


This structural insight into small graph classes is of independent interest due to the significance of weakly sparse graph classes.
The notion of weakly sparseness is the broadest form of sparsity.
It generalizes the properties of bounded degree, bounded degeneracy, and nowhere denseness~\cite{sparsity}.
By the celebrated K\H{o}v\'ari--S\'os--Tur\'an theorem \cite{Kovari54}, among hereditary classes, weakly sparse classes are precisely the classes of graphs with a~truly subquadratic number of edges.
It is often observed that the extra restriction of being weakly sparse significantly simplifies the structure of graphs possessing a~particular property.
For example, weakly sparse graph classes of bounded clique-width have bounded treewidth \cite{GurskiW00}; weakly sparse graph classes of bounded twin-width have bounded expansion, and thus bounded degeneracy \cite{BGK22} (a~fact that \cref{thm:small-ws-deg} generalizes, as it was proven in the same paper that every class of bounded twin-width is small); weakly sparse string graphs have bounded degeneracy \cite{Fox14}, and even polynomial expansion~\cite{DvorakN16}; weakly sparse graph classes that are well-quasi-ordered by the induced-subgraph relation have bounded pathwidth \cite{ALR19}, etc. 

A~conjecture from 2016~\cite{Warwick16}, driving an ongoing program in algorithmic graph theory, suggests
that, under some widely-believed complexity-theoretic assumption, a~hereditary graph class admits a~fixed-parameter tractable (FPT) first-order (FO) model-checking algorithm if and only if it is monadically dependent.\footnote{As with the other side remarks, we do not define the technical terms in this sentence since they will not reappear.}
It was recently proven that every hereditary small class is monadically dependent~\cite{Dreier24b} (and, in the same paper, the \emph{only if} part of the conjecture was established).
Thus, if the conjecture holds, it should in particular be true that every hereditary small class admits an FPT FO model-checking algorithm.
\cref{thm:small-ws-deg} confirms that this is indeed the case within weakly sparse classes, as such an algorithm is known to exist for classes of bounded expansion~\cite{DvorakKT13}.

Dvoř{\'{a}}k and Norine proved that if the expansion of a~class is upper bounded by a~sufficiently slowly-growing subexponential function, then the class is small~\cite{DvorakN10}.
From the definition of \emph{expansion} given in the next section, it should be clear that the converse cannot hold since the small classes consisting of all complete graphs or of all bipartite complete graphs have unbounded expansion.
A~minimum requirement to establish a~partial converse is thus that the class is weakly sparse.
\cref{thm:small-ws-deg} confirms that under this condition the expansion of any small class is indeed bounded (albeit not necessarily by a~subexponential nor a single-exponential function).

We apply \cref{thm:small-ws-deg} to obtain our first main result that the Small IGC holds for weakly sparse graph classes. This follows from 
a~classical labeling scheme for classes of bounded degeneracy \cite{KNR88} (see also \cite[Lemma 3.5]{BDSZZ23mono}).

\begin{theorem}\label{thm:ws-ls}
	Every weakly sparse small class admits an $O(\log n)$-bit adjacency labeling scheme.
\end{theorem}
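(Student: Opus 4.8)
The plan is to reduce the statement to the case of classes of bounded degeneracy, for which an $O(\log n)$-bit scheme is classical, and to obtain the degeneracy bound for free from the structural result we have just stated. Let $\Xc$ be a weakly sparse small class. By \cref{thm:small-ws-deg}, $\Xc$ has bounded expansion, and (as noted right after that statement) bounded expansion implies bounded degeneracy; so there is a constant $d = d(\Xc)$ such that every $G \in \Xc$ is $d$-degenerate. It therefore suffices to exhibit, for every integer $d$, an $O(d\log n)$-bit adjacency labeling scheme for the class of $d$-degenerate $n$-vertex graphs, since then plugging in the constant $d = d(\Xc)$ yields a scheme of size $O(d(\Xc)\log n) = O(\log n)$ for $\Xc$. (The decoder is allowed to depend on $\Xc$, hence on $d$.)

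For the scheme itself I would use the folklore construction for degenerate graphs attributed to Kannan, Naor, and Rudich~\cite{KNR88} (see also \cite[Lemma~3.5]{BDSZZ23mono}). Given $G \in \Xc_n$ on vertex set $[n]$, fix a degeneracy ordering of $V(G)$: an ordering in which every vertex has at most $d$ neighbours appearing later. Orient each edge from its earlier endpoint to its later endpoint; then every vertex has out-degree at most $d$, and the orientation is acyclic. The label of a vertex $v$ consists of the binary encoding of $v$ itself together with the binary encodings of its (at most $d$) out-neighbours, padded to a fixed length; this uses at most $(d+1)\lceil \log n\rceil + O(\log(d+1))$ bits, i.e.\ $O(d \log n)$ bits. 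To decide adjacency of two vertices $u$ and $v$ from their labels, the decoder checks whether $v$ occurs in the out-neighbour list stored in the label of $u$, or $u$ occurs in the out-neighbour list stored in the label of $v$; since every edge is oriented one way or the other, this correctly recovers adjacency. Combining this with the reduction of the previous paragraph completes the proof.

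The only non-routine ingredient here is \cref{thm:small-ws-deg}, which we are entitled to assume; everything else is a direct application of a classical labeling scheme and poses no real obstacle. The one point worth spelling out carefully is that $d(\Xc)$ is an absolute constant depending only on the class (and not on $n$), so that the factor $d$ is absorbed into the $O(\cdot)$; this is exactly what "bounded degeneracy" provides. In particular, no quantitative control on the expansion function of $\Xc$ is needed for this corollary — only its finiteness.
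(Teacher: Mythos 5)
Your proof is correct and takes exactly the same route as the paper: invoke \cref{thm:small-ws-deg} to get bounded expansion, hence bounded degeneracy, and then apply the classical Kannan--Naor--Rudich scheme for $d$-degenerate graphs. The paper states this in one line (citing \cite{KNR88} and \cite[Lemma~3.5]{BDSZZ23mono}), whereas you spell out the degeneracy-ordering construction explicitly, but the substance is identical.
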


Our second main result holds more generally for any hereditary small class and goes some way toward the full strength of the Small IGC.  

 \begin{restatable}{theorem}{main}\label{thm:overall-main}
	Every hereditary small class admits an $O(\log^3 n)$-bit adjacency labeling scheme.
\end{restatable}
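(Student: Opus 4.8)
For weakly sparse small classes \cref{thm:ws-ls} already gives $O(\log n)$ bits, but in the general hereditary case we cannot route through sparsity (the class of all complete graphs is small), so the plan is the alternative strategy announced in the abstract: reduce adjacency to an interval-membership query under a well-chosen linear order. Concretely, fix $G\in\Xc_n$ and let $\Nc_G$ be the set system on ground set $V(G)$ whose sets are the open neighborhoods $N_G(v)$, $v\in V(G)$. Suppose we can equip $V(G)$ with a linear order $\sigma\colon V(G)\to[n]$ such that every set $N_G(v)$ is a union of at most $k$ intervals of $\sigma$. Then $\Xc$ admits a labeling scheme of size $O(k\log n)$: the label of $v$ records $\sigma(v)$ (costing $\lceil\log n\rceil$ bits) together with the at most $k$ intervals whose union is $\{\sigma(w):w\in N_G(v)\}$, each interval being a pair of positions. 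Given the labels of $u$ and $v$, the decoder reads $\sigma(u),\sigma(v)$ and $u$'s interval list, and outputs ``adjacent'' iff $\sigma(v)$ lies in one of $u$'s intervals; correctness is immediate, since positions are pairwise distinct and, by construction, $\sigma(v)$ is covered exactly when $v\in N_G(u)$. So it suffices to produce such an order with $k=O(\log^2 n)$, and the order may depend on $G$ since the encoder sees $G$ while the decoder performs only interval membership and thus depends on $\Xc$ alone.

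The order is produced from two ingredients. The first is a bound on the \emph{neighborhood complexity} of small classes: there is a constant $C=C(\Xc)$ such that for every $G\in\Xc$ and every $A\subseteq V(G)$ one has $|\{N_G(v)\cap A: v\in V(G)\}|\le C\,|A|\log|A|$; equivalently, the primal shatter function of $\Nc_G$ is $O(m\log m)$. This immediately forces $\Nc_G$ to have bounded VC dimension, since a shattered set of size $d$ would give $2^d\le Cd\log d$, hence $d=O(1)$. The second ingredient is the classical theorem of Welzl on spanning paths of low crossing number, in the form refined (by iterated reweighting) for set systems whose shatter function is near-linear: applied to $\Nc_G$ it yields a linear order of $V(G)$ in which every neighborhood is a union of $O(\log^2 n)$ intervals --- one logarithmic factor coming from the depth of the reweighting recursion and the other from the $\log m$ slack in the shatter function. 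Feeding $k=O(\log^2 n)$ into the scheme of the previous paragraph proves \cref{thm:overall-main}.

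The real work is the neighborhood-complexity bound, and I expect it to be the main obstacle. The intended proof is a counting argument against smallness: a small class contains, up to isomorphism, at most $c^{N}$ graphs on $N$ vertices (up to a mild correction for automorphism-group sizes), so if, for every constant $C$, some $G\in\Xc$ had a vertex subset $A$ of size $m$ realizing more than $C\,m\log m$ distinct traces $N_G(v)\cap A$, one should be able to combine induced subgraphs of $G$ (available since $\Xc$ is hereditary) with relabelings to build, for every constant $c$ and arbitrarily large $N$, more than $c^N$ pairwise non-isomorphic $N$-vertex members of $\Xc$, which may be taken asymmetric to sidestep automorphism issues, contradicting $|\Xc_N|\le N!\,c^N$. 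The delicate points are selecting the sub-configuration so that different choices genuinely yield non-isomorphic graphs, and tuning the bookkeeping so that the contradiction already fires at the threshold $m\log m$ rather than at a coarser one such as $m^2$; this is the technical heart of the argument, whereas bounded VC dimension, the Welzl order, and the interval encoding above are comparatively routine.
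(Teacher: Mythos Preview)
Your proposal is correct and follows essentially the same route as the paper: neighborhood complexity $O(m\log m)$ (\cref{thm:neighborhood-complexity}) $\Rightarrow$ bounded VC dimension $\Rightarrow$ Welzl's low-crossing path gives contiguity $O(\log^2 n)$ (\cref{cor:welzl}) $\Rightarrow$ interval labels of size $O(\log^3 n)$ (\cref{prop:cont-to-ls}). One tactical note on the ``main obstacle'': the paper sidesteps the isomorphism/asymmetry issue entirely by fixing the labels on $A$ as $[n]$ and counting \emph{labeled} $n_1$-subsets of $B$ (with $n_1\approx n\log n$), each of which yields a distinct labeled $(n+n_1)$-vertex graph in $\Xc$ because vertices of $B$ have pairwise distinct traces on $A$; this is cleaner than manufacturing non-isomorphic or asymmetric graphs.
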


This result is based on our second structural result of independent interest that hereditary small graph classes have low neighborhood complexity.

 \begin{restatable}{theorem}{neighborhoodComplexity}\label{thm:neighborhood-complexity}
	The neighborhood complexity of any hereditary  small  graph class is $O(n \log n)$.
\end{restatable}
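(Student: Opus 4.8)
The plan is to fix a graph $G \in \Xc$ and a set $S \subseteq V(G)$ with $|S| = n$, and to bound by $O(n \log n)$ the number $m$ of distinct \emph{traces} $N_G(v) \cap S$ realised by the vertices $v \in V(G)$, with the hidden constant depending only on the small class $\Xc$. The elementary engine is a counting remark. Fix any subset $S' \subseteq S$; all but at most $|S'|$ of the distinct traces on $S'$ are realised by some vertex outside $S'$, so let $p$ be the number of distinct $S'$-traces realised outside $S'$ and fix representatives $w_1, \dots, w_p$ outside $S'$, one per such trace. For any injection $\iota \colon [t] \to [p]$ (with $t \le p$), the labelled graph on $\{1, \dots, |S'| + t\}$ obtained by placing a fixed copy of $S'$ on the first $|S'|$ vertices and $w_{\iota(1)}, \dots, w_{\iota(t)}$ on the last $t$ vertices is a relabelling of an induced subgraph of $G$, hence lies in $\Xc_{|S'| + t}$; and two distinct injections give distinct labelled graphs, because the set of first-block vertices adjacent to the $j$-th last-block vertex is exactly the $S'$-trace of $w_{\iota(j)}$, and these traces are pairwise distinct. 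Hence $|\Xc_{|S'| + t}| \ge p!/(p - t)! \ge (p - t)^t$ for all $t \le p$.

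Taking $S' = S$ (so $p \ge m - n$) with $t = n$ and using the smallness bound $|\Xc_{2n}| \le (2n)!\,c^{2n}$ already gives $m = O(n^2)$; taking $S'$ to be a shattered set of size $d$ (so $p \ge 2^d - d$) with $t = d$ gives $2^d - 2d \le 4c^2 d^2$, so the trace set systems occurring in $\Xc$ have \emph{bounded VC dimension}. These two facts together only deliver neighbourhood complexity $O(n^2)$ (Sauer--Shelah yields the worse $O(n^d)$), and \textbf{improving this to $O(n \log n)$ is where the real difficulty lies}. The structural reason for the loss: a $2\beta$-vertex graph built from a $\beta$-vertex base that carries $p$ traces spends all $\beta$ base vertices while extracting only $\approx \beta \log p$ bits from the choice of $\iota$, so it certifies merely $p = O(\beta^2)$ — a genuine obstruction to smallness only once some $\beta$-element subset of $S$ supports $\omega(\beta^2)$ traces. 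To reach $O(n \log n)$ one must instead run the engine with a base of size $O(\log n)$ that still separates $\Omega(m / \operatorname{polylog} n)$ of the $m$ traces, or conclude that no such base exists only when $m$ is already small.

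The route I would try for producing such a small base exploits the bounded VC dimension through a Welzl/Chazelle-type ``spanning path'' ordering of $S$ — the same tool the paper invokes afterwards to build its labelling scheme — making every trace a union of few intervals; cutting $S$ along this order into $\operatorname{polylog}(n)$ contiguous blocks, the block-level pattern of a trace (which blocks it contains, which it only meets) leaves only $\operatorname{polylog}(n)$-fold ambiguity, so a transversal of the few ``meeting'' blocks, of polylogarithmic total size, should serve as a base of size $O(\operatorname{polylog} n)$ distinguishing $m / \operatorname{polylog}(n)$ traces; feeding this base into the counting engine and comparing with smallness then aims to pin $m$ at $O(n \log n)$. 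A complementary ingredient — needed because a small class need not be weakly sparse — is a sparse/dense split: on weakly sparse parts \cref{thm:small-ws-deg} already forces bounded expansion and hence \emph{linear} neighbourhood complexity, while the densely connected pieces of a small class ought to be structured enough (complete and complete-bipartite blow-ups) that they contribute only $O(n)$ extra traces. I expect the quantitative control of this ``compressibility'' of the trace systems of a small class, rather than the bookkeeping of the first two paragraphs, to be the crux.
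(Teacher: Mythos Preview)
Your counting engine in the first paragraph is correct and is exactly what the paper uses. The genuine gap is in the second paragraph, where you tacitly assume that the number $t$ of representatives should be comparable to the base size $|S'|$. This is what produces your $p = O(\beta^2)$ ceiling, and it is an artificial constraint: your own inequality $|\Xc_{|S'|+t}| \ge p!/(p-t)!$ holds for every $t \le p$, and the paper simply takes $t$ \emph{much larger} than $|S'|$.

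Concretely, keep the base $S' = S$ of size $n$, but take $t = n_1 := n\lfloor \log n \rfloor$ representatives and set $N := n + n_1$. If $m > 9c^2 n \log(n{+}1)$ then $p \ge m - n > 8c^2 n \log n \ge 8c^2 n_1$, so the engine gives
\[
|\Xc_N| \;\ge\; n_1!\binom{p}{n_1} \;\ge\; n_1!\,\Bigl(\tfrac{p}{n_1}\Bigr)^{n_1} \;>\; n_1!\,(8c^2)^{n_1},
\]
while smallness gives $|\Xc_N| \le N!\,c^N \le n_1!\,(n_1+n)^n c^{2n_1}$. Cancelling $n_1!\,c^{2n_1}$ leaves $8^{n_1} > (n_1+n)^n$, i.e.\ $3n_1 \ge n\log(n_1+n)$, which holds because $n_1 \approx n\log n$ and $\log(n_1+n) \approx \log n + \log\log n$. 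That is the entire proof: no VC dimension, no Welzl ordering, no sparse/dense split.

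Your diagnosis that ``a $2\beta$-vertex graph \dots\ certifies merely $p = O(\beta^2)$'' is correct for $t=\beta$, but the fix is not to shrink the base --- it is to enlarge $t$. With $t = \beta\log\beta$ the graph has $N \approx \beta\log\beta$ vertices, the factorial upper bound is $\approx (\beta\log\beta)^{\beta\log\beta}$, while the lower bound $(p/t)^t \cdot t!$ beats it as soon as $p/t$ exceeds a constant; this pins $p$ (hence $m$) at $O(\beta\log\beta)$. The third paragraph's proposed route via low-crossing orderings is not only unnecessary but circular in the paper's architecture: the paper derives the contiguity bound \emph{from} the neighbourhood-complexity bound, not the other way round.
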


The neighborhood complexity is a~measure of structural complexity of graphs. Informally (a~formal definition is given in \cref{sec:prelim}), it reflects the diversity of the set system of the vertex neighborhoods in the graph.
Low neighborhood complexity of graph classes implies nice structural and algorithmic properties. For example, in the universe of monotone graph classes, linear neighborhood complexity characterizes classes of bounded expansion \cite{RVS19}, and almost linear neighborhood complexity characterizes nowhere dense classes \cite{EGKKP16}.
Low neighborhood complexity is also used to obtain FPT algorithms for various graph problems (see for instance~\cite{EGKKP16, BKRTW22}), and was recently used as a~central ingredient of an FPT algorithm for the FO model-checking problem on monadically stable graph classes \cite{DEMMPT23}. 
It is known that every hereditary graph class of bounded twin-width has linear neighborhood complexity \cite{BKRTW22} (see also \cite{BFLP24} for improved bounds).
We omit formal definitions and result statements, and refer the reader to the respective work.

We obtain our \cref{thm:overall-main} by combining \cref{thm:neighborhood-complexity} with some known result from the theory of sets systems about paths with low crossing number. In a~standard form the latter result is not suitable for our need, and therefore we reformulate it and provide a~self-contained proof.
We believe that the result stated in this form can be of use for other needs.

\subsection{Organization}	

The paper is organized as follows.
In \cref{sec:weakly-sparse}, we prove that weakly sparse small classes have bounded expansion (and thus bounded degeneracy); from this, we derive an $O(\log n)$-bit adjacency labeling scheme for such classes.
In \cref{sec:neighborhood-complexity}, we show that every hereditary small class has neighborhood complexity $O(n \log n)$. 
In \cref{sec:neigh-complexity-and-contiguity}, we use this result together with a~classical result from the theory of set systems stated in a~suitable form, to obtain $O(\log^3 n)$-bit labeling schemes for all hereditary small classes.
In \cref{sec:app-paths-low-crossing}, we provide a~self-contained proof of the required form of the result from the theory of set systems.
In the next section, we introduced necessary notation and state auxiliary facts.

\section{Preliminaries}
\label{sec:prelim}

\paragraph{Graphs.} We denote by $V(G)$ and $E(G)$ the vertex set and edge set of a~graph~$G$, respectively. When we refer to an $n$-vertex graph $G$ as \emph{labeled}, we mean that the vertex set of $G$ is $[n]$, and we distinguish two different labeled graphs even if they are isomorphic. 
For a~set of graphs $X$, we denote by $\lab{X}$ the set of all labeled graphs isomorphic to a~graph in $X$.
A graph $H$ is an \emph{induced subgraph} of~$G$ (resp.~\emph{subgraph} of~$G$), if it can be obtained by removing vertices of~$G$ (resp.~by removing vertices and edges of~$G$).
A subgraph $H$ of $G$ is \emph{spanning} if $V(H) = V(G)$.
We denote by $G[S]$ the subgraph of $G$ induced by $S$, i.e., obtained by removing every vertex of $V(G) \setminus S$. 

The \emph{degree} of a~vertex $v$ in $G$, denoted by $\deg(v)$, is the number of vertices in $G$ adjacent to~$v$.
The \emph{minimum} (respectively, \emph{maximum}) \emph{degree of $G$} is the minimum (respectively, maximum) degree of a~vertex in $G$.
The \emph{average degree of $G$} is the ratio $\sum_{v \in V(G)} \deg(v)/|V(G)|$.
Recall that the degeneracy of $G$ is the minimum number $k$ such that every induced subgraph of $G$ contains a~vertex of degree at most $k$.
The following are immediate consequences of these definitions: (1) if the minimum degree of $G$ is at least $d$, then the average degree of $G$ and the degeneracy of $G$ is at least $d$; (2) if the degeneracy of $G$ is at least $d$, then $G$ contains an induced subgraph of minimum degree at least $d$.
One more relation between these three parameters is the following folklore result, which is not hard to derive by iteratively removing vertices of degree less than $d/2$.
\begin{observation}\label{obs:avg-min-dgn}
	If the average degree of a~graph $G$ is at least $d$, then $G$ contains an induced subgraph of minimum degree, and thus average degree, at least $\lceil d/2 \rceil$.
\end{observation}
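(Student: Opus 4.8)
The plan is to use the classical ``peeling'' argument hinted at in the statement: repeatedly delete vertices of small degree while tracking a weighted balance between the number of edges and the number of vertices, so as to certify that this process cannot exhaust the whole graph.

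First I would translate the hypothesis into an edge-count form. Writing $n = |V(G)|$ and $m = |E(G)|$, the average degree being at least $d$ means $2m \geq dn$, i.e.\ the quantity $\phi(G) := m - \tfrac{d}{2}\, n$ is nonnegative. This $\phi$ will be our potential; note that for the empty graph it equals $0$.

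Next I would run the following process. As long as the current induced subgraph $G'$ of $G$ contains a vertex $v$ with $\deg_{G'}(v) \leq \lceil d/2 \rceil - 1$, delete $v$ and continue; otherwise stop. A single deletion changes the potential by $\Delta \phi = \tfrac{d}{2} - \deg_{G'}(v)$, and since $\deg_{G'}(v) \leq \lceil d/2 \rceil - 1 \leq \tfrac{d+1}{2} - 1 = \tfrac{d-1}{2}$, we get $\Delta \phi \geq \tfrac{1}{2} > 0$. Hence $\phi$ strictly increases (by at least $\tfrac12$) at each deletion step, while it starts at $\phi(G) \geq 0$.

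Finally, the termination argument: if the process deleted all $n$ vertices, the resulting empty graph would have potential $0$, yet after $n \geq 1$ strictly increasing steps from $\phi(G) \geq 0$ the potential would be at least $n/2 > 0$, a contradiction. Therefore the process halts at a nonempty induced subgraph $G'$ of $G$ in which every vertex has degree at least $\lceil d/2 \rceil$; since the average degree of any graph is at least its minimum degree, $G'$ also has average degree at least $\lceil d/2 \rceil$, which is exactly the claim. I do not anticipate any genuine obstacle; the only point requiring a little care is the treatment of the ceiling, namely checking that a vertex of degree below $\lceil d/2 \rceil$ indeed removes strictly fewer than $d/2$ edges, which the elementary inequality $\lceil d/2 \rceil - 1 < d/2$ resolves.
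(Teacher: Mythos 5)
Your proof is correct and uses exactly the peeling argument the paper alludes to (``iteratively removing vertices of degree less than $d/2$''), packaged via a clean potential function to certify that the peeling cannot exhaust the graph. One small caveat: the chain $\lceil d/2\rceil-1\le\tfrac{d+1}{2}-1=\tfrac{d-1}{2}$, and hence the bound $\Delta\phi\ge\tfrac12$, is valid when $d$ is an integer (as it is in the paper's application) but can fail for non-integer $d$ (e.g.\ $d=2.5$ gives $\lceil d/2\rceil-1=1>0.75$); for general real $d$ you should fall back on the strict inequality $\lceil d/2\rceil-1<d/2$, which you already note, giving $\Delta\phi>0$ --- strict increase of $\phi$ together with $\phi(G)\ge0$ and $\phi(\emptyset)=0$ already yields the contradiction, without any lower bound on the increment.
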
  

We use $G-S$ as a~shorthand for $G[V(G) \setminus S]$, and $G-v$, for $G-\{v\}$. We denote by $\text{Aut}(G)$ the automorphism group of a~graph~$G$, and we set $\aut(G) := |\text{Aut}(G)|$. 
If $X$ is a~set of pairwise non-isomorphic $n$-vertex graphs, then the number of labeled graphs isomorphic to a~graph in $X$ is exactly 
\begin{equation}\label{eq:lab-graphs}
		|\lab{X}| = \sum_{G \in X} \frac{n!}{\aut(G)}.
\end{equation}
For any non-negative integer $r$, the \emph{r-subdivision} of a~graph $G$, denoted by $\dt{r}{G}$, is the graph obtained from $G$ by adding one path $P_{uv}$ on $r$ vertices for each edge $uv \in E(G)$, and by replacing each edge $uv \in E(G)$ by a~$(r+1)$-edge path $uP_{uv}v$. In the special case were $r = 0$, $\dt{r}{G}$ is the graph $G$. We will refer to a~vertex from one of the paths $P_{uv}$ as a~\emph{subdivision vertex} and to other vertices as \emph{branching vertices}. 

For completeness, we include a~definition\footnote{Actually, we rather give an established characterization~\cite{DvovrakThesis} as it is more compact.} of classes of \emph{bounded expansion}.
A~class~$\mathcal C$ has \emph{bounded expansion} if there is a~function~$f: \mathbb N \to \mathbb R$ such that for every integer $r \geqslant 0$, no $r$-subdivision of a~graph of average degree larger than~$f(r)$ is a~subgraph of a~graph in~$\mathcal C$.
We will only apply existing results on classes of bounded expansion, and we will do so in a~black-box fashion.
Thus the reader should only observe that these classes are weakly sparse and of bounded average degree (by virtue of satisfying the given property for $r=0$).

\paragraph{Graph Classes.} A~class of graphs is \emph{hereditary} if it is closed under taking induced subgraphs, and it is \emph{monotone} if it closed under taking subgraphs.
A hereditary graph class is \emph{small} if there exists  a~constant $c$ such that the number of $n$-vertex \emph{labeled} graphs in the class is at most $n! \cdot c^n$ for every $n \in \Nn$.

\paragraph{Graph contiguity.} The \emph{contiguity} of a~graph $G$, denoted $\ctg(G)$, is the minimum integer $k$ such that there exists a~linear order of its vertices in which the neighborhood of each vertex of $G$ can be partitioned into at most $k$ disjoint intervals. 
For a~function $k : \Nn \rightarrow \Nn$ and a~class of graphs  $\Cc$, we say that $\Cc$ has contiguity at most $k$, if for every $n$-vertex graph $G \in \Cc$ the contiguity of $G$ is at most~$k(n)$.

The idea of contiguity was introduced in the context of range searching under the name of \emph{crossing number of spanning paths} \cite{Welzl88} (see also \cref{sec:app-paths-low-crossing}).
Independently, it was introduced in the context of DNA reconstruction under the name of \emph{$k$-consecutive ones property} \cite{GGKS95}, which generalizes the notion of \emph{consecutive ones property} for matrices \cite{BL76, FG65}.
More recently, the concept was utilized as part of an FPT algorithm for the FO model-checking problem on monadically stable graph classes \cite{DEMMPT23}, and has appeared in the context of implicit graph representations \cite{Alon23,AAALZ23,BDSZ24}. In particular, Alon used it to show that every hereditary graph class $\Xc$ with $|\Xc_n| = 2^{o(n^2)}$ admits a $n^{1-\varepsilon}$-bit labeling scheme for some $\varepsilon = \varepsilon(\Xc) > 0$ \cite{Alon23}. This was done via the following simple connection between contiguity and adjacency labeling schemes.

\begin{proposition}\label{prop:cont-to-ls}
	Let $k : \Nn \rightarrow \Nn$ be a~function, and $\Cc$ be a~class of graphs of contiguity at most $k$. Then $\Cc$ admits an adjacency labeling scheme of size at most $(2k(n) + 1)\log n$. 
\end{proposition}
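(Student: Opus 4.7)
The plan is to encode directly the linear order that witnesses small contiguity. Given $G \in \Cc_n$, fix a linear order $v_1, \ldots, v_n$ of $V(G)$ such that for every $v_i$, the neighborhood $N(v_i)$ is the union of at most $k(n)$ pairwise disjoint intervals of $[n]$ under this order; such an order exists by the definition of contiguity and the assumption on $\Cc$.

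For each vertex $v_i$, the encoder produces a label consisting of (i) the index $i$ written in $\lceil \log n \rceil$ bits, and (ii) the list of the (at most $k(n)$) intervals whose union is $N(v_i)$, each interval stored as a pair of endpoints from $[n]$, i.e., $2\lceil \log n \rceil$ bits per interval. Lists shorter than $k(n)$ are padded with a sentinel such as $[1,0]$ so that every label has the same length $(2k(n)+1)\lceil \log n \rceil$, which matches the claimed bound up to the usual rounding convention. The fixed component lengths ensure that the decoder can parse any received label unambiguously without any length prefix.

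Given the labels of two vertices $u$ and $v$, the decoder parses them to recover the indices $i_u, i_v$ and the interval list attached to $u$, and declares $u$ adjacent to $v$ iff $i_v$ lies in at least one of the intervals in $u$'s list. Correctness is immediate from the definition of contiguity: by construction, the image of $N(u)$ under the chosen order equals the union of the intervals stored in $u$'s label, so $v \in N(u)$ iff $i_v$ is covered by one of them. There is essentially no obstacle here — the proposition is a one-line translation of the definition of contiguity into the labeling framework — so the only care needed in writing the proof is to spell out the label format and the decoder's test unambiguously.
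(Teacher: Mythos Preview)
Your proposal is correct and follows essentially the same approach as the paper: fix a linear order witnessing the contiguity bound, label each vertex by its position together with the $2k(n)$ endpoints of its at most $k(n)$ neighborhood intervals, and decode by an interval-membership test. Your write-up merely adds explicit details about padding and parsing that the paper leaves implicit.
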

\begin{proof}
	Let $G$ be a~$n$-vertex graph  in $\Cc$. 
	To construct adjacency labels for $G$ of the target size, we fix a~linear ordering $\sigma$ of the vertices of $G$ witnessing contiguity at most $k(n)$. The label of a~vertex $v$ of $G$ then consists of its position in $\sigma$, followed by the positions of the endpoints of the at most $k(n)$ intervals describing the neighborhood of $v$.
	The decoder infers adjacency between two vertices $u$ and $v$ by testing if the position of $u$ in $\sigma$ belongs to one of the intervals describing the neighborhood of $v$.  
\end{proof}

\paragraph{Set systems, shatter functions, and neighborhood complexity.} 

A pair $(X, \Sc)$, where $X$ is a~finite set and $\Sc$ is a~family of subsets of $X$ is called a~set system.
The \emph{incidence matrix} $M$ of the set system $(X, \Sc)$ is the matrix with~$|X|$ columns and~$|\Sc|$ rows indexed by the elements of $X$ and~$\Sc$, respectively, where $M_{ij}$ is 1 if the $i$-th set of $\Sc$ contains the $j$-th element of $X$, and 0 otherwise.
	
	The \emph{primal shatter function} of a~set system $(X, \mathcal{S})$ is the function $\pi_{\mathcal{S}}$ given by 
	\[
		\pi_{\mathcal{S}}(m)= \max_{A\subseteq X, \, |A|=m} |\{Y\cap A~:~Y \in \mathcal{S} \}|. 
	\]
	
	The set system $(X^*, \Sc^*)$, where $X^* = \Sc$, $\Sc^* = \{ S_x~:~x \in X\}$, and $S_x = \{S \in \Sc~:~x \in X\}$, is called the dual set system of $(X, \Sc)$.
	The shatter function of $(X^*, \Sc^*)$ is called the \emph{dual shatter function} of $(X, \Sc)$.
    Note that if $M$ is the incidence matrix of $(X,S)$, then the transpose of $M$ is the incidence matrix of $(X^*, \Sc^*)$.
    A~subset $A \subseteq X$ is \emph{shattered} by $\Sc$ if every subset $B$ of $A$ can be obtained as the intersection $B = A \cap Y$ for some $Y \in \Sc$, i.e., $\{ Y \cap A~|~ Y \in \Sc \} = 2^A$.
    The \emph{Vapnik–Chervonenkis dimension} (or \emph{VC dimension} for short) of $\Sc$ is the maximum of the sizes of all shattered subsets of $X$.
	
	Given a~vertex $v$ of a~graph $G$, we use standard notation $N_G(v)$ to denote the set $\{u : vu\in E(G)\}$ of neighbors of $v$ in $G$.
	The \emph{neighborhood set system} of $G$ is the set system  $(V(G), \mathcal{N}_G)$ where  
	\[
		\mathcal{N}_G= \{ N_G(v) : v\in V(G)\}. 
	\]   
	
	Observe that the adjacency matrix of $G$ is the incidence matrix of $(V(G), \mathcal{N}_G)$.
        Thus, since the adjacency matrix of an undirected graph is symmetric, the primal and the dual shatter functions of a~neighborhood set system coincide.
	
	\begin{observation}\label{obs:primal-dual}
		For neighborhood set systems, the primal and dual shatter functions coincide. That is, for any $n$-vertex graph $G$,
		\[
			\pi_{\mathcal{N}_G}(m) = \pi_{\mathcal{N}_G}^{*}(m)\qquad\text{for all }m\in[n]. 
		\]
	\end{observation}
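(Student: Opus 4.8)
The plan is to reduce the statement to the symmetry of the adjacency matrix. First I would record the elementary fact that the primal shatter function of a set system $(X,\Sc)$ is an invariant of its incidence matrix $M$ alone: unwinding the definition, $\pi_{\Sc}(m)$ equals the maximum, over all choices of $m$ columns of $M$, of the number of pairwise distinct rows appearing in the resulting column-submatrix — because for $A \subseteq X$ two sets $Y,Y' \in \Sc$ satisfy $Y \cap A = Y' \cap A$ exactly when the corresponding rows of $M$ agree on the columns indexed by $A$. Dually, since — as already noted in the text — the transpose $M^{\top}$ is the incidence matrix of the dual set system, $\pi_{\Sc}^{*}(m)$ equals the maximum, over all choices of $m$ rows of $M$, of the number of pairwise distinct columns appearing in the corresponding row-submatrix.

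I would then apply this with $M = A_G$, the adjacency matrix of $G$: its distinct rows are exactly the indicator vectors of the members of $\Nc_G$, so $A_G$ is (after merging repeated rows, which changes no shatter function) an incidence matrix of $(V(G),\Nc_G)$, and its rows and columns are both indexed by $V(G)$. Since $G$ is undirected, $A_G = A_G^{\top}$, so for every $C \subseteq V(G)$ the column-submatrix of $A_G$ on $C$ is the transpose of the row-submatrix of $A_G$ on $C$; in particular the number of distinct rows of the former equals the number of distinct columns of the latter. Maximizing over all $m$-subsets $C \subseteq V(G)$ — the same index set in both computations — the two quantities from the previous paragraph coincide, which is precisely $\pi_{\Nc_G}(m) = \pi_{\Nc_G}^{*}(m)$.

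There is no real obstacle here; the whole content is the symmetry $A_G = A_G^{\top}$ together with the observation that transposition swaps a set system with its dual. The only point requiring a word of care is when $G$ has twin vertices, so that $|\Nc_G| < n$: then one works with the submatrix of $A_G$ on its distinct rows before invoking symmetry, and for $m$ larger than $|\Nc_G|$ one reads the dual shatter function under the usual convention that it stabilizes once $m$ reaches the size of the ground set.
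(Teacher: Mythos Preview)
Your argument is correct and follows exactly the approach the paper takes: the paper simply observes (in the sentence immediately preceding the statement) that the adjacency matrix of $G$ is the incidence matrix of $(V(G),\Nc_G)$ and is symmetric, hence the primal and dual shatter functions agree. Your write-up just unpacks this one-line remark, and in fact treats the twin-vertex case $|\Nc_G|<n$ more carefully than the paper does.
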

	
	We define the \emph{neighborhood complexity} of a~graph $G$, denoted by $\nu_{G}$, as the primal (equivalently, dual) shatter function of its neighborhood set system, i.e., $\nu_{G}(m) = \pi_{\mathcal{N}_G}(m) = \pi_{\mathcal{N}_G}^*(m)$ for all $m \in [|V(G)|]$.
 
	Given a~graph class $\Cc$, the \emph{neighborhood complexity} of $\Cc$  is the function 
	$\nu_{\Cc}: \mathbb{N} \rightarrow \mathbb{N}$ defined by 
	\[
		\nu_{\Cc}(n) := \sup_{G\in \Cc,\, A\subseteq V(G), \,|A|=n} |\{N(v)\cap A~:~v \in V(G) \}| 
								= \sup_{G\in \Cc} \pi_{\mathcal{N}(G)}(n)
								= \sup_{G\in \Cc} \nu_G(n).
	\]

\section{Implicit representation for weakly sparse small classes}
\label{sec:weakly-sparse}

In this section we prove \Cref{thm:small-ws-deg}, which states that any weakly sparse small class has bounded expansion, and  hence, in particular, bounded degeneracy.
Together with the known labeling scheme for bounded degeneracy graphs (see e.g. \cite[Lemma 3.5]{BDSZZ23mono}), this immediately implies that any weakly sparse small class admits an $O(\log n)$-bit adjacency labeling scheme (\cref{thm:ws-ls}).

The proof of \Cref{thm:small-ws-deg} relies on the fact (\cref{cor:DvorakKO}) that for any weakly sparse class $\Xc$ of unbounded expansion there exists an integer $r \geq 1$ such that $\Xc$ contains $r$-subdivisions of graphs of arbitrarily large average degree.
This is a consequence of the forward direction of a characterization of classes of (un)bounded expansion by Dvo{\v{r}}{\'a}k \cite{Dvorak18b} and a classical result due to K{\"{u}}hn and Osthus \cite{Kuhn04}, both stated below.

\begin{theorem}[{\cite[Theorem 5, $(a) \Leftrightarrow (c)$]{Dvorak18b}}]\label{thm:Dvorak}
A~hereditary graph class $\Xc$ has unbounded expansion if and only if there is an integer $r \geqslant 0$ such that for any integer~$d$, $\Xc$ contains the $r$-subdivision of a~graph of average degree at~least~$d$.
\end{theorem}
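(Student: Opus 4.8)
The plan is to treat the two directions separately. The reverse implication is immediate: if some fixed $r\ge 0$ is such that $\Xc$ contains, for every $d$, a graph $\dt{r}{G}$ with $G$ of average degree at least $d$, then since $\dt{r}{G}$ is a subgraph of itself and itself lies in $\Xc$, no bounding function $f$ as in the definition of bounded expansion can exist (it would need $f(r)\ge d$ for all $d$); hence $\Xc$ has unbounded expansion.

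For the forward implication, I would first reduce to a single subdivision parameter. For each $r$, let $g(r)$ be the supremum of the average degrees of graphs $G$ such that $\dt{r}{G}$ is a subgraph of a member of $\Xc$. If every $g(r)$ were finite, the map $r\mapsto g(r)$ would witness bounded expansion; so unbounded expansion yields a fixed $r_0\ge 0$ with $g(r_0)=\infty$, i.e., for every $d$ there are a graph $G_d$ of average degree at least $d$ and a graph $H_d\in\Xc$ containing a copy of $\dt{r_0}{G_d}$. Using heredity, I replace $H_d$ by its induced subgraph on the vertex set of that copy, so that $H_d\in\Xc$ is $\dt{r_0}{G_d}$ together with some set of ``chord'' edges. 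It then remains to extract from $H_d$, for some fixed $r_1$ (not necessarily equal to $r_0$), an induced subgraph isomorphic to $\dt{r_1}{G'_d}$ with the average degree of $G'_d$ still unbounded in $d$. When $r_0=0$ this is immediate: $H_d$ is then a supergraph of $G_d$ on the same vertex set, hence of average degree at least $d$, and \cref{obs:avg-min-dgn} gives an induced subgraph of average degree at least $\lceil d/2\rceil$ that equals its own $0$-subdivision and lies in $\Xc$, so $r_1=0$ works.

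The case $r_0\ge 1$ is the crux, and the main obstacle is to remove the chords of $H_d$ while keeping a uniform subdivision structure on a core of still-large average degree. I would first dispose of the easy regime: if the graphs $H_d$ themselves have average degree unbounded in $d$, then \cref{obs:avg-min-dgn} again produces a dense induced subgraph and we finish as in the $r_1=0$ case; so we may assume the chords are few. One would then look for a vertex set meeting all chords that destroys only a bounded fraction of $E(G_d)$ — deleting interior vertices of subdivision paths is cheap, costing at most one edge each, the real difficulty being chords joining two branching vertices — followed by a pigeonhole on the lengths of the surviving branching-to-branching paths, which lie in the bounded range $\{1,\dots,r_0+1\}$, to retain a positive fraction of the edges all having one common path length $r_1$. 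Making these deletions compatible, so that the surviving induced subgraph is genuinely a uniform $r_1$-subdivision and the average degree is reduced by only a bounded factor, is the delicate point; this is precisely the combinatorial argument of \cite{Dvorak18b}, where it is carried out via a counting/probabilistic deletion step.
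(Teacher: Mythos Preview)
The paper does not give its own proof of this theorem: it is quoted verbatim as \cite[Theorem~5, $(a)\Leftrightarrow(c)$]{Dvorak18b} and used as a black box (to derive \cref{cor:DvorakKO}). So there is nothing in the paper to compare your argument against.

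As for your proposal itself, the backward implication and the $r_0=0$ subcase of the forward implication are handled correctly. For $r_0\ge 1$, however, you do not actually prove anything: you sketch a plausible strategy (kill chords with a small hitting set, then pigeonhole on path lengths) and then explicitly say that carrying it out ``is precisely the combinatorial argument of \cite{Dvorak18b}''. That is circular---you are invoking the very theorem you set out to prove. If the intent is to give a self-contained proof, the $r_0\ge 1$ case is a genuine gap: the deletion of branching-vertex-to-branching-vertex chords and the simultaneous control of path lengths so that what remains is an \emph{induced} exact $r_1$-subdivision with only bounded loss in average degree is the whole content of Dvo\v{r}\'ak's argument, and none of it appears here. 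If instead the intent is merely to explain why the statement is true, then the honest thing is to do what the paper does and cite \cite{Dvorak18b} directly.
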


\begin{theorem}[{\cite[Theorem 2]{Kuhn04}}]\label{thm:KO}
	For all $d,t \in \Nn$, there exists an integer $D := D(d, t)$ such that every $K_{t, t}$-free graph $G$ of average degree at least $D$ contains $\dt{1}{F}$ as an induced subgraph for some graph $F$ of average degree at least $d$.
\end{theorem}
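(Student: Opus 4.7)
The plan is to reduce $G$ to a bipartite subgraph of high minimum degree and then carry out a probabilistic selection of branching vertices, invoking the $K_{t,t}$-free hypothesis via a Kővári-Sós-Turán-style codegree count to prevent too many subdivision candidates from collapsing onto a single pair of branching vertices.

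First, I would reduce to a bipartite subgraph $H$ of $G$ with parts $(A, B)$, minimum degree $D' = \Omega(D)$, and every $b \in B$ having $d_b \in [D', \Theta(D')]$. A max-cut bipartition captures at least half the edges of $G$ and, crucially, has the property that every vertex has at most half its $G$-degree within its own part; iterative removal of low-degree vertices (and a pruning step to bound maximum degrees) then gives the claimed $H$, which is still $K_{t,t}$-free. Without loss of generality assume $|B| \ge |A|$, swapping the roles of the parts otherwise.

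Next, sample $A^* \subseteq A$ by including each vertex independently with probability $p := c/D'$, for a constant $c = c(d)$ to be chosen. A direct calculation gives $\mathbb{P}[|N(b) \cap A^*| = 2] = \binom{d_b}{2} p^2 (1-p)^{d_b-2} = \Theta(1)$, so $\mathbb{E}[|B_2|] = \Theta(|B|)$ where $B_2 := \{b \in B : |N(b) \cap A^*| = 2\}$. Since the max-cut property forces $d_{G[A]}(v) \le d_G(v)/2$, the expected number of $G$-edges inside $A^*$ is $O(|A^*|)$, so deleting one endpoint of each such edge yields an $A^*$ that is independent in $G$; a symmetric cleanup handles $B$. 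The intended auxiliary graph $F$ has vertex set $A^*$ and edges the distinct pairs $\{u,v\}$ realized as $N(b) \cap A^*$ for some $b \in B_2$; keeping one witness $b$ per realized pair gives $B^* \subseteq B_2$ with $|B^*| = |E(F)|$, and by construction $G[A^* \cup B^*]$ is precisely the $1$-subdivision of $F$ as an induced subgraph.

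The main obstacle is lower-bounding $|E(F)|$, since several elements of $B_2$ may collapse onto a single pair $\{u,v\}$, controlled by the pair codegree $C_{uv} := |N(u) \cap N(v)|$. This is where the $K_{t,t}$-free hypothesis is essential, entering through the Kővári-Sós-Turán-style inequality
\[\sum_{\{u,v\} \subseteq A} \binom{C_{uv}}{t} \;\le\; \binom{t-1}{2}\binom{|B|}{t},\]
obtained by double counting pairs $(T, \{u,v\})$ with $T \in \binom{B}{t}$ and $\{u,v\} \subseteq \bigcap_{b \in T} N(b)$, together with the $K_{t,t}$-free bound $|\bigcap_{b \in T} N(b)| \le t-1$. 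This forces the large-codegree tail of the distribution to be sparse. A second-moment (Paley-Zygmund) computation for the events $\{\exists b \in N(u) \cap N(v) : N(b) \cap A^* = \{u,v\}\}$, combined with this codegree control, should give $\mathbb{E}[|E(F)|] = \Omega(|B|)$ up to factors depending only on $c$ and $t$. Together with $\mathbb{E}[|A^*|] = p|A| \le p|B|$, the expected average degree of $F$ is then $\Omega(D')$ up to such factors, exceeding $d$ once $D = D(d,t)$ is taken sufficiently large, and a derandomization (or averaging) argument extracts a deterministic choice of $A^*$ realizing the claim.
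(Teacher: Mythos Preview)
The paper does not give a proof of this statement: \Cref{thm:KO} is quoted from K\"uhn and Osthus and invoked as a black box (together with \Cref{thm:Dvorak}) to obtain \Cref{cor:DvorakKO}. There is therefore no argument in the paper to compare your sketch against.

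On the sketch itself: the high-level plan---pass to a bipartite subgraph of controlled degrees, randomly thin one side to serve as branching vertices, harvest degree-two vertices on the other side as subdivision vertices, and invoke the $K_{t,t}$-free hypothesis through a K\H{o}v\'ari--S\'os--Tur\'an codegree inequality---is in the right spirit, but two steps are genuinely incomplete. First, the lower bound on $|E(F)|$ is the crux, and ``a second-moment (Paley--Zygmund) computation \ldots\ should give $\mathbb{E}[|E(F)|]=\Omega(|B|)$'' is not an argument: one needs to control the number of \emph{distinct} pairs $\{u,v\}$ realized by $B_2$, and the displayed inequality bounds only the $t$-th moment of the codegrees, so translating it into the required first-moment lower bound takes real work that you have not supplied. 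Second, the ``symmetric cleanup'' to make $B^*$ independent in $G$ is not justified: the max-cut property bounds within-part degrees by half the $G$-degree, but after pruning $H$ to have $B$-degrees $\Theta(D')$ there is no control on the $G$-degrees of vertices in $B$, so $G[B_2]$ may have average degree far exceeding any constant and deleting one endpoint per edge could wipe out almost all of $B_2$. Both issues are surmountable (and are handled in the original K\"uhn--Osthus proof, which is more delicate than your outline suggests), but as written the sketch has real gaps rather than merely omitted routine details.
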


We now extract the announced fact, of which we will only need the \emph{only if} direction.

\begin{corollary}\label{cor:DvorakKO}
A~weakly sparse graph class $\Xc$ is of unbounded expansion if and only if there is an integer $r \geq 1$ such that for any integer~$d$, $\Xc$ contains the $r$-subdivision of a~graph of average degree at~least~$d$.
\end{corollary}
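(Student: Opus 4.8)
The plan is to deduce \cref{cor:DvorakKO} directly from \cref{thm:Dvorak} and \cref{thm:KO}. Since \cref{thm:Dvorak} is an ``if and only if'' and the extra content of the corollary over it is only that the subdivision parameter can be taken to be $r \geq 1$ (rather than $r \geq 0$) when the class is weakly sparse, the real work is a short argument handling the edge case $r = 0$.

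\textbf{The easy direction.} If there is an integer $r \geq 1$ such that $\Xc$ contains, for every $d$, the $r$-subdivision of a graph of average degree at least $d$, then a fortiori there is an integer $r \geq 0$ with this property, so \cref{thm:Dvorak} immediately gives that $\Xc$ has unbounded expansion. (This is the direction we are told we will not even need.)

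\textbf{The main direction.} Suppose $\Xc$ is weakly sparse and of unbounded expansion. Apply \cref{thm:Dvorak} to obtain an integer $r \geq 0$ such that for every $d$, $\Xc$ contains the $r$-subdivision of a graph of average degree at least $d$. If $r \geq 1$ we are done, so assume $r = 0$; then $\Xc$ itself contains graphs of arbitrarily large average degree. Since $\Xc$ is weakly sparse, fix $t$ with $K_{t,t}$ not a subgraph of any graph in $\Xc$; then every graph in $\Xc$ is $K_{t,t}$-free. Let $d$ be arbitrary. Applying \cref{thm:KO} with this $d$ and $t$ yields an integer $D = D(d,t)$ such that every $K_{t,t}$-free graph of average degree at least $D$ contains $\dt{1}{F}$ as an induced subgraph for some $F$ of average degree at least $d$. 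Pick a graph $G \in \Xc$ of average degree at least $D$; it is $K_{t,t}$-free, so it contains $\dt{1}{F}$ as an induced subgraph for some $F$ of average degree at least $d$. Since $\Xc$ is hereditary (weakly sparse classes are hereditary by definition), $\dt{1}{F} \in \Xc$. As $d$ was arbitrary, $\Xc$ contains $1$-subdivisions of graphs of arbitrarily large average degree, so the conclusion holds with $r = 1$.

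\textbf{Main obstacle.} There is essentially no obstacle beyond the bookkeeping: the only subtlety is realizing that \cref{thm:Dvorak} on its own already allows $r = 0$, and that the role of \cref{thm:KO} together with weak sparsity is precisely to ``promote'' that degenerate case $r = 0$ to the genuine subdivision regime $r \geq 1$ by one application of \cref{thm:KO}. One should also note explicitly that $\dt{1}{F}$ being an \emph{induced} subgraph (as guaranteed by \cref{thm:KO}), combined with the class being hereditary, is what keeps $\dt{1}{F}$ inside $\Xc$; this is the one place where ``induced'' matters and where we use heredity rather than just monotonicity.
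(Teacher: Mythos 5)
Your argument is correct and follows essentially the same route as the paper: apply \cref{thm:Dvorak} to get some $r \geq 0$, and if $r = 0$, use \cref{thm:KO} together with weak sparsity to promote to $r = 1$. The additional remarks about picking $t$ explicitly and using heredity to keep $\dt{1}{F}$ in $\Xc$ are correct bookkeeping that the paper leaves implicit.
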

\begin{proof}
  The \emph{if} part follows directly from~\cref{thm:Dvorak}.
  For the \emph{only if} part, we first apply the corresponding implication in~\cref{thm:Dvorak}.
  If $r \geq 1$, we are done.
  If instead $r=0$, then $\Xc$ contains graphs of arbitrarily large average degree.
  Therefore, \cref{thm:KO} implies that $\Xc$ contains 1-subdivisions of graphs of arbitrarily large average degree, i.e., the sought statement holds for $r=1$.
\end{proof}

Equipped with this tool, the proof of \Cref{thm:small-ws-deg} shows the contrapositive as follows.
Assuming a~weakly sparse class $\Xc$ has unbounded expansion, we fix a positive integer $r$ such that $\Xc$ contains $r$-subdivisions of graphs of arbitrarily large average degree, which exists by \cref{cor:DvorakKO}.
Let $F$ be a graph with large average degree such that
the $r$-subdivision of $F$ is in $\Xc$.
First, we extract from $F$ a~subgraph $H$ with large minimum degree that contains a~bounded-degree spanning tree. This is possible due to \cref{obs:avg-min-dgn} and the following lemma.

\begin{lemma}[{\cite[Lemma 3.3]{BDSZZ23mono}}]\label{lem:minDegSubgraphs}
	Let $F$ be a~graph of minimum degree $d$.
	Then $F$ has an induced subgraph $H$ of minimum degree at~least~$d$ with a~spanning tree of \emph{maximum degree} at~most~$d$.
\end{lemma}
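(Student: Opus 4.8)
The plan is an extremal argument. First I would reduce to the case that $F$ is connected: every connected component of $F$ still has minimum degree at least $d$, and since the sought $H$ must be connected (it carries a spanning tree), it suffices to work inside one component. Now, among all connected induced subgraphs $H$ of $F$ with $\delta(H)\ge d$ — a nonempty family, as the component itself qualifies — fix one with $|V(H)|$ as small as possible, and then fix a spanning tree $T$ of $H$ minimizing the \emph{total excess} $\Phi(T):=\sum_{u\in V(H)}\max\{0,\deg_T(u)-d\}$. I claim $\Phi(T)=0$, i.e.\ $T$ has maximum degree at most $d$, which is exactly the conclusion. Suppose not: pick a vertex $v$ with $k:=\deg_T(v)>d$, root $T$ at $v$, and let $c_1,\dots,c_k$ be its children (so $k\ge d+1\ge 2$), with pendant subtrees $T_1,\dots,T_k$.

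\textbf{Case 1: no edge of $H$ joins two distinct subtrees.} Then $v$ is a cut-vertex and the components of $H-v$ are exactly $V(T_1),\dots,V(T_k)$; in particular every $x\in V(T_i)$ satisfies $N_H(x)\subseteq V(T_i)\cup\{v\}$. Since $N_H(v)$ is partitioned among the $k$ subtrees and $\deg_H(v)\ge \deg_T(v)=k$, the subtree $T_m$ meeting $N_H(v)$ in the fewest vertices meets it in at most $\lfloor \deg_H(v)/k\rfloor\le \deg_H(v)-d$ of them (the last inequality uses $k\ge d+1$ and $\deg_H(v)\ge k$). Deleting $V(T_m)$ then leaves a connected induced subgraph of $F$ on strictly fewer vertices in which $v$ retains at least $d$ neighbors and every other remaining vertex keeps all of its neighbors — hence of minimum degree at least $d$, contradicting the minimality of $|V(H)|$.

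\textbf{Case 2: some $xy\in E(H)$ has $x\in V(T_i)$, $y\in V(T_j)$ with $i\ne j$.} Consider the exchange $T':=T-vc_i+xy$, which is again a spanning tree of $H$ (removing $vc_i$ detaches $V(T_i)$, and $xy$ reattaches it). Only the degrees of $v,c_i,x,y$ change, by $-1,-1,+1,+1$ respectively (with $c_i$ unchanged if $x=c_i$), so
\[
\Phi(T')-\Phi(T)\ \le\ -1\ -\ \mathbf 1[\deg_T(c_i)>d]\ +\ \mathbf 1[\deg_T(x)\ge d]\ +\ \mathbf 1[\deg_T(y)\ge d].
\]
Hence $\Phi$ strictly decreases — contradicting the choice of $T$ — whenever the crossing edge can be chosen with $\deg_T(x)<d$ and $\deg_T(y)<d$ (and an easy check shows the exchange also strictly helps when $x=c_i$ is a child of $v$ and $\deg_T(y)<d$).

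The main obstacle is to close Case 2 in full, i.e.\ to rule out a configuration in which every crossing edge is ``bad'' in the above sense while Case 1 also fails. The leverage is that $T$ has only $|V(H)|-1$ edges, so the vast majority of its vertices are \emph{light} (tree-degree $<d$); concretely, each $T_i$ contains a leaf of $T$, which has tree-degree $1$ but $H$-degree at least $d$ and hence carries at least $d-1$ non-tree edges. Classifying where these non-tree edges go — inside their own subtree, to $v$, or crossing to another subtree — and iterating the exchange should force either a strict decrease of $\Phi$ or the cut-vertex configuration of Case 1; turning this bookkeeping into a clean terminating argument (or, alternatively, merging $|V(H)|$ and $\Phi(T)$ into a single well-chosen lexicographic/potential objective) is the technical heart of the proof.
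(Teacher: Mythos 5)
Your setup (reduce to a component, take a vertex-minimal connected induced subgraph $H$ with $\delta(H)\ge d$, then a spanning tree $T$ minimizing the excess $\Phi$) is a reasonable plan, and the parts you carry out are sound: the reduction to the connected case is fine, the $\Phi$-accounting for the swap is correct, and Case~1 works — the averaging bound $\lfloor \deg_H(v)/k\rfloor\le \deg_H(v)-d$ does follow from $k\ge d+1$ and $\deg_H(v)\ge k$, and deleting the poorest child subtree contradicts the minimality of $|V(H)|$. But the argument is not a proof, and you say so yourself: Case~2 is left open, and that is precisely where the difficulty of the lemma sits. Minimality of $\Phi$ only excludes \emph{strictly} improving swaps, so the stuck configuration you must rule out is: some vertex $v$ has $\deg_T(v)>d$, there are edges between distinct child subtrees of $v$, but every such crossing edge has an endpoint of tree-degree at least $d$. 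Nothing in the proposal derives a contradiction from this. The leaf-counting leverage does not suffice as stated, since the $\ge d-1$ non-tree edges at a leaf of $T_i$ may all go to $v$, stay inside $T_i$, or attach to heavy vertices of other subtrees. And ``iterating the exchange'' is exactly the delicate point: a swap that removes one unit of excess at $v$ while creating one at $x$ (with $\deg_T(x)=d$) is $\Phi$-neutral, so excess can be pushed around indefinitely; without a genuinely new well-founded potential (or a global use of the minimality of $H$, not just at the single vertex $v$), such cascades can cycle. This is the same obstruction that limits local-search arguments for degree-bounded spanning trees, where swap arguments of this kind only yield maximum degree one more than optimal; here you need the exact bound $d$, so the minimality of $H$ must enter Case~2 as well, and your sketch does not show how.

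For calibration: the paper does not prove this lemma at all — it imports it verbatim from \cite{BDSZZ23mono} — so there is no in-paper argument to compare your route against. Judged on its own, the proposal is a promising but incomplete attack: Case~1 is correct and the extremal framework is plausible, but the ``technical heart'' you defer (closing Case~2, or designing a single potential merging $|V(H)|$ and the tree objective so that every stuck configuration contradicts smallness of $H$) is the actual content of the lemma and is missing.
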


\noindent
Next, we use the properties of $H$ to show that it contains many (not necessarily induced) subgraphs each with a~small number of automorphisms, which are translated to many \emph{induced} subgraphs with a~small number of automorphisms in the $r$-subdivision of $H$ and thus in the $r$-subdivision of the original graph $F$. This allows us to lower bound the number of labeled graphs on the same number of vertices in $\Xc$ by counting subgraphs of $H$, and finally conclude that $\Xc$ is not small.
The quantitative connection is made possible by the following observations.

\begin{observation}\label{obs:sub_iso}
	For any integer $r \geq 0$, two graphs $G$ and $H$ are isomorphic if and only if  $\dt{r}{G}$ and $\dt{r}{H}$ are isomorphic.
\end{observation}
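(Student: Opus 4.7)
\bigskip

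The forward direction is immediate and fully constructive: given a graph isomorphism $\phi : G \to H$, the natural extension $\Phi$ that maps each branching vertex $v \in V(G)$ to $\phi(v)$, and each subdivision path $u - w_1 - \cdots - w_r - v$ replacing the edge $uv \in E(G)$ onto the subdivision path $\phi(u) - w'_1 - \cdots - w'_r - \phi(v)$ replacing $\phi(u)\phi(v) \in E(H)$ in order, is a bijection on vertex sets that preserves the edge relation by construction. So the real content is the converse, and the case $r=0$ is vacuous. I would thus focus on showing that for $r \geq 1$, the graph $G$ can be recovered from $\dt{r}{G}$ up to isomorphism, which entails that $\dt{r}{G} \cong \dt{r}{H}$ forces $G \cong H$.

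My plan is to canonically identify a set $B \subseteq V(\dt{r}{G})$ of ``branching vertices'' from which $G$ can be read off by recording, for every pair $u,v \in B$, whether $u$ and $v$ are the endpoints of a subdivision path of length $r+1$ whose internal $r$ vertices have degree $2$ and lie outside $B$. In the generic case where no vertex of $G$ has degree $2$, this is easy: the branching vertices are exactly the vertices of $\dt{r}{G}$ of degree $\neq 2$, and every other vertex is a subdivision vertex; edges of $G$ are then recovered as maximal degree-$2$ paths joining two such branching vertices. The combined isomorphism type of the pair (branching set, ambient graph) recovers $G$.

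The real obstacle is handling vertices of degree~$2$ in $G$, because such a branching vertex looks indistinguishable from a subdivision vertex both locally and, as a $2$-regular example such as $\dt{r}{C_n} \cong C_{n(r+1)}$ shows, even globally. To deal with this, I would consider the subgraph $D$ of $\dt{r}{G}$ induced by the vertices of degree at most $2$: it decomposes into paths (whose endpoints are attached to vertices of degree $\geq 3$) and connected components that are cycles (corresponding to $2$-regular components of $G$). On each such path or cycle, the branching vertices must be at positions that are multiples of $r+1$ from a fixed endpoint, so the choice of branching vertices along it is unique up to a cyclic/reflective shift. Any two such valid choices are related by a graph automorphism of the path/cycle and hence yield the same $G$ up to isomorphism, which closes the argument. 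The bookkeeping across components, together with the fact that degrees of branching vertices are preserved and that isolated vertices of $G$ remain isolated in $\dt{r}{G}$, ensures that the reconstruction is complete and unambiguous.
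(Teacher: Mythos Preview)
Your argument is correct and considerably more detailed than what the paper provides: the paper states this as an \emph{Observation} without proof, treating it as folklore. Your reconstruction strategy---identify the branching vertices as those of degree $\neq 2$, then handle the degree-$2$ locus by decomposing it into paths and cycles and placing branching vertices at the correct periodic offsets---is sound and is the natural way to make the observation rigorous.

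One small imprecision worth tightening: on a path component of your subgraph $D$, the branching vertices are not always at positions that are multiples of $r+1$ from an endpoint; the offset depends on whether the endpoint is a degree-$1$ branching vertex (offset $0$) or a subdivision vertex adjacent to a degree-$\geq 3$ branching vertex outside $D$ (offset $r$). However, the endpoint type is determined by its degree in $\dt{r}{G}$, so the placement is in fact \emph{unique} on paths (not merely unique up to shift), and your conclusion stands. On cycle components the cyclic ambiguity is genuine, but as you note, all choices yield isomorphic reconstructions since the cycle length determines the corresponding $C_m$ via $m = \ell/(r+1)$.
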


\begin{observation}\label{lem:sg2isg}
Let $F$ be a~graph.
For any integer $r > 0$, for every subgraph $H$ of $F$, the graph $\dt{r}{H}$ is isomorphic to an induced subgraph of $\dt{r}{F}$.
\end{observation}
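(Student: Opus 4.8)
The plan is to exhibit an explicit vertex subset $S$ of $\dt{r}{F}$ and verify that the subgraph of $\dt{r}{F}$ induced on $S$ is isomorphic to $\dt{r}{H}$. Recall that, by construction, $V(\dt{r}{F}) = V(F) \cup \bigcup_{uv \in E(F)} V(P_{uv})$, that the subdivision paths $P_{uv}$ are pairwise vertex-disjoint and meet the branching vertices only at their own two endpoints, and that every edge of $\dt{r}{F}$ lies on exactly one of the length-$(r+1)$ paths $u P_{uv} v$. Since $V(H) \subseteq V(F)$ and $E(H) \subseteq E(F)$, the set
\[
  S \;:=\; V(H) \,\cup\, \bigcup_{uv \in E(H)} V(P_{uv})
\]
is a well-defined subset of $V(\dt{r}{F})$, and I would show $\dt{r}{F}[S] \iso \dt{r}{H}$, with the path $P_{uv}$ of $\dt{r}{F}$ playing the role of the path $P_{uv}$ of $\dt{r}{H}$ for each $uv \in E(H)$.

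First I would check the easy inclusion: every edge of $\dt{r}{H}$ lies on a path $u P_{uv} v$ with $uv \in E(H)$; that path appears verbatim in $\dt{r}{F}$ (as $uv \in E(F)$) and all of its vertices belong to $S$, so the edge survives in $\dt{r}{F}[S]$. Hence $\dt{r}{H}$ is a spanning subgraph of $\dt{r}{F}[S]$. For the reverse inclusion I would take any edge $e$ of $\dt{r}{F}$ with both endpoints in $S$ and locate the unique path $u P_{uv} v$, $uv \in E(F)$, on which it lies. Because $r > 0$, the edge $e$ is incident to at least one subdivision vertex of $P_{uv}$; if $uv \notin E(H)$ then $V(P_{uv}) \cap S = \emptyset$, contradicting that both endpoints of $e$ lie in $S$. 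So $uv \in E(H)$ and $e$ is an edge of $\dt{r}{H}$. Thus $\dt{r}{F}[S]$ has exactly the edges of $\dt{r}{H}$, and the two graphs coincide.

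The one point requiring care — and, really, the only place where the hypothesis $r > 0$ is used — is ruling out spurious edges: two branching vertices $u, v \in V(H)$ with $uv \in E(F) \setminus E(H)$ are adjacent in $F$, but in $\dt{r}{F}$ they are joined only through the internal vertices of $P_{uv}$, none of which lie in $S$, and there is no direct edge $uv$ in $\dt{r}{F}$ precisely because $r > 0$. (For $r = 0$ the statement genuinely fails, since a subgraph need not be an induced subgraph.) Beyond this bookkeeping I do not anticipate any obstacle; the argument is a direct verification from the definition of the $r$-subdivision.
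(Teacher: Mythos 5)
Your proof is correct. The paper states \cref{lem:sg2isg} as an observation without proof, and your argument — taking $S = V(H) \cup \bigcup_{uv \in E(H)} V(P_{uv})$, checking that $\dt{r}{F}[S]$ contains all edges of $\dt{r}{H}$, and then using $r>0$ to rule out spurious edges between branching vertices $u,v$ with $uv \in E(F) \setminus E(H)$ — is precisely the direct verification the authors intended the reader to supply. You also correctly identify why $r>0$ is needed and where the claim fails for $r=0$.
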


\begin{observation}\label{obs:autosubdivide}
  Let $G$ be a~connected graph containing a~vertex of degree at least $3$, then \[ \aut(\dt{r}{G}) = \aut(G).\]
\end{observation}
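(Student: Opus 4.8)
The plan is to set up an explicit bijection between $\mathrm{Aut}(G)$ and $\mathrm{Aut}(\dt{r}{G})$. In one direction, every automorphism $\varphi$ of $G$ naturally extends to an automorphism $\widehat\varphi$ of $\dt{r}{G}$: fix $\varphi$ on the branching vertices, and for each edge $uv \in E(G)$, map the internal path $P_{uv}$ onto $P_{\varphi(u)\varphi(v)}$ by the unique path isomorphism sending the neighbor of $u$ to the neighbor of $\varphi(u)$. (If $\varphi$ swaps $u$ and $v$, this reverses $P_{uv}$; one should note that $P_{uv}$ and $P_{vu}$ denote the same path, so the map is well-defined either way.) This assignment is clearly injective and a group homomorphism, so $\aut(G) \le \aut(\dt{r}{G})$.

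For the reverse inequality I would argue that every automorphism $\psi$ of $\dt{r}{G}$ arises this way, which amounts to showing $\psi$ preserves the set of branching vertices. This is where the hypotheses enter. When $r \ge 1$, branching vertices and subdivision vertices can sometimes be distinguished by degree — a subdivision vertex has degree $2$ in $\dt{r}{G}$ — but a branching vertex of degree $2$ in $G$ would also have degree $2$ after subdivision, so degree alone is not enough; this is precisely why we need a vertex of degree $\ge 3$ together with connectivity. The key structural fact is: in $\dt{r}{G}$ with $r \ge 1$, a vertex is a branching vertex if and only if it lies on a path (in $\dt{r}{G}$) from some vertex of degree $\ge 3$, all of whose internal vertices have degree $2$, whose length is a positive multiple of $r+1$ — equivalently, the branching vertices are exactly those at distance a multiple of $r+1$ from a fixed degree-$\ge 3$ vertex, reachable by a "straight" path. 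I would make this precise by noting that the branching vertices of degree $\ge 3$ are detected by degree, hence fixed setwise by $\psi$; and then every other branching vertex lies on a unique subdivided edge between two branching vertices, at least one endpoint of which can be reached, and the subdivision vertices on that edge are the unique degree-$2$ vertices strictly between consecutive branching vertices at graph-distance exactly $r+1$. Since there is at least one degree-$\ge 3$ branching vertex $w_0$ (using the hypothesis) and $G$ is connected (so $\dt{r}{G}$ is connected), a BFS-type argument propagates the branching/subdivision distinction along the whole graph: $\psi$ must send branching vertices to branching vertices. The special case $r = 0$ is trivial since $\dt{0}{G} = G$.

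Once $\psi$ preserves branching vertices, its restriction to them is an automorphism $\varphi$ of $G$ (adjacency in $G$ corresponds to being joined by a subdivided path of length $r+1$ in $\dt{r}{G}$, which $\psi$ preserves), and $\psi$ is forced to act on each internal path $P_{uv}$ as the unique path isomorphism onto $P_{\varphi(u)\varphi(v)}$ — so $\psi = \widehat\varphi$. This shows the extension map $\varphi \mapsto \widehat\varphi$ is surjective, completing the bijection and giving $\aut(\dt{r}{G}) = \aut(G)$.

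The main obstacle I anticipate is the combinatorial bookkeeping in showing $\psi$ preserves branching vertices: one has to rule out that $\psi$ maps a degree-$2$ branching vertex to a subdivision vertex (or vice versa), and the cleanest way is to phrase the characterization of branching vertices purely in terms of distances to the (degree-detectable) set of degree-$\ge 3$ branching vertices and then invoke connectivity. Care is also needed with small cases — e.g. $G$ a single vertex of degree $\ge 3$ is impossible, and one should confirm the argument degrades gracefully when $G$ has exactly one edge incident to the high-degree vertex, etc. — but these are routine.
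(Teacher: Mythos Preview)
Your approach is essentially the paper's: extend automorphisms of $G$ to $\dt{r}{G}$, then show every automorphism $\psi$ of $\dt{r}{G}$ preserves the set of branching vertices so that its restriction lies in $\mathrm{Aut}(G)$. The ``main obstacle'' you anticipate is simpler than you make it: in $\dt{r}{G}$ the branching vertices are \emph{exactly} the vertices at distance $\equiv 0 \pmod{r+1}$ from any fixed branching vertex $u$ (no ``straight path'' qualification is needed, since every shortest path to a subdivision vertex must enter its subdivided edge through one endpoint, giving distance $\not\equiv 0 \pmod{r+1}$); since $\psi$ sends the degree-$\ge 3$ vertex $u$ to another branching vertex $v$ and preserves distances, it sends branching vertices to branching vertices --- this is the paper's argument and it replaces your BFS bookkeeping in one line.
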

\begin{proof}
We start by showing that any automorphism $\varphi$ of $\dt{r}{G}$ maps branching vertices to branching vertices and subdivision vertices to subdivision vertices. First, note that as $G$ has a~vertex of degree at least $3$, the graph $\dt{r}{G}$ also has a~vertex of degree at least $3$ (hence a~branching vertex) say~$u$, which must be mapped by~$\varphi$ to a~vertex of degree at least $3$ (hence, to a~branching vertex), say~$v$.
Now, since $G$ is connected, the branching vertices are exactly the ones that are at distances $0~(\text{mod}~r+1)$ from $u$, and they are mapped to vertices that are at distances $0~(\text{mod}~r+1)$ from~$v$, and thus, to branching vertices.
Hence, the restriction $\tilde{\varphi}$ of $\varphi$ to the branching vertices of $\dt{r}{G}$ is a~permutation of the vertices of $G$.
We claim that $\tilde{\varphi}$ is an automorphism of $G$.
Indeed, two vertices $a$ and $b$ are adjacent in $G$ 
if and only if
$a$ and $b$ are at distance $r+1$ in $\dt{r}{G}$
if and only if 
$\varphi(a)$  and $\varphi(b)$ are at distance $r+1$ in $\dt{r}{G}$
if and only if 
$\tilde{\varphi}(a)=\varphi(a)$ and $\tilde{\varphi}(b)=\varphi(b)$ are adjacent in~$G$.

From this, we deduce that $\varphi \mapsto \tilde{\varphi}$ is a~bijection between the automorphisms of $\dt{r}{G}$ and the automorphisms of $G$: it is injective since an automorphism of $\dt{r}{G}$ is uniquely determined by the images of the branching vertices, and surjective since there are at least as many automorphisms of $\dt{r}{G}$ as there are of $G$.  Hence, $\aut(\dt{r}{G}) = \aut(G)$.
\end{proof}

Before proceeding to our main result, we provide some basic facts about automorphisms, which we will rely on.
Given two graphs $F$ and $G$, we denote by $\sub(F \rightarrow G)$ the number of subgraphs of $G$ isomorphic to $F$, and by $\emb(F \rightarrow G)$ the number of embeddings of $F$ into $G$, i.e.,
the number of injective homomorphisms from $F$ to $G$. Thus
\[\emb(F \rightarrow G) = \sub(F \rightarrow G)\cdot\aut(F).\]
For instance, if $G$ is the cycle on an even number $n$ of vertices, and $F$ is the disjoint union of $n/2$ edges, then $\sub(F \rightarrow G) = 2$ and $\aut(F)=(n/2)! \cdot 2^{\frac{n}{2}}$, so $\emb(F \rightarrow G) = (n/2)! \cdot 2^{\frac{n}{2}+1}$.

We will use the following two well known facts (see~\cite{KLT02}).

\begin{lemma}\label{lem:autGspanF}
Let $F$ be a~spanning subgraph of a~graph $G$. Then 
\[
	\aut(G) \leq \emb(F \rightarrow G) = \sub(F \rightarrow G) \cdot\aut(F).
\]
\end{lemma}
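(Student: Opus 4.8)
\textbf{Proof plan for \cref{lem:autGspanF}.}

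The plan is to establish the two parts of the statement separately, starting with the easier identity. First I would justify the equality $\emb(F \rightarrow G) = \sub(F \rightarrow G) \cdot \aut(F)$, which has essentially already been recorded in the paragraph preceding the lemma: every embedding of $F$ into $G$ has as its image a~subgraph of $G$ isomorphic to $F$, and conversely each such subgraph $F'$ arises from exactly $\aut(F)$ distinct embeddings, since fixing an isomorphism $F \to F'$ and then post-composing with automorphisms of $F$ (equivalently, pre-composing with automorphisms of $F'$) enumerates all of them without repetition. Summing over the $\sub(F \rightarrow G)$ isomorphism types of subgraph gives the claimed product.

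For the inequality $\aut(G) \le \emb(F \rightarrow G)$, the key observation is that $F$ is a~\emph{spanning} subgraph of $G$, so $V(F) = V(G)$ and the identity map $\iota \colon V(F) \to V(G)$ is an embedding of $F$ into $G$ (it is injective, and since $F$ is a~subgraph of $G$ every edge of $F$ maps to an edge of $G$). The plan is then to exhibit an injection from $\text{Aut}(G)$ into the set of embeddings of $F$ into $G$: to an automorphism $\varphi \in \text{Aut}(G)$ we associate the map $\varphi \circ \iota$, viewed as a~map from $V(F)$ to $V(G)$. This composite is injective because $\varphi$ is a~bijection, and it sends edges of $F$ to edges of $G$ because $\iota$ does (as $F \subseteq G$) and $\varphi$ preserves edges of $G$; hence $\varphi \circ \iota$ is indeed an embedding of $F$ into $G$. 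The assignment $\varphi \mapsto \varphi \circ \iota$ is injective since $\iota$ is surjective onto $V(G)$, so $\varphi$ is recoverable from $\varphi \circ \iota$. Therefore $\aut(G) = |\text{Aut}(G)| \le \emb(F \rightarrow G)$, and combining with the identity from the first paragraph yields the full chain $\aut(G) \le \emb(F \rightarrow G) = \sub(F \rightarrow G) \cdot \aut(F)$.

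There is essentially no hard step here: the only point that needs a~moment of care is that the map $\varphi \mapsto \varphi \circ \iota$ may fail to be onto the set of all embeddings (an embedding of a~proper spanning subgraph need not extend to an automorphism of $G$), which is exactly why we get an inequality rather than an equality — and why the lemma is stated as an inequality. I would make sure the write-up does not overclaim a~bijection. One could alternatively phrase the argument via the chain of identities $\aut(G) = \sub(G \to G)\cdot\aut(G)/\!\aut(G)$-style bookkeeping, but the direct injection is cleanest and matches the level of detail in the surrounding text, so that is the route I would take.
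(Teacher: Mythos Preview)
Your argument is correct. The equality $\emb(F \rightarrow G) = \sub(F \rightarrow G)\cdot\aut(F)$ is exactly the bookkeeping already recorded before the lemma, and your injection $\varphi \mapsto \varphi\circ\iota$ from $\mathrm{Aut}(G)$ into the set of embeddings of $F$ in $G$ is the standard proof of the inequality; your remark that this need not be surjective (whence only an inequality) is also on point.

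As for comparison with the paper: there is nothing to compare against, since the paper does not prove this lemma at all. It is stated as one of two ``well known facts (see~\cite{KLT02})'' and used without proof. Your write-up is thus more detailed than what the paper provides, and the approach you give is the natural one.
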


\begin{lemma}\label{lem:autGdeg}
Let $G$ be a~connected graph of maximum degree $\Delta$.
Then 
\[
\aut(G) \leq n \cdot \Delta! \cdot (\Delta-1)^{n-\Delta-1} \leq n \Delta^n.
\]
\end{lemma}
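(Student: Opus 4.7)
The plan is to establish the first (main) inequality via a BFS-based counting argument: any automorphism $\varphi$ of a connected graph is determined by its action on vertices in the order they are discovered in a breadth-first search from a fixed root, and at each step the number of valid images is controlled by the degree bound. The second inequality is purely arithmetic.

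More concretely, I would start by picking a vertex $v_0 \in V(G)$ of degree exactly $\Delta$, which exists since $\Delta$ is the maximum degree of $G$. The first factor of $n$ comes from the at most $n$ choices for $\varphi(v_0)$. Next, once $\varphi(v_0)$ is fixed, the restriction of $\varphi$ to $N(v_0)$ must be a bijection from $N(v_0)$ onto $N(\varphi(v_0))$, yielding at most $\Delta!$ extensions and giving the factor $\Delta!$.

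I would then process the remaining $n - \Delta - 1$ vertices of $G$ in BFS order from $v_0$. When visiting an unprocessed vertex $u$ at BFS-depth at least $2$, pick its BFS parent $p(u)$; by construction $\varphi(p(u))$ is already fixed, and $\varphi(u)$ must lie in $N(\varphi(p(u)))$. Moreover $p(u)$ itself has a BFS parent $q$ with $\varphi(q)$ already fixed, so $\varphi(q) \in N(\varphi(p(u)))$ is a neighbor that is forbidden as an image of $u$. Hence there are at most $\Delta - 1$ choices for $\varphi(u)$, and multiplying over all $n - \Delta - 1$ such vertices yields the factor $(\Delta-1)^{n-\Delta-1}$. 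Combining, $\aut(G) \leqslant n \cdot \Delta! \cdot (\Delta-1)^{n-\Delta-1}$.

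For the second inequality, bounding $\Delta! \leqslant \Delta^{\Delta}$ and $(\Delta-1)^{n-\Delta-1} \leqslant \Delta^{n-\Delta-1}$ gives $\Delta! \cdot (\Delta-1)^{n-\Delta-1} \leqslant \Delta^{n-1} \leqslant \Delta^n$, as required. The main obstacle is the off-by-one bookkeeping in the BFS step: one must argue cleanly that a depth-$\geqslant 2$ vertex always has a \emph{distinct} forbidden image among the neighbors of its parent's image, so that the bound is $\Delta-1$ rather than $\Delta$. Handling the depth-$1$ vertices separately (via the $\Delta!$ factor) is what makes the exponent of $\Delta-1$ come out to exactly $n-\Delta-1$, and one should also check the degenerate cases $\Delta \in \{0,1\}$ or $n=\Delta+1$ (where the exponent is $0$ and the bound trivially holds with the convention $0^0=1$).
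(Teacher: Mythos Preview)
The paper does not actually prove this lemma; it is stated as a well-known fact with a reference to~\cite{KLT02}, so there is no in-paper argument to compare against. Your BFS-based counting argument is the standard proof and is correct as outlined: fixing the image of a vertex of degree~$\Delta$ (at most $n$ choices), then the images of its $\Delta$ neighbors (at most $\Delta!$ choices), and then observing that each remaining vertex~$u$ at BFS-depth $\geqslant 2$ has its image constrained to $N(\varphi(p(u)))\setminus\{\varphi(q)\}$, where $q$ is the BFS-grandparent, is exactly the right bookkeeping to get the factor $(\Delta-1)^{n-\Delta-1}$. The arithmetic for the second inequality and the handling of the degenerate cases are also fine.
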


We apply the above two facts to upper bound the number of automorphisms of a~graph containing a~bounded-degree spanning tree in terms its average degree.

\begin{lemma}\label{lem:autTspanG}
Let $d\geq 1$ and $G$ be a~connected $n$-vertex graph with at most $dn$ edges, which has a~spanning tree $T$ of maximum degree $\tau$. Then $\aut(G) \leq   (6\tau d)^{n}$.
\end{lemma}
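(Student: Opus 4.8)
The plan is to combine Lemma~\ref{lem:autGspanF} with the spanning tree $T$ as the chosen spanning subgraph $F$, so that
\[
\aut(G) \le \sub(T \to G)\cdot \aut(T).
\]
This reduces the task to two separate estimates: bounding $\aut(T)$, and bounding the number of copies of $T$ in $G$. For the first, since $T$ is a connected graph (a tree) of maximum degree $\tau$ on $n$ vertices, Lemma~\ref{lem:autGdeg} gives $\aut(T) \le n\cdot \tau^{n}$.

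The main work is bounding $\sub(T \to G)$, the number of spanning trees of $G$ isomorphic to $T$; a crude bound $\sub(T\to G) \le \binom{|E(G)|}{n-1}$ would already suffice if we are not too greedy, since $|E(G)| \le dn$ and $\binom{dn}{n-1} \le \binom{dn}{n} \le (ed)^{n}$ by the standard inequality $\binom{a}{b}\le (ea/b)^b$. Indeed any subgraph of $G$ isomorphic to $T$ is a choice of $n-1$ edges from the at most $dn$ edges of $G$. Putting the pieces together yields
\[
\aut(G) \;\le\; \binom{dn}{n-1}\cdot n\cdot \tau^{n} \;\le\; (ed)^{n}\cdot n\cdot \tau^{n}.
\]
It remains to absorb the stray factor of $n$ and to reach the clean bound $(6\tau d)^{n}$. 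Since $d \ge 1$ and $\tau \ge 1$, we have $e < 6$, and the factor $n$ is at most, say, $2^{n} \le 6^{n}$ — but that overspends the budget. A cleaner route is to note $n \le (6/(e\cdot 1))^{n}$ fails for small $n$ only in a bounded range, where the statement can be checked directly (or one observes $n\le 2^n$ and that $e\cdot 2 < 6$, so $(ed)^n\cdot n \cdot \tau^n \le (2e\, d\tau)^n \le (6d\tau)^n$). I would use the latter: $n \le 2^{n}$ for all $n \ge 1$, hence $\aut(G)\le (2e\,d\tau)^n \le (6\tau d)^n$ since $2e < 6$.

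The only subtlety I anticipate is making sure all the hypotheses of the cited lemmas are genuinely met: Lemma~\ref{lem:autGdeg} requires $G$ (here applied to $T$) to be connected, which holds because $T$ is a spanning \emph{tree}; and Lemma~\ref{lem:autGspanF} requires $F = T$ to be spanning, which is given. One should also double-check the edge-count bookkeeping: $|E(G)| \le dn$ gives $\binom{|E(G)|}{n-1} \le \binom{dn}{n-1}$ only if $dn \ge n-1$, which is immediate since $d \ge 1$. No step requires $G$ itself to have bounded degree, which is important — only $T$ does — so the argument goes through at the stated level of generality.
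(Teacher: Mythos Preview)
Your approach is exactly the paper's: apply Lemma~\ref{lem:autGspanF} with $F=T$, bound $\aut(T)$ via Lemma~\ref{lem:autGdeg}, bound $\sub(T\to G)$ by $\binom{dn}{n-1}$, and finish with $2e<6$. There is one small slip: the intermediate step $\binom{dn}{n-1}\le\binom{dn}{n}$ is false when $1\le d<2$ (for $d=1$ it reads $n\le 1$). The paper avoids this by bounding $\binom{dn}{n-1}\le\bigl(\tfrac{edn}{n-1}\bigr)^{n-1}=\bigl(1+\tfrac{1}{n-1}\bigr)^{n-1}(ed)^{n-1}\le e\,(ed)^{n-1}\le(ed)^n$ directly, and absorbs the stray $n$ into the tree bound up front via $n\le 2^n$, giving $\aut(T)\le(2\tau)^n$; the product $(ed)^n(2\tau)^n\le(6\tau d)^n$ is then immediate. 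Your route with $n\le 2^n$ applied at the end works equally well once the binomial estimate is fixed.
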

\begin{proof}
By \Cref{lem:autGdeg}, $\aut(T) \leq n  \tau^n\leq (2\tau)^n$.
Moreover, $\sub(T \rightarrow G)$ is at most the number of ways of choosing $n-1$ edges among $E(G)$. Hence
\[
	\sub(T \rightarrow G) \leqslant \binom{dn}{n-1} \leqslant
	\left( \frac{edn}{n-1} \right)^{n-1} = \left(1+\frac{1}{n-1}\right)^{n-1} \cdot  (ed)^{n-1}   \leq e  (ed)^{n-1} \leq (ed)^{n},
\]
where in the penultimate inequality we used the fact that $1+x \leqslant e^x$ for any real $x$.
Hence, by \Cref{lem:autGspanF}, we have $\aut(G) \leqslant \sub(T \rightarrow G)\cdot \aut(T) \leqslant (ed)^n \cdot (2\tau)^n \leqslant (6\tau d)^{n}$.
\end{proof}

We are now ready to prove the main result of this section.

\smallweakdgen*
\begin{proof}
For the sake of contradiction, suppose that a~weakly sparse graph class~$\Xc$ has unbounded expansion, but there exists a~constant~$c$ such that for every $n \in \Nn$ the number of labeled $n$-vertex graphs in~$\Xc$ is at most $n! \cdot c^n$.
Let $r \geq 1$ be an integer such that $\Xc$ contains $r$-subdivisions of graphs of arbitrary large average degree, which exists by~\Cref{cor:DvorakKO}.

Let $\delta := 24$, $d := \lceil \max (\delta^2, \delta c^{2r}) \rceil$. 
Let $F$ be a graph of average degree at least $4d$, such that $\dt{r}{F}$ is in~$\Xc$, and
let $F'$ be a~subgraph of $F$ of minimum degree at least $2d$, which exists by \cref{obs:avg-min-dgn}.
By \Cref{lem:minDegSubgraphs}, $F'$ contains a~subgraph $H'$ of minimum degree at least $2d$ with a~spanning tree $T$ of maximum degree $2d$.
If $H'$ has more than $d|V(H')|$ edges, we remove all but $d|V(H')| - (|V(H')| - 1)$
of them from the set $E(H') \setminus E(T)$ to obtain a~spanning subgraph with exactly $d|V(H')|$ edges that still contains $T$ as a~spanning tree.
We denote this subgraph by $H$ and let $k := |V(H)|$, and thus $|E(H)| = dk$.

Let $\mathcal{H}$ be the set of spanning subgraphs of $H$ that contain $T$ as a~subgraph and have exactly $\delta k$  edges. Then, since $\delta \leq d$, we have
\begin{equation*}\label{eq:boundonH}
|\mathcal{H}| =
\binom{dk - (k-1)}{ \delta k - (k-1)}   \geqslant \binom{dk - k}{ \delta k - k} \geqslant \left( \frac{d-1}{\delta-1} \right)^{(\delta-1)k}\geqslant \left( \frac{d}{\delta} \right)^{(\delta-1)k}.
\end{equation*}

Since every graph in $\Hc$ has $\delta k \leq dk$
edges and contains a~spanning tree of maximum degree $2d$, from
\cref{lem:autTspanG}, 
\begin{equation}\label{eq:autsofA}
	\max\limits_{A \in \mathcal{H}} \aut(A) \leq (6\cdot 2d\cdot d)^k \leq  (4d)^{2k}. 
\end{equation}  
Thus, denoting by $\Uc$ the set of pairwise non-isomorphic graphs in $\Hc$, we obtain
\begin{equation}\label{eq:sizelower}
	|\mathcal{U}| \geqslant \frac{|\mathcal{H}|}{\max\limits_{A \in \mathcal{H}} \aut(A)} 
	\geqslant
	\left( \frac{d}{\delta} \right)^{(\delta-1)k}  \cdot \frac{1}{(4   d)^{2k}}  
	= \left( \frac{d}{\delta} \right)^{(\delta+1)k}  \cdot \frac{\delta^{2k}}{4^{2k} \cdot d^{4k}} 
	=
	 \left( \frac{d}{\delta} \right)^{(\delta+1)k}  \cdot \frac{6^{2k}}{d^{4k}}  
\end{equation}

Let $\dt{r}{\Uc}$ be the set of $r$-subdivisions of graphs in $\mathcal{U}$,
and observe that every graph in $\dt{r}{\Uc}$ has exactly $N := (r\delta + 1)k$ vertices. 
Since every graph in $\Uc$ is a~subgraph of $F$, and $\dt{r}{F}$ is in $\Xc$, \cref{lem:sg2isg} implies that every graph in $\dt{r}{\Uc}$ is in $\Xc$.
Thus, to contradict the assumption that, for every $n \in \Nn$, class $\Xc$ has at most $n! \cdot c^n$ labeled $n$-vertex graphs,
it is enough to show that $|\lab{\dt{r}{\mathcal{U}}}| > c ^{N}\cdot N!$.
This is what we do in the remainder of the proof.

It follows from \cref{obs:sub_iso} that
$\dt{r}{\Uc}$ is a~set of pairwise non-isomorphic graphs and $|\dt{r}{\Uc}|=|\Uc|$.
Furthermore, since every graph $A$ in $\Hc$ is connected and has $k$ vertices and $\delta k \geq k+1$ edges, it contains a~vertex of degree at least 3 and thus, by \cref{obs:autosubdivide}, $\aut(\dt{r}{A}) = \aut(A)$. Therefore, from \cref{eq:lab-graphs}, we have 
\begin{equation*}
	|\lab{\dt{r}{\mathcal{U}}}| \geq 
	|\dt{r}{\mathcal{U}}| \cdot \frac{N!}{\max\limits_{A \in \mathcal{H}} \aut(\dt{r}{A})} = |\mathcal{U}| \cdot \frac{N!}{\max\limits_{A \in \mathcal{H}} \aut(A)},
\end{equation*}
and thus, from \eqref{eq:autsofA} and \eqref{eq:sizelower}, and the fact that $N/r = (\delta+1/r)k \leqslant (\delta+1)k$, we obtain
 \begin{equation*}
 	|\lab{\dt{r}{\mathcal{U}}}|
 	\geq  \left(\frac{d}{\delta} \right)^{N/r} \cdot \frac{6^{2k}}{d^{4k}} \cdot \frac{N!}{(4d)^{2k}}
 	> \left(\frac{d}{\delta} \right)^{N/r} \cdot \frac{N!}{ d^{6k}},
 \end{equation*}

Finally, since $\frac{N}{4r} = \frac{(r \delta + 1)k}{4r}  > \frac{\delta k}{4} = 6k $, and
$d$ satisfies both $\sqrt{d} \geq \delta $ and $\left( d/\delta \right)^{1/r} \geq c^2$, we have
\[
	|\lab{\dt{r}{\mathcal{U}}}| 
	>
	\left(\frac{d}{\delta} \right)^{\frac{N}{2r}} \cdot \left(\frac{d}{\delta} \right)^{\frac{N}{2r}}  \cdot \frac{N!}{ d^{6k}} 
	\geq \left(\frac{d}{\delta} \right)^{\frac{N}{2r}}\cdot d^{\frac{N}{4r}}  \cdot \frac{N!}{ d^{6k}}
	> \left( \left(\frac{d}{\delta} \right)^{1/r} \right)^{\frac{N}{2}}\cdot N!
	\geq  c ^{N}\cdot N!,
\]
which completes the proof.
\end{proof}

\section{Short adjacency labels for small classes}
\label{sec:short-small}

In this section we prove:

\main*

To do so, we first show a~result of independent interest (in \cref{sec:neighborhood-complexity}), that the neighborhood complexity of any small class is at most $O(n \log n)$. Then, we prove in \cref{sec:neigh-complexity-and-contiguity} that such neighborhood complexity implies $O(\log^2n)$ contiguity. The latter together with \cref{prop:cont-to-ls} implies \cref{thm:overall-main}.
The core part of this proof strategy is to show how the bound on the neighborhood complexity can be translated to the bound on contiguity.
This is done via a~classical result from the theory of set systems of bounded Vapnik-Chervonenkis dimension on so-called \emph{paths with low crossing number} (\cref{th:path-with-low-crossing}) applied to neighborhood set systems.
While the result about paths with low crossing number is a~known fact, we need it in somewhat unusual form that we could not find elsewhere in the literature.
Therefore, in \cref{sec:app-paths-low-crossing} we present a~self-contained proof of this result in a~suitable form. 

\subsection{Neighborhood complexity of small classes}
\label{sec:neighborhood-complexity}

In this section we show that small classes have low neighborhood complexity. 

\neighborhoodComplexity*
\begin{proof}
	Let $\Cc$ be a~hereditary small class of graphs and $c \geq 1$ be a~constant such that $|\Cc_n| \leqslant n! \cdot c^n$ holds for every $n \in \mathbb{N}$.
	To prove the claim, we will show that for every graph $H$ in $\Cc$ it holds that
	\[
		\nu_H(m) < 9c^2 m \log (m+1) \text{ \qquad for all } m \in [|V(H)|].
	\]
	Suppose this is not the case. Then, by definition, there exists a~graph $H \in \Cc$ and an $n$-element set $A \subseteq V(H)$ such that 
	$|\{N_H(v)\cap A~:~v \in V(H) \}| \geq 9c^2 n \log (n+1)$.
        Note that the latter inequality implies $n \geq 4$.
	Let $B' \subseteq V(H)$ be any maximum-size set of vertices whose neighborhoods have pairwise distinct intersections with $A$ in the graph $H$, and let $B := B' \setminus A$.
        Denote by $G$ the subgraph of $H$ induced by $A \cup B$.
        By the assumption, we have
	\begin{enumerate} 
		\item $|A| = n$,
		\item $|B| \geq |B'| - |A| \geq 9c^2 n \log (n+1) - n > 8c^2 n \log n$,
		\item $A \cap B = \emptyset$, and 
		\item all vertices in $B$ have pairwise distinct neighborhoods in $A$ in graph $G$.
	\end{enumerate}
	
	Let $n_1 := n \lfloor \log n \rfloor $ and $N := n_1 + n$.
        We will show that the hereditary closure of $G$ contains more than $N!  c^N$ labeled graphs on $N$ vertices, i.e., that $|\text{Lab}(\text{IndSub}_N(G))| > N! c^N$, where $\text{IndSub}_N(G)$ is the set of $N$-vertex induced subgraphs of $G$.
        This would contradict our assumption on the number of graphs in $\Cc$. 
	To do this, let us first fix a~labeling of $A$ using the integers from $1$ to $n$. 
	We claim that any two distinct labeled subsets $X_1,X_2 \subseteq B$ of size $n_1$, labeled with the integers from $n+1$ to~$N$, induce two distinct labeled graphs $G_1=G[A \cup X_1]$ and $G_2=G[A \cup X_2]$ contained in $\mathcal{C}$.
        Indeed, since all vertices in $B$ have pairwise distinct neighborhoods in $A$, $G_1$ and $G_2$ contain a~vertex with the same label in $[n+1,N]$ but different neighborhoods in $A$ (i.e.~different neighborhoods among vertices with labels from $[n]$). 
	
	Hence, the number of $N$-vertex labeled graphs in $\Cc$ that are induced subgraphs of $G$ is lower bounded by the number of ways to choose a~labeled subset of size $n_1$ from $B$, which is
	
	\begin{equation}\label{eq:lower-bound}
		n_1! \cdot \binom{|B|}{n_1} \geqslant 	n_1! \cdot \left(\frac{|B|}{n_1}\right)^{n_1} > n_1! \cdot (8c^2)^{n_1}.
	\end{equation}
	We want to show that this is more than the total number of $N$-vertex labeled graphs in $\Cc$, which, by assumption, does not exceed
	\begin{equation}\label{eq:upper-bound}
		N! \cdot c^N = (n_1+n)! \cdot c^{n_1+n} \leqslant n_1! \cdot (n_1+n)^n \cdot c^{2n_1}.
	\end{equation}
	From \cref{eq:lower-bound} and \cref{eq:upper-bound}, we should then show that 
	\[
		n_1! \cdot (8c^2)^{n_1} \geq n_1! \cdot (n_1+n)^n \cdot c^{2n_1},
	\]
	which is equivalent to establishing that
	\[
		8^{n_1} = 2^{3n_1} \geq (n_1+n)^n = 2^{n \log (n_1+n)}.
	\]
	The latter indeed holds since
	\begin{align*}
		n \log (n_1+n) 
		& \leqslant n \log(2 n \lfloor \log n \rfloor) \\
		& \leqslant n \log 2 + n \log n + n \log \lfloor \log n \rfloor \\
		& \leqslant n(1+\log 2) + n \lfloor \log n \rfloor + n \log \lfloor \log n \rfloor \\
		& \leqslant n\lfloor \log n \rfloor + n \lfloor \log n \rfloor + n \log \lfloor \log n \rfloor \\
		& \leqslant n_1 + n_1 + n_1 = 3n_1,
	\end{align*}
	where the penultimate inequality holds because $n \geq 4$.
\end{proof}

\subsection{From neighborhood complexity to contiguity}
\label{sec:neigh-complexity-and-contiguity} 

In this section, we first show how to translate a~bound on neighborhood complexity to a~bound on contiguity (\cref{th:graph-with-low-contiguity}), and then apply it to obtain bounds on contiguity for hereditary small classes (\cref{cor:welzl}).
A crucial technical tool that we use to establish this translation is a~suitable form of a~known result about paths with low crossing number (\cref{th:path-with-low-crossing}). To state this result we must first introduce the required notions. 

Let $(X,\Sc)$ be a~set system.
A~set $S \in \Sc$ \emph{crosses} a~2-element set $\{x,y\} \subset X$ if exactly one of $x$ and $y$ belongs to $S$.
Let $\Fc$ be a~multiset of 2-element subsets in $X$.
The \emph{crossing number of $\Fc$ with respect to a~set $S \in \Sc$} is the number of elements in $\Fc$ that are crossed by $S$.
The \emph{crossing number of $\Fc$ with respect to $\Sc$} is the maximum crossing number of $\Fc$ with respect to a~set $S \in \Sc$.
If $(X,\Fc)$, considered as a~graph, is a~path spanning all elements of $X$, then we say that $(X,\Fc)$ is a~path on $X$.
In this case, the crossing number of $\Fc$ is referred to as the crossing number of the corresponding path.
We are now ready to state the main technical tool whose self-contained proof can be found in \cref{sec:app-paths-low-crossing}.

\begin{restatable}{theorem}{pathcrossing}\label{th:path-with-low-crossing}
	Let $X$ be an $n$-element set, $f : \mathbb{R}_{\geqslant 0} \rightarrow \mathbb{R}_{\geqslant 0}$ be a~strictly increasing function, and $d$ be a~natural number.
	Let $(X, \Sc)$ be a~set system of VC dimension $d$ that satisfies $\pi_{\Sc}^*(m) \leq f(m)$ for all $m \in [n]$.
	Then there exists a~path on $X$ with crossing number at most 
	\[
		2\log |\Sc| + 10d \cdot \sum_{j=1}^{n} \frac{1}{f^{-1}(j/2)}.
	\]
\end{restatable}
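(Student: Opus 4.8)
The plan is to prove \cref{th:path-with-low-crossing} by the classical iterative halving/merging argument going back to Welzl, but carefully tracking the shatter-function bound so that the crossing number comes out in terms of $\sum_j 1/f^{-1}(j/2)$ rather than the usual $n^{1-1/d}$-type bound. First I would build the path greedily by maintaining a set of "fragments" (initially the $n$ singletons of $X$, each a trivial path), and repeatedly halving the number of fragments by pairing them up and adding one edge between each pair. The key is the choice of which fragments to connect at each stage: at a stage with $t$ fragments, we want to pick pairs whose connecting edges are crossed by few sets of $\Sc$. The standard tool here is a potential/weighting argument: assign each set $S \in \Sc$ a weight, initially $1$, and at each step choose an edge (between two current fragment-endpoints) that minimizes the total weight of sets crossing it; then double the weight of every set that does cross the chosen edge. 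Since at a stage with $t$ fragments there are $\Omega(t^2)$ candidate pairs but, crucially, only $\pi_\Sc^*(t) \le f(t)$ distinct "traces" of sets on the $t$ fragment-representatives (this is where the \emph{dual} shatter function enters — two sets with the same trace cross exactly the same fragment-edges), an averaging argument shows some candidate edge is crossed by at most a $(1 - \Omega(1/f(t)))$-fraction of the current total weight, or more precisely by a controlled number of sets.

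The heart of the calculation is the weight bookkeeping. After the whole construction, each set $S$ has final weight $2^{c_S}$ where $c_S$ is the number of path-edges crossing $S$ (i.e.\ its contribution to the crossing number), so $\sum_{S} 2^{c_S}$ equals the final total weight $W_{\mathrm{final}}$. On the other hand, starting from $W_0 = |\Sc|$, at the halving step that goes from $t$ fragments to $\lceil t/2\rceil$ fragments we add $\lfloor t/2 \rfloor$ edges, and each such edge multiplies the total weight by at most $1 + O(d/f(t))$ (the $d$ coming from a Sauer–Shelah / packing bound that controls how many \emph{distinct} behaviours the edges can have on the sets, given VC dimension $d$ — this is what lets us say the minimum-weight edge is crossed by at most an $O(d/f(t))$ fraction of the weight, rather than merely $o(1)$). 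Multiplying over all edges added at that stage gives a factor $\exp\!\big(O(d \cdot t / f(t))\big)$ for the stage, and summing the exponents over the $\log n$ stages $t = n, n/2, n/4, \dots$ telescopes into something comparable to $d \sum_{j=1}^n 1/f^{-1}(j/2)$: the stage with $t$ fragments contributes roughly $d\,t/f(t)$, and the substitution $j \approx f(t)$, $t \approx f^{-1}(j)$ turns the geometric sum over stages into the stated integral-like sum (the factor $1/2$ inside $f^{-1}$ absorbing the halving). Combining, $\max_S 2^{c_S} \le W_{\mathrm{final}} \le |\Sc| \cdot \exp\!\big(O(d)\sum_j 1/f^{-1}(j/2)\big)$, and taking $\log_2$ yields $\max_S c_S \le \log|\Sc| + O(d)\sum_j 1/f^{-1}(j/2)$; chasing the constants ($2$ and $10$) through the averaging step gives exactly the claimed bound.

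I would also need to handle two bookkeeping points. First, the "fragments as paths" detail: when I merge two path-fragments by an edge between two of their endpoints, the result is again a path, and a set $S$ crossing that new edge contributes $1$ to $c_S$; edges added in different stages are distinct, so $c_S$ is genuinely the crossing number of the final spanning path with respect to $S$. Second, the averaging at a stage: with $t$ fragments pick one representative endpoint per fragment, look at the induced set system on these $t$ representatives — it has at most $f(t)$ distinct sets and VC dimension $\le d$, hence by the standard dual-shatter / packing estimate the number of distinct "edge behaviours" (which fragment-pairs a set separates) among all $\binom{t}{2}$ potential edges is bounded, and a weighted counting argument over the at most $f(t)$ effective sets produces a candidate edge crossed by total weight at most $(1 + 10d/f(t))$ times the average — the precise constant $10$ is extracted here. (One has to be slightly careful that a path, not an arbitrary tree, is produced: always connecting fragment-endpoints, and only allowing pairings that keep each fragment a path, suffices, and does not meaningfully reduce the pool of candidate edges at each stage.)

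The main obstacle I expect is making the averaging step at a single stage tight enough to get the $d/f(t)$ rate with an explicit small constant: one must argue that among $\Theta(t^2)$ candidate fragment-pairs there is one whose connecting edge is crossed by only an $O(d/f(t))$ fraction of the current weight, and this requires combining (i) the dual-shatter bound $f(t)$ limiting the number of distinct set-traces on the $t$ representatives with (ii) a Sauer–Shelah-type bound using the VC dimension $d$ to limit how "spread out" each trace's crossing pattern can be over the candidate edges — essentially that a set of VC dimension $d$ restricted to $t$ points cannot cross too large a fraction of \emph{all} pairs while still being only one of $f(t)$ distinct sets. Getting from "$f(t)$ distinct sets" to "some edge is crossed by $\le (1+10d/f(t))\cdot\text{avg weight}$" is the delicate inequality, and its proof is exactly what the self-contained \cref{sec:app-paths-low-crossing} must supply; everything else (the fragment-merging setup, the weight-doubling invariant, and the telescoping sum over $\log n$ stages) is routine once that lemma is in hand.
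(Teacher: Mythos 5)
Your general scheme---greedy edge selection with multiplicative weight doubling, driven by a short-edge estimate---is the right flavor and is broadly what the paper does (it builds a spanning tree one edge at a time via Welzl's weight-doubling argument, then converts the tree to a path by a DFS tour). But two concrete errors break your argument. First, you conflate the primal and dual shatter functions. The number of distinct traces of sets of $\Sc$ on $t$ representative points is controlled by the \emph{primal} shatter function $\pi_{\Sc}(t)$, not by the dual one $\pi_{\Sc}^*(t)$, and the hypothesis supplies an $f$-bound only on the latter. The dual bound actually enters through an explicit dualization: in the Short Edge Lemma one forms the dual set system $\Dc = \{D_x : x \in X\}$ on the weighted multiset $\Qc$, with $D_x = \{Q \in \Qc : x \in Q\}$, notes that $\pi_{\Dc} \le \pi_{\Sc}^*$, and then runs Haussler's Packing Lemma on $\Dc$ to find two ``close'' points $x,y$. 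Your sketch never performs this dualization, so it never actually uses the stated hypothesis on $\pi_{\Sc}^*$.

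Second, and decisively, the rate per edge is wrong. You claim that at a stage with $t$ remaining points some candidate edge is crossed by only an $O(d/f(t))$ fraction of the current weight, and that summing $d\,t/f(t)$ over halving scales $t = n, n/2, n/4, \dots$ reproduces $d\sum_{j=1}^{n} 1/f^{-1}(j/2)$ by a change of variables. Neither is true. The Short Edge Lemma gives a per-edge rate of $O\bigl(d/f^{-1}(t/2)\bigr)$---note the \emph{inverse}---not $O(d/f(t))$; for $f(m)=m^2$ these are $d/t^2$ versus $d/\sqrt{t/2}$, and the claimed substitution fails numerically: with $f(m)=m^2$ one has $\sum_{k\ge 0} (n/2^k)/f(n/2^k) = \Theta(1)$ whereas $\sum_{j=1}^{n} 1/f^{-1}(j/2) = \Theta(\sqrt n)$. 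If your $O(d/f(t))$ rate were correct it would give a spectacularly stronger (and false) bound. As a smaller sign that the halving route is not the one intended, the constants $2$ and $10$ in the theorem (versus $1$ and $5$ for a tree) arise specifically from the DFS tour traversing each tree edge twice when turning the tree into a path, not from any factor of $2$ inside $f^{-1}$; a direct merging construction would not naturally reproduce them.
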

 
We will apply \cref{th:path-with-low-crossing} to neighborhood set systems.
Its importance in our context is due to (a) the fact that the dual shatter function of the neighborhood set system of a~graph coincides with its primal dual function (\cref{obs:primal-dual}), i.e., the neighborhood complexity of the graph; and 
(b) the following observation about close relationship between the crossing number of a~path for the neighborhood set system and the contiguity of the graph.

\begin{observation}\label{obs:cross-ctg}
	Let $G$ be an $n$-vertex graph. 
	If there exists a~path on $V(G)$ that has crossing number with respect to $\Nc_G$ at most $k$, then the contiguity of $G$ is at~most~$k/2  + 1$.
\end{observation}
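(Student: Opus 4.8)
The plan is to establish a bijection-style correspondence between intervals of a linear order and the edge set of a spanning path on $V(G)$. Let $P = (V(G), \Fc)$ be a path on $V(G)$ witnessing crossing number at most $k$ with respect to $\Nc_G$. Since $P$ is a Hamiltonian path, it induces a natural linear order $\sigma = v_1, v_2, \dots, v_n$ on $V(G)$, where $\Fc = \{\{v_i, v_{i+1}\} : 1 \le i \le n-1\}$. I would use this $\sigma$ as the linear order witnessing contiguity and argue that the neighborhood of each vertex decomposes into few intervals along $\sigma$.

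The key observation is the following. Fix a vertex $u \in V(G)$, and consider its neighborhood $N_G(u) \in \Nc_G$. A consecutive pair $\{v_i, v_{i+1}\}$ in the order $\sigma$ is crossed by the set $N_G(u)$ precisely when exactly one of $v_i, v_{i+1}$ lies in $N_G(u)$; that is, when the order $\sigma$ transitions between "inside $N_G(u)$" and "outside $N_G(u)$" at that step. Hence, walking along $\sigma$ from $v_1$ to $v_n$, the number of such transitions equals the crossing number of $\Fc$ with respect to the single set $N_G(u)$, which is at most $k$. Now, if a $0/1$ sequence of length $n$ has at most $k$ transitions, then its set of positions holding a $1$ is a union of at most $\lceil (k+1)/2 \rceil \le k/2 + 1$ intervals: each maximal block of $1$s is flanked by (at most) two transitions, so $t$ blocks use at least $2t - 1$ transitions when the sequence starts or ends with a $1$, and $2t$ transitions otherwise; in either case $t \le k/2 + 1$. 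Applying this with the $0/1$ sequence being the indicator of membership in $N_G(u)$ along $\sigma$ shows that $N_G(u)$ is a union of at most $k/2 + 1$ intervals of $\sigma$.

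Since this holds for every vertex $u$, the order $\sigma$ witnesses that $\ctg(G) \le k/2 + 1$, which is exactly the claim. The only mild subtlety—not really an obstacle—is the relationship between a multiset $\Fc$ of $2$-element sets forming a spanning path and the induced linear order: one should note that a path on $X$ is a genuine (simple) Hamiltonian path, so $\Fc$ has no repeated elements and the order is well-defined, and the $n-1$ consecutive pairs of $\sigma$ are exactly the elements of $\Fc$. Everything else is the elementary counting argument on binary strings with a bounded number of transitions, so I expect no real difficulty; the proof is short and essentially self-contained given the definitions of crossing number and contiguity already in the excerpt.
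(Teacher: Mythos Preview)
Your proposal is correct and follows essentially the same argument as the paper: use the Hamiltonian path to define a linear order, then count transitions in the $0/1$ indicator word of a neighborhood to bound the number of maximal blocks of $1$s. One small slip: when the sequence both starts and ends with a $1$, $t$ blocks yield only $2t-2$ transitions (not $2t-1$), but this still gives $t \le k/2+1$, so your conclusion stands.
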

\begin{proof}
	Let $\{ \{v_1,v_2\}, \{v_2,v_3\}, \ldots, \{v_{n-1}, v_n\}\}$ be a~path on $V(G)$ with crossing number with respect to $\Nc_G$ at~most~$k$.
	We claim that the linear ordering~$\sigma = v_1, v_2, \ldots, v_n$ of $V(G)$ witnesses the target bound on the contiguity of $G$.
	
	Let $v$ be an arbitrary vertex in $G$ and let $N$ be its neighborhood.
	For convenience, mark a~vertex of $G$ by 1 if it belongs to $N$ and 0 otherwise.
        This marking together with the ordering~$\sigma$ correspond to the binary word $w = w_1 w_2 \ldots w_n$, where $w_i$ is the mark of $v_i$ for every $i \in [n]$. 
	A~relation between pairs crossed and neighborhoods partitioned now follows from two observations:
	\begin{enumerate}
		\item in the word $w$, the maximal intervals of consecutive 1s correspond to intervals of consecutive vertices in $\sigma$ that partition the neighborhood $N$ of $v$;
		\item each pair of consecutive letters $w_i w_{i+1}$ in $w$ with $w_i \neq w_{i+1}$ corresponds to a~crossing of $\{v_i,v_{i+1}\}$ by $N$.
	\end{enumerate} To conclude, each interval in the partition of $N$ corresponds to two crossings apart from the at most two intervals at each end of the ordering which each corresponds to one crossing. Thus the number of intervals is at most $2 + \lfloor \frac{k-2}{2} \rfloor \leq k/2 +1$, as claimed.    
\end{proof}

The next theorem translates a~bound on neighborhood complexity to a~bound on contiguity. It follows directly from \cref{th:path-with-low-crossing}, \cref{obs:cross-ctg}, \cref{obs:primal-dual}, and the definition of neighborhood complexity.

\begin{theorem}\label{th:graph-with-low-contiguity}
	Let $G$ be an $n$-vertex graph, $f : \mathbb{R}_{\geqslant 0} \rightarrow \mathbb{R}_{\geqslant 0}$ be a~strictly increasing function, and $d$ be a~natural number.
	If $(V,\Nc_G)$ has VC dimension $d$ and $\nu_G(m) \leq f(m)$ for all $m \in [n]$, then  
	\[
		\ctg(G) \leq 1+ \log n + 5d \cdot \sum_{j=1}^{n} \frac{1}{f^{-1}(j/2)}.
	\]
\end{theorem}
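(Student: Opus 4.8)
The statement to prove is \cref{th:graph-with-low-contiguity}, and as the surrounding text already announces, it should be a near-immediate consequence of three ingredients developed just above: the path-with-low-crossing theorem (\cref{th:path-with-low-crossing}), the translation from crossing number to contiguity (\cref{obs:cross-ctg}), and the identification of the dual shatter function of a neighborhood set system with the neighborhood complexity (\cref{obs:primal-dual}, and the definition of $\nu_G$). So the proof is essentially a matter of plugging these together and doing the arithmetic on the constants.

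\textbf{Key steps in order.} First I would instantiate \cref{th:path-with-low-crossing} with $X = V(G)$ (so $n = |V(G)|$) and $\Sc = \Nc_G$. The hypotheses required there are that the set system has VC dimension $d$ — which is exactly our assumption on $(V,\Nc_G)$ — and that its \emph{dual} shatter function is bounded by the strictly increasing $f$, i.e.\ $\pi^*_{\Nc_G}(m) \le f(m)$ for all $m \in [n]$. Here I invoke \cref{obs:primal-dual}: for a neighborhood set system the primal and dual shatter functions coincide, and both equal $\nu_G$ by definition, so the assumption $\nu_G(m) \le f(m)$ supplies precisely $\pi^*_{\Nc_G}(m) \le f(m)$. \cref{th:path-with-low-crossing} then yields a path on $V(G)$ whose crossing number with respect to $\Nc_G$ is at most
\[
	k \;:=\; 2\log|\Nc_G| \;+\; 10 d \cdot \sum_{j=1}^{n} \frac{1}{f^{-1}(j/2)}.
\]
Second, I bound $|\Nc_G| \le n$ trivially, since $\Nc_G = \{N_G(v) : v \in V(G)\}$ has at most $|V(G)| = n$ elements; hence $2\log|\Nc_G| \le 2\log n$. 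Third, I feed this path into \cref{obs:cross-ctg}, which gives $\ctg(G) \le k/2 + 1$. Substituting the bound on $k$ and the estimate $2\log|\Nc_G|\le 2\log n$ gives
\[
	\ctg(G) \;\le\; 1 + \tfrac{1}{2}\Bigl(2\log n + 10 d \sum_{j=1}^{n} \tfrac{1}{f^{-1}(j/2)}\Bigr) \;=\; 1 + \log n + 5 d \sum_{j=1}^{n}\frac{1}{f^{-1}(j/2)},
\]
which is exactly the claimed bound, completing the proof.

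\textbf{Where the difficulty lies.} There is essentially no obstacle in this particular proof — all the real work has been offloaded into \cref{th:path-with-low-crossing} (whose self-contained proof is deferred to \cref{sec:app-paths-low-crossing}) and into \cref{obs:cross-ctg}. The only things to be careful about are bookkeeping matters: making sure the roles of primal vs.\ dual shatter function are not swapped (the path theorem wants the \emph{dual} bound, and it is \cref{obs:primal-dual} that lets us use the neighborhood complexity for it), checking that $f$ being strictly increasing — hence $f^{-1}$ well-defined on the relevant range — is carried over as a hypothesis, and tracking the factor of $2$ between the crossing-number bound and the contiguity bound so the constant $5d$ (rather than $10d$) and the additive $1 + \log n$ come out correctly. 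All of this is routine, so the proof is short.
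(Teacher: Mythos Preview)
Your proposal is correct and matches the paper's approach exactly: the paper states that the theorem ``follows directly from \cref{th:path-with-low-crossing}, \cref{obs:cross-ctg}, \cref{obs:primal-dual}, and the definition of neighborhood complexity'' without spelling out the arithmetic, which you have carried out correctly.
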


We now apply \cref{th:graph-with-low-contiguity} to hereditary small classes, which have $O(n\log n)$ neighborhood complexity by \cref{thm:neighborhood-complexity}, to obtain a~bound on their contiguity in closed form.  

\begin{theorem}\label{cor:welzl}
	Let $\Xc$ be a~hereditary small class, then the contiguity of $\Xc$ is $ O(\log^2 n)$.
\end{theorem}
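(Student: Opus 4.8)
\textbf{Proof plan for \Cref{cor:welzl}.}
The plan is to apply \Cref{th:graph-with-low-contiguity} to an arbitrary $n$-vertex graph $G \in \Xc$ with the concrete function coming from \Cref{thm:neighborhood-complexity}. First I would record that hereditary small classes have bounded VC dimension of their neighborhood set systems: if $(V(G),\Nc_G)$ shattered a set of size $d$, then $G$ would contain all $2^d$ subsets as traces of neighborhoods on a fixed $d$-set, and a counting argument (essentially the one already used in the proof of \Cref{thm:neighborhood-complexity}, or the standard fact that shattering forces $\nu_G(d) = 2^d$, which contradicts $\nu_G(m) = O(m\log m)$ once $d$ is large) shows $d$ is bounded by a constant depending only on the class. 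So there is a constant $d = d(\Xc)$ with $\mathrm{VCdim}(\Nc_G) \leq d$ for all $G \in \Xc$.

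Next I would set $f(m) := 9c^2 (m+1)\log(m+2)$ (or any fixed constant multiple of $m\log m$ that dominates $\nu_G$ for all $m \in [n]$ and is strictly increasing on $\mathbb{R}_{\geq 0}$); by \Cref{thm:neighborhood-complexity} this satisfies $\nu_G(m) \leq f(m)$ for every $m \in [n]$. The remaining task is purely to estimate the sum
\[
	\sum_{j=1}^{n} \frac{1}{f^{-1}(j/2)}
\]
appearing in \Cref{th:graph-with-low-contiguity}. Since $f(m) = \Theta(m \log m)$, its inverse satisfies $f^{-1}(t) = \Theta\!\left(\frac{t}{\log t}\right)$ for large $t$, so $\frac{1}{f^{-1}(j/2)} = O\!\left(\frac{\log j}{j}\right)$. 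Summing, $\sum_{j=1}^n \frac{\log j}{j} = O(\log^2 n)$ by comparison with $\int_1^n \frac{\log x}{x}\,dx = \tfrac12 \log^2 n$. Plugging this back into \Cref{th:graph-with-low-contiguity} gives
\[
	\ctg(G) \leq 1 + \log n + 5d \cdot O(\log^2 n) = O(\log^2 n),
\]
with the constant depending only on $c$ and $d$, hence only on $\Xc$. Since $G \in \Xc$ was arbitrary, $\Xc$ has contiguity $O(\log^2 n)$.

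The only mildly delicate points are: (i) making sure $f^{-1}$ is well-defined and the asymptotic $f^{-1}(t) = \Theta(t/\log t)$ is justified cleanly — this is routine, e.g.\ by checking $f(t/\log t) = \Theta(t)$ — and handling the first few small values of $j$ (where $f^{-1}(j/2)$ might be less than $1$) by a crude $O(1)$ bound on those finitely many terms; and (ii) confirming the VC-dimension bound, which I expect to be the main conceptual step since \Cref{th:graph-with-low-contiguity} takes $d$ as a hypothesis rather than deriving it. Both are short. Everything else is a direct substitution into the already-established \Cref{th:graph-with-low-contiguity} and \Cref{thm:neighborhood-complexity}.
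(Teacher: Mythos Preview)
Your proposal is correct and follows essentially the same approach as the paper: apply \Cref{th:graph-with-low-contiguity} with $f(m) = \Theta(m\log m)$ from \Cref{thm:neighborhood-complexity}, bound the VC dimension via $2^d \leq \nu_G(d) = O(d\log d)$, estimate $f^{-1}(t) = \Theta(t/\log t)$, and sum $\sum_{j=1}^n \frac{\log j}{j} = O(\log^2 n)$. The paper handles the small-$j$ terms and the inverse estimate in exactly the way you outline.
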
 
\begin{proof}
  Since $\mathcal{X}$ is a~small class, by \cref{thm:neighborhood-complexity}  there exists some constant $C$ such that $\nu_{\mathcal{X}}\leq f(n)$ where $f(n):=Cn\log n$.
  Consequently, for every $n\in \mathbb{N}$ and every $n$-vertex graph $G\in \mathcal{X}$, we have $\pi_{\mathcal{N}_G}(m)\leq f(m)$ for all $m\in [n]$.
  Thus, by definition, the VC dimension $d$ of $(V(G), \mathcal{N}_G)$
  satisfies $2^d \leq Cd \log d$, which implies 
  \[
  		d \leq \log C + \log d + \log \log d \leq \log C + 4d/5,
  \]
  and therefore $d \leq 4 \log C$.
	 
	Now, by \cref{th:graph-with-low-contiguity}, we have
	\begin{equation}\label{eq:thm4.3contents}
		\ctg(G) \leq 1 + \log n + 20 \cdot \log C \cdot \sum_{j=1}^{n} \frac{1}{f^{-1}(j)}.
	\end{equation}
	Since $f$ is increasing on $[1,\infty)$ with image $[0, \infty)$, its inverse $f^{-1}$ exists on $[0, \infty)$, and is also increasing.
    The exact inverse $f^{-1}$ is somewhat complicated, in particular $f^{-1}(1)=e^{W(\ln(2)/C)}$, where $W(\cdot)$ is the Lambert function.
    We consider $f^{-1}(1)>0$ just as a~constant depending on $C$, and show that \begin{equation}\label{eq:inversebdd} f^{-1}(x) \geq \frac{x}{C\log x} \qquad  \text{ for all }x\geq 2.\end{equation}  
	To show \eqref{eq:inversebdd},  since $\log x \geq 1$ for all $x\geq 2$ and we can assume $C\geq 1$, we have 
	\begin{equation}\label{eq:bound4inverse} f\left(\frac{x}{C\log x}\right) = C\cdot \frac{x}{C\log x } \cdot \log \left(\frac{x}{C\log x}\right) \leq x. \end{equation} 
 	Thus, applying $f^{-1}$ to both sides of \eqref{eq:bound4inverse} gives $ f^{-1}(x) \geq \frac{x}{C\log x}$, establishing  \eqref{eq:inversebdd}.

 	Then, by  \eqref{eq:thm4.3contents}, 
	\begin{align*}
		\ctg(G) &\leq 
		1+ \log n + 20 \cdot \log C \cdot \sum_{j=1}^{n} \frac{1}{f^{-1}(j)} \\ &\leq 
		1+ \log n + 20\cdot \log C\cdot e^{-W(\ln(2)/C)} + 20 \cdot \log C\cdot \sum_{j=2}^{n} \frac{C\log j}{j} \\
		&	= 	\Theta( \log^2 n),
	\end{align*}as claimed. 
\end{proof}

The main \cref{thm:overall-main} of this section is a~consequence of~\cref{thm:neighborhood-complexity,cor:welzl,prop:cont-to-ls}.

\section{Conclusion}

In this paper we obtained strong evidence in support of the Small Implicit Graph Conjecture that posits that every hereditary small class admits an $O(\log n)$-bit adjacency labeling scheme. Specifically, we showed that (1) every weakly sparse small class admits such a labeling scheme; (2) every hereditary small class admits an $O(\log^3 n)$-bit adjacency labeling scheme. 
To obtain these results we established two structural properties of hereditary small classes that are of independent interest. 
The first property is that every weakly sparse small class has bounded degeneracy, and even, bounded expansion. 
The second property is that every hereditary small class has neighborhood complexity $O(n \log n)$.

The latter property leaves a tantalizing open question of whether the bound on the neighborhood complexity can be further improved.
All hereditary small classes known to us have linear neighborhood complexity (e.g., classes of bounded twin-width \cite{BKRTW22,BFLP24}), which motivates the following:

\begin{conjecture}\label{conj:small-linear-neigh-complexity}
	Every hereditary small class of graphs has neighborhood complexity $O(n)$.
\end{conjecture}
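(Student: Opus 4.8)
The plan is to reduce to the ``dense'' case and then look for a structural handle. For a \emph{weakly sparse} small class the conjecture already follows from the results of this paper: by \cref{thm:small-ws-deg} such a class has bounded expansion, and every class of bounded expansion has \emph{linear} neighborhood complexity (a standard consequence; see~\cite{RVS19,sparsity}, and note that the implication from bounded expansion to linear $1$-neighborhood complexity does not require closure under subgraphs). Hence the whole difficulty lies with small classes that are \emph{not} weakly sparse --- those containing $K_{t,t}$ as a (not necessarily induced) subgraph for every $t$, such as the class of all complete graphs --- or, more precisely, with superlinear-neighborhood-complexity configurations whose local structure is dense. The pad-and-count argument behind \cref{thm:neighborhood-complexity} is inherently stuck at $O(n\log n)$ here: distinguishing $\mu n$ ``rich'' vertices forces a reference set of size $\Theta(n)$, and hosting sub-copies forces padding with $n_1=\Theta(n)$ fresh vertices, so one needs $n_1!\binom{\mu n}{n_1} > (n_1+n)!\,c^{n_1+n}$, and optimizing $n_1$ shows this can hold only when $\mu=\Omega(\log n)$ --- which is exactly what \cref{thm:neighborhood-complexity} exploits by taking $n_1=n\lfloor\log n\rfloor$. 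To reach $O(n)$ one therefore needs a genuinely more efficient amplification, or a structural input.

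The \emph{structural route} would be to show that every hereditary small class fits inside a family already known (or conjectured) to have linear neighborhood complexity: either it has bounded twin-width, whence one invokes~\cite{BKRTW22,BFLP24} directly; or, more realistically, it is a bounded first-order transduction of a class of bounded expansion --- a ``structurally sparse'' class --- from which one would transfer the linear bound of the sparse case through the transduction. Either way one would use that hereditary small classes are monadically dependent~\cite{Dreier24b} (hence of bounded VC dimension, as already exploited in \cref{thm:neighborhood-complexity} and \cref{cor:welzl}) to forbid ladder-like substructures, and then try to extract from smallness a bounded-depth sparse ``core'' carrying the whole class under a bounded transduction. The \emph{combinatorial route} keeps the counting but iterates it: from a witness consisting of a set $A$ together with $\mu|A|$ vertices having pairwise-distinct traces on $A$, one would like to extract --- using bounded VC dimension together with Haussler-type packing --- a sub-witness on a ground set smaller by a constant factor but with essentially the \emph{same} growth rate $\mu$, then nest $\Theta(\log n)$ such gadgets so that the product of the gains finally exceeds $N!\,c^N$ even for slowly growing $\mu$.

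I expect the main obstacle to be twofold. Combinatorially, superlinear neighborhood complexity does not propagate downward in any obvious way --- the witnessing set may be an independent set with all its structure living in $A$, so a naive recursion collapses --- and one must instead recurse on a subtler invariant while staying inside the hereditary class, which rules out the usual amplification moves (disjoint unions, adding twins). Structurally, it appears to be open whether every hereditary small class has bounded twin-width; indeed the observation motivating \cref{conj:small-linear-neigh-complexity}, namely that all \emph{known} small classes have linear neighborhood complexity, is phrased separately from bounded twin-width, which strongly suggests that small classes of \emph{unbounded} twin-width exist, so the twin-width route cannot suffice in full generality and one would really need the ``structurally sparse'' statement (itself, to my knowledge, not established for small classes). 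Realistically, resolving \cref{conj:small-linear-neigh-complexity} seems to require a dense analogue of the bounded-expansion counting machinery tuned specifically to the definition of smallness; such a proof would also sharpen this paper, upgrading \cref{th:graph-with-low-contiguity} to give $O(\log n)$ contiguity and hence, via \cref{prop:cont-to-ls}, an $O(\log^2 n)$-bit adjacency labeling scheme for all hereditary small classes, and plausibly opening the way toward the full $O(\log n)$ of the Small IGC.
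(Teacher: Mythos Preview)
The statement you were asked to prove is \cref{conj:small-linear-neigh-complexity}, which in the paper is explicitly a \emph{conjecture}, stated in the concluding section as an open problem. The paper provides no proof; it only remarks that all known hereditary small classes have linear neighborhood complexity and that a positive resolution would improve the labeling-scheme bound to $O(\log^2 n)$. There is therefore nothing to compare your attempt against.

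Your write-up is not a proof either, and you seem aware of this: what you have produced is a research outline rather than an argument. As such it is reasonable. You correctly isolate the weakly sparse case as already settled (via \cref{thm:small-ws-deg} and the known linear neighborhood complexity of bounded-expansion classes), you correctly diagnose why the counting argument behind \cref{thm:neighborhood-complexity} stalls at $O(n\log n)$, and your concluding remark about the $O(\log^2 n)$ consequence matches exactly what the paper says after stating the conjecture. The two ``routes'' you sketch are plausible but, as you yourself note, each runs into a genuine open problem (whether every hereditary small class has bounded twin-width, or is structurally sparse). So the honest summary is: the conjecture is open, the paper does not prove it, and your proposal does not prove it either --- it is a survey of obstacles, not a proof.
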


\noindent
If this conjecture is true, our approach would imply $O(\log^2 n)$-bit labeling schemes for all hereditary small classes.

\iftoggle{anonymous}{
}{%
	\bigskip
	\bigskip
	
	\noindent
	\textbf{Acknowledgments.}
	We are grateful to Maksim Zhukovskii for many stimulating discussions on the topic of this paper.
        The first author thanks Szymon Toruńczyk for introducing him to (and answering questions on) the topic of paths with low crossing number.
	This work has been supported by the Royal Society (IES\textbackslash R1\textbackslash 231083), by the ANR projects TWIN-WIDTH (ANR-21-CE48-0014) and Digraphs (ANR-19-CE48-0013), and also the EPSRC project EP/T004878/1: Multilayer Algorithmics to Leverage Graph Structure.
}

\bibliographystyle{alpha}
\bibliography{biblio}

\newcommand{\etalchar}[1]{$^{#1}$}
\begin{thebibliography}{BDS{\etalchar{+}}24b}

\bibitem[AAA{\etalchar{+}}23]{AAALZ23}
Bogdan Alecu, Vladimir~E. Alekseev, Aistis Atminas, Vadim Lozin, and Viktor
  Zamaraev.
\newblock Graph parameters, implicit representations and factorial properties.
\newblock {\em Discrete Mathematics}, 346(10):113573, 2023.

\bibitem[ADK17]{ADK17}
Stephen Alstrup, S{\o}ren Dahlgaard, and Mathias B{\ae}k~Tejs Knudsen.
\newblock Optimal induced universal graphs and adjacency labeling for trees.
\newblock {\em Journal of the ACM (JACM)}, 64(4):1--22, 2017.

\bibitem[Alo23]{Alon23}
Noga Alon.
\newblock Implicit representation of sparse hereditary families.
\newblock {\em Discret. Comput. Geom.}, to appear, 2023.

\bibitem[ALR09]{ALR09}
Peter Allen, Vadim~V. Lozin, and Micha{\"{e}}l Rao.
\newblock Clique-width and the speed of hereditary properties.
\newblock {\em Electron. J. Comb.}, 16(1), 2009.

\bibitem[ALR19]{ALR19}
Aistis Atminas, Vadim~V Lozin, and Igor Razgon.
\newblock Graphs without large bicliques and well-quasi-orderability by the
  induced subgraph relation.
\newblock {\em Journal of Combinatorics}, 10(2):327--337, 2019.

\bibitem[Ban22]{Banerjee22}
Avah Banerjee.
\newblock An adjacency labeling scheme based on a decomposition of trees into
  caterpillars.
\newblock In {\em Combinatorial Algorithms: 33rd International Workshop, (IWOCA
  2022), proceedings}, pages 114--127. Springer, 2022.

\bibitem[BDS{\etalchar{+}}24a]{BDSZZ23}
{\'{E}}douard Bonnet, Julien Duron, John Sylvester, Viktor Zamaraev, and Maksim
  Zhukovskii.
\newblock Small but unwieldy: A lower bound on adjacency labels for small
  classes.
\newblock In {\em Proceedings of the 2024 Annual ACM-SIAM Symposium on Discrete
  Algorithms (SODA 2024)}, pages 1147 -- 1165, 2024.

\bibitem[BDS{\etalchar{+}}24b]{BDSZZ23mono}
{\'E}douard Bonnet, Julien Duron, John Sylvester, Viktor Zamaraev, and Maksim
  Zhukovskii.
\newblock Tight bounds on adjacency labels for monotone graph classes.
\newblock In {\em 51st International Colloquium on Automata, Languages, and
  Programming, {ICALP} 2024}, volume 297 of {\em LIPIcs}, pages 31:1--20, 2024.

\bibitem[BDSZ24]{BDSZ24}
\'{E}douard Bonnet, Julien Duron, John Sylvester, and Viktor Zamaraev.
\newblock {Symmetric-Difference (Degeneracy) and Signed Tree Models}.
\newblock In {\em 49th International Symposium on Mathematical Foundations of
  Computer Science (MFCS 2024)}, volume 306 of {\em LIPIcs}, pages 32:1--32:16,
  2024.

\bibitem[BFLP24]{BFLP24}
{\'E}douard Bonnet, Florent Foucaud, Tuomo Lehtil{\"a}, and Aline Parreau.
\newblock Neighbourhood complexity of graphs of bounded twin-width.
\newblock {\em European Journal of Combinatorics}, 115:103772, 2024.

\bibitem[BGK{\etalchar{+}}22]{BGK22}
{\'E}douard Bonnet, Colin Geniet, Eun~Jung Kim, St\'ephan Thomass\'e, and
  R\'emi Watrigant.
\newblock Twin-width {II}: small classes.
\newblock {\em Combinatorial Theory, 2 (2)}, 2022.

\bibitem[BKR{\etalchar{+}}22]{BKRTW22}
{\'E}douard Bonnet, Eun~Jung Kim, Amadeus Reinald, St{\'e}phan Thomass{\'e},
  and R{\'e}mi Watrigant.
\newblock Twin-width and polynomial kernels.
\newblock {\em Algorithmica}, 84(11):3300--3337, 2022.

\bibitem[BL76]{BL76}
Kellogg~S Booth and George~S Lueker.
\newblock Testing for the consecutive ones property, interval graphs, and graph
  planarity using pq-tree algorithms.
\newblock {\em Journal of computer and system sciences}, 13(3):335--379, 1976.

\bibitem[BP69]{BP69}
Lowell~W Beineke and Raymond~E Pippert.
\newblock The number of labeled k-dimensional trees.
\newblock {\em Journal of Combinatorial Theory}, 6(2):200--205, 1969.

\bibitem[DEG{\etalchar{+}}21]{DEGJMM21}
Vida Dujmovi{\'c}, Louis Esperet, Cyril Gavoille, Gwena{\"e}l Joret, Piotr
  Micek, and Pat Morin.
\newblock Adjacency labelling for planar graphs (and beyond).
\newblock {\em Journal of the ACM (JACM)}, 68(6):1--33, 2021.

\bibitem[DEM{\etalchar{+}}23]{DEMMPT23}
Jan Dreier, Ioannis Eleftheriadis, Nikolas M{\"a}hlmann, Rose McCarty,
  Micha{\l} Pilipczuk, and Szymon Toru{\'n}czyk.
\newblock First-order model checking on monadically stable graph classes.
\newblock {\em arXiv preprint arXiv:2311.18740}, 2023.

\bibitem[DKT13]{DvorakKT13}
Zdenek Dvor{\'{a}}k, Daniel Kr{\'{a}}l, and Robin Thomas.
\newblock Testing first-order properties for subclasses of sparse graphs.
\newblock {\em J. {ACM}}, 60(5):36:1--36:24, 2013.

\bibitem[DMT24]{Dreier24b}
Jan Dreier, Nikolas M\"{a}hlmann, and Szymon Toru\'{n}czyk.
\newblock Flip-breakability: A combinatorial dichotomy for monadically
  dependent graph classes.
\newblock In {\em Proceedings of the 56th Annual ACM Symposium on Theory of
  Computing}, STOC 2024, page 1550–1560, New York, NY, USA, 2024. Association
  for Computing Machinery.

\bibitem[DN10]{DvorakN10}
Zdeněk Dvoř{\'{a}}k and Serguei Norine.
\newblock Small graph classes and bounded expansion.
\newblock {\em J. Comb. Theory, Ser. {B}}, 100(2):171--175, 2010.

\bibitem[DN16]{DvorakN16}
Zdenek Dvor{\'{a}}k and Sergey Norin.
\newblock Strongly sublinear separators and polynomial expansion.
\newblock {\em {SIAM} J. Discret. Math.}, 30(2):1095--1101, 2016.

\bibitem[Dud78]{Dudley}
R.~M. Dudley.
\newblock Central limit theorems for empirical measures.
\newblock {\em Ann. Probab.}, 6(6):899--929, 1978.

\bibitem[Dvo07]{DvovrakThesis}
Zden{\v{e}}k Dvo{\v{r}}{\'a}k.
\newblock Asymptotical structure of combinatorial objects.
\newblock {\em PhD thesis}, 2007.

\bibitem[Dvo18]{Dvorak18b}
Zden{\v{e}}k Dvo{\v{r}}{\'a}k.
\newblock Induced subdivisions and bounded expansion.
\newblock {\em Eur. J. Comb.}, 69:143--148, 2018.

\bibitem[EGK{\etalchar{+}}17]{EGKKP16}
Kord Eickmeyer, Archontia~C. Giannopoulou, Stephan Kreutzer, O{-}joung Kwon,
  Michal Pilipczuk, Roman Rabinovich, and Sebastian Siebertz.
\newblock Neighborhood complexity and kernelization for nowhere dense classes
  of graphs.
\newblock In {\em 44th International Colloquium on Automata, Languages, and
  Programming, {ICALP} 2017}, volume~80 of {\em LIPIcs}, pages 63:1--63:14,
  2017.

\bibitem[FG65]{FG65}
Delbert Fulkerson and Oliver Gross.
\newblock Incidence matrices and interval graphs.
\newblock {\em Pacific journal of mathematics}, 15(3):835--855, 1965.

\bibitem[FP14]{Fox14}
Jacob Fox and J{\'{a}}nos Pach.
\newblock Applications of a new separator theorem for string graphs.
\newblock {\em Comb. Probab. Comput.}, 23(1):66--74, 2014.

\bibitem[GGKS95]{GGKS95}
Paul~W Goldberg, Martin~C Golumbic, Haim Kaplan, and Ron Shamir.
\newblock Four strikes against physical mapping of {DNA}.
\newblock {\em Journal of Computational Biology}, 2(1):139--152, 1995.

\bibitem[GHL16]{GraphBook}
Ralucca Gera, Stephen Hedetniemi, and Craig Larson, editors.
\newblock {\em Graph theory---favorite conjectures and open problems. 1}.
\newblock Problem Books in Mathematics. Springer, 2016.

\bibitem[GL07]{GL07}
Cyril Gavoille and Arnaud Labourel.
\newblock Shorter implicit representation for planar graphs and bounded
  treewidth graphs.
\newblock In {\em 15th Annual European Symposium on Algorithms, {ESA} 2007},
  volume 4698 of {\em LNCS}, pages 582--593. Springer, 2007.

\bibitem[GW00]{GurskiW00}
Frank Gurski and Egon Wanke.
\newblock The tree-width of clique-width bounded graphs without
  \emph{K\({}_{\mbox{n,n}}\)}.
\newblock In {\em Graph-Theoretic Concepts in Computer Science, 26th
  International Workshop, {WG} 2000}, volume 1928 of {\em LNCS}, pages
  196--205. Springer, 2000.

\bibitem[Han82]{Han82}
Phil Hanlon.
\newblock Counting interval graphs.
\newblock {\em Transactions of the American Mathematical Society},
  272(2):383--426, 1982.

\bibitem[Hau95]{Haussler95}
David Haussler.
\newblock {Sphere Packing Numbers for Subsets of the Boolean n-Cube with
  Bounded Vapnik-Chervonenkis Dimension}.
\newblock {\em J. Comb. Theory, Ser. {A}}, 69(2):217--232, 1995.

\bibitem[HH22]{HH22}
Hamed Hatami and Pooya Hatami.
\newblock The implicit graph conjecture is false.
\newblock In {\em 2022 IEEE 63rd Annual Symposium on Foundations of Computer
  Science (FOCS 2022)}, pages 1134--1137. IEEE, 2022.

\bibitem[KLT02]{KLT02}
Ilia Krasikov, Arieh Lev, and Bhalchandra~D Thatte.
\newblock Upper bounds on the automorphism group of a graph.
\newblock {\em Discrete Mathematics}, 256(1-2):489--493, 2002.

\bibitem[KNR88]{KNR88}
Sampath Kannan, Moni Naor, and Steven Rudich.
\newblock Implicit representation of graphs.
\newblock In {\em Proceedings of the twentieth annual ACM symposium on Theory
  of computing (STOC 1988)}, pages 334--343, 1988.

\bibitem[KNR92]{KNR92}
Sampath Kannan, Moni Naor, and Steven Rudich.
\newblock Implicit representation of graphs.
\newblock {\em SIAM Journal on Discrete Mathematics}, 5(4):596--603, 1992.

\bibitem[KO04]{Kuhn04}
Daniela K{\"{u}}hn and Deryk Osthus.
\newblock Induced subdivisions in {$K_{s,s}$}-free graphs of large average
  degree.
\newblock {\em Comb.}, 24(2):287--304, 2004.

\bibitem[KST54]{Kovari54}
Tamás K{\H{o}}v{\'a}ri, Vera~T. S{\'o}s, and P{\'a}l Tur{\'a}n.
\newblock On a problem of {Zarankiewicz}.
\newblock In {\em Colloquium Mathematicum}, volume~3, pages 50--57. Polska
  Akademia Nauk, 1954.

\bibitem[Mat10]{MatGeoDisc}
Ji{\v r}{\'i} Matou{\v s}ek.
\newblock {\em Geometric discrepancy}, volume~18 of {\em Algorithms and
  Combinatorics}.
\newblock Springer-Verlag, Berlin, 2010.
\newblock An illustrated guide, Revised paperback reprint of the 1999 original.

\bibitem[Mul88]{Muller88}
John~H. Muller.
\newblock {\em Local Structure in Graph Classes}.
\newblock PhD thesis, Georgia Institute of Technology, 1988.

\bibitem[NO12]{sparsity}
Jaroslav Nešetřil and Patrice {Ossona de Mendez}.
\newblock {\em Sparsity - Graphs, Structures, and Algorithms}, volume~28 of
  {\em Algorithms and combinatorics}.
\newblock Springer, 2012.

\bibitem[NSTW06]{NSTW06}
Serguei Norine, Paul Seymour, Robin Thomas, and Paul Wollan.
\newblock Proper minor-closed families are small.
\newblock {\em Journal of Combinatorial Theory, Series B}, 96(5):754--757,
  2006.

\bibitem[Ott48]{TreesOtter}
Richard Otter.
\newblock The number of trees.
\newblock {\em Ann. of Math. (2)}, 49:583--599, 1948.

\bibitem[RVS19]{RVS19}
Felix Reidl, Fernando~S{\'a}nchez Villaamil, and Konstantinos Stavropoulos.
\newblock Characterising bounded expansion by neighbourhood complexity.
\newblock {\em European Journal of Combinatorics}, 75:152--168, 2019.

\bibitem[Spi03]{Spi03}
Jeremy~P Spinrad.
\newblock {\em Efficient Graph Representations}, volume~19.
\newblock Fields Institute monographs. American Mathematical Soc., 2003.

\bibitem[Tut62]{Tut62}
William~Thomas Tutte.
\newblock A census of planar triangulations.
\newblock {\em Canadian Journal of Mathematics}, 14:21--38, 1962.

\bibitem[War16]{Warwick16}
{Algorithms, Logic and Structure Workshop in Warwick – Open Problem Session},
  2016.

\bibitem[Wel88]{Welzl88}
Emo Welzl.
\newblock Partition trees for triangle counting and other range searching
  problems.
\newblock In Herbert Edelsbrunner, editor, {\em Proceedings of the Fourth
  Annual Symposium on Computational Geometry (SoCG 1988)}, pages 23--33. {ACM},
  1988.

\end{thebibliography}

\appendix

\section{Paths with low crossing number}\label{sec:app-paths-low-crossing}
The main goal of this section is to provide a~self-contained proof of \cref{th:path-with-low-crossing}. 
To state the theorem, we recall definitions of some crucial notions.
Given a~set system $(X,\Sc)$, we say that a~set $S \in \Sc$ \emph{crosses} a~2-element set $\{x,y\} \subset X$ if exactly one of $x$ and $y$ belongs to $S$.
Let $\Fc$ be a~multiset of 2-element subsets in $X$. The crossing number of $\Fc$ with respect to a~set $S \in \Sc$ is the number of elements in $\Fc$ that are crossed by $S$. The crossing number of $\Fc$ with respect to $\Sc$ is the maximum crossing number of $\Fc$ with respect to a~set $S \in \Sc$.
If $(X,\Fc)$, considered as a~graph, is a~tree or a~path spanning all elements of $X$, then we say that 
$(X,\Fc)$ is a~tree or a~path on $X$, respectively. In this case, the crossing number of $\Fc$ is referred to as the crossing number of the corresponding tree or path, respectively.

\pathcrossing*

This theorem is a~known result stated in an unusual form, but one that is useful for us.
The main feature of the above formulation is that the bound on the dual shatter function is given in a~general form (expressed as a~function $f$) rather than polynomial; in turn, the bound on the crossing number is expressed via the inverse of~$f$.
More specifically, the above formulation has two main differences compared to the standard one (for instance \cite{MatGeoDisc,Welzl88}). 
First, in a~standard formulation, the bound on the dual shatter function is $\pi_{\Sc}^*(m) \leq m^r$ for some constant $r>1$.
Second, the resulting bound on the crossing number of a~path on $X$ is in the form of $O(n^{1-1/r})$. 
Note that if we take $f(m)$ to be  $m^r$ in \cref{th:path-with-low-crossing}, then $f^{-1}(m) = m^{1/r}$ and \[2\log |\Sc| + 10d \cdot \sum_{j=1}^{n} \frac{1}{(j/2)^{1/r}}\] can be upper bounded by $O(n^{1-1/r})$, which recovers the result in a~standard form. 

The benefit of the present formulation is that one can apply \cref{th:path-with-low-crossing} when a~preferable upper bound on $\pi_{\Sc}^*(m)$ is non-polynomial. For example, if $f(m) = m \cdot g(m)$ for some non-decreasing function $g(m)$, then \cref{th:path-with-low-crossing} gives a~bound of $O(g(n) \cdot \log n)$ on the crossing number. In particular, this becomes useful when $g(m) = m^{o(1)}$; e.g., if $g(m) = \log(m)$, the bound on the crossing number becomes $O(\log^2 n)$.

To provide a~self-contained proof of \cref{th:path-with-low-crossing},
we need to state a~number of auxiliary known results in a~suitable (non-standard) form and 
repeat their proofs with some adjustments. Before each statement we explain how its formulation and proof differs from standard ones. We hope this exposition might be of use in some other work. 

\paragraph{Packing lemma.}
We begin with Haussler's Packing Lemma \cite{Haussler95} (\cref{lem:packing}), which is an improvement over a~quantitatively weaker Packing Lemma obtained by Dudley \cite{Dudley} and re-discovered by Welzl \cite{Welzl88}. The proof below is a~simplification by Chazelle of Haussler's proof \cite{Haussler95}, which is taken from Matou{\v{s}}ek's  book \cite{MatGeoDisc}.

A set system $(X,\Sc)$ is \emph{$\delta$-separated}
if for any two distinct sets $S_1, S_2 \in \Sc$ their symmetric difference contains at least $\delta$ elements, i.e., $|(S_1 \setminus S_2) \cup (S_2 \setminus S_1)| \geq \delta$.
In a~standard formulation, the lemma assumes that the shatter function $\pi_{\Sc}(m)$ is bounded by $O(m^r)$ for some $r$, and the bound on the cardinality of a~$\delta$-separated set $\Pc$ is $O((n/\delta)^r)$.
For our purposes, in \cref{lem:packing}, we state the bound on $|\Pc|$ directly in terms of the shatter function.
Note that the standard form of the lemma follows from ours if one uses the bound $\pi_{\Sc}(m) = O(m^r)$.
The difference of the proof below from the one in \cite{MatGeoDisc} is that in \cref{eq:EW1}
we bound $t$ in terms of the shatter function directly, rather than in terms of the bound on the shatter function.
 
\begin{lemma}[Packing Lemma, see~{\cite[Lemma 5.14]{MatGeoDisc}}]\label{lem:packing}
  Let $(X,\mathcal{S})$  be a~set system on an \mbox{$n$-element} set of VC dimension $d$.
  Let $\delta\in [n]$ be an integer, and let $\mathcal{P}\subseteq \mathcal{S}$ be $\delta$-separated.
  Then, 
	\[
		|\mathcal{P}|\leq 2\cdot \pi_{\mathcal{S}}\left(\left\lceil \frac{4dn}{\delta}\right\rceil \right).
	\]
\end{lemma}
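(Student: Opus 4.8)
The plan is to use Haussler's probabilistic argument via Chazelle's simplification, exactly as in Matou\v{s}ek's book, with the single modification that quantities are bounded directly in terms of the shatter function $\pi_{\mathcal{S}}$ rather than a polynomial bound on it. First I would set $s := \lceil 4dn/\delta \rceil$ and pick a uniformly random $s$-element subset $A \subseteq X$. For each set $S \in \mathcal{P}$ consider its trace $S \cap A$; these traces need not be distinct, so define a \emph{representative} subfamily of $\mathcal{P}$ by choosing, for each distinct trace on $A$, one $S \in \mathcal{P}$ achieving it. The number of distinct traces is at most $\pi_{\mathcal{S}}(s)$ by definition of the primal shatter function, so the expected number of ``non-represented'' sets is what we must control against $|\mathcal{P}|$.

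The heart of the argument is a double counting of pairs $(S, A)$ where $S \in \mathcal{P}$ is ``close'' to some other set of $\mathcal{P}$ with the same trace on $A$. Concretely, build the \emph{unit-distance-style graph} $\mathcal{G}$ on vertex set $\mathcal{P}$ (restricted to one random $A$) and argue: (i) on one hand, for each fixed $A$, the traces partition $\mathcal{P}$, and within each trace-class one can order the sets and bound the number of edges of $\mathcal{G}$ using the VC dimension — the crucial combinatorial input is that a $\delta$-separated family whose members all have the same trace on $A$, when restricted to $X \setminus A$, is still $\delta$-separated, and Sauer–Shelah applied on $X \setminus A$ controls its size; (ii) on the other hand, compute $\mathbb{E}[\deg_{\mathcal{G}}(S)]$ for a fixed $S$: a random $A$ of size $s$ separates $S$ from a fixed $S'$ with $|S \triangle S'| \geq \delta$ with probability roughly $1 - (1 - \delta/n)^{s} \geq 1 - e^{-\delta s/n} \geq 1 - e^{-4d}$, which is very close to $1$, so in expectation almost every other set in $S$'s ``neighborhood structure'' gets separated, forcing the average degree in $\mathcal{G}$ to be large — on the order of $2d$. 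Comparing the two estimates, $|\mathcal{P}| \cdot (\text{avg degree}) \le 2 \cdot (\text{edge bound}) \le 2 \cdot d \cdot \pi_{\mathcal{S}}(s)$ roughly, and dividing out yields $|\mathcal{P}| \le 2 \pi_{\mathcal{S}}(s)$.

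Making this precise requires the standard bookkeeping: using $\mathbb{E}[\,\#\text{edges in }\mathcal{G}\,] \le d \cdot \mathbb{E}[\,\#\text{distinct traces on }A\,] \le d\,\pi_{\mathcal{S}}(s)$ from the Sauer–Shelah side, and $\mathbb{E}[\deg_{\mathcal{G}}(S)] \ge$ (something $\ge d$) from the separation side, so that $d\,|\mathcal{P}| \le \sum_S \mathbb{E}[\deg_{\mathcal{G}}(S)] = 2\,\mathbb{E}[\,\#\text{edges}\,] \le 2d\,\pi_{\mathcal{S}}(s)$. The inequality $\mathbb{E}[\deg_{\mathcal{G}}(S)] \ge d$ is where the choice $s = \lceil 4dn/\delta \rceil$ is calibrated: one shows that if $S$ has ``many'' $\delta$-close partners, a random $A$ separates $S$ from at least $d$ of them in expectation, using linearity of expectation and the per-pair separation probability $\ge 1 - e^{-4d} \ge 1/2$ (in fact much closer to $1$), combined with a lower bound on the number of $\delta$-close partners coming from Sauer–Shelah applied \emph{outside} $A$.

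\textbf{Main obstacle.} The step I expect to be most delicate is the lower bound $\mathbb{E}[\deg_{\mathcal{G}}(S)] \ge d$ (uniformly over $S$), since it is precisely the place where one must (a) correctly define $\mathcal{G}$ so that a set $S$ is adjacent to the ``nearest'' representative with the same trace, (b) invoke Sauer–Shelah on $X \setminus A$ to guarantee enough distinct small-symmetric-difference partners exist, and (c) verify the probability bound $(1-\delta/n)^s \le e^{-\delta s/n} \le e^{-4d}$ feeds through to give exactly the constant needed to close the arithmetic with the clean bound $2\pi_{\mathcal{S}}(\lceil 4dn/\delta\rceil)$. Everything else — Sauer–Shelah, linearity of expectation, the double counting — is routine once that calibration is in hand.
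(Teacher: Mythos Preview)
Your high-level orientation is right — this is Chazelle's simplification of Haussler, with the single tweak that the number of trace classes is bounded by $\pi_{\mathcal{S}}(s)$ rather than by a polynomial — but the double counting you sketch does not match the actual argument and, as written, does not close.

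The concrete problem is the graph $\mathcal{G}$ and the two bounds you claim for it. You want simultaneously (a)~$\mathbb{E}[\#\text{edges in }\mathcal{G}]\le d\,\pi_{\mathcal{S}}(s)$ and (b)~$\mathbb{E}[\deg_{\mathcal{G}}(S)]\ge d$ for every $S\in\mathcal{P}$. But (a) is the edge bound for the unit distance graph on the \emph{traces} $\mathcal{Q}=\{S\cap A:S\in\mathcal{P}\}$ (which has at most $\pi_{\mathcal{S}}(s)$ vertices), not on $\mathcal{P}$; once you lift to $\mathcal{P}$, each edge of $\operatorname{UD}(\mathcal{Q})$ blows up into $w(Q)\cdot w(Q')$ pairs, and there is no $d\,\pi_{\mathcal{S}}(s)$ bound. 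Conversely, a graph on $\mathcal{Q}$ has no reason to satisfy a per-vertex degree lower bound involving $|\mathcal{P}|$. Your per-pair separation probability $1-(1-\delta/n)^s\ge 1-e^{-4d}$ is correct but irrelevant here: knowing that a random $A$ separates almost every pair tells you that trace classes are mostly small, which by itself only yields something like $\binom{|\mathcal{P}|}{2}e^{-4d}\ge |\mathcal{P}|-\pi_{\mathcal{S}}(s)$, far from the target.

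What the proof actually does is work with the \emph{weighted} unit distance graph on $\mathcal{Q}$: each $Q$ gets weight $w(Q)=|\{S\in\mathcal{P}:S\cap A=Q\}|$, each edge $\{Q,Q'\}$ gets weight $\min(w(Q),w(Q'))$, and $W$ is the total edge weight. The upper bound $W\le 2d\,|\mathcal{P}|$ is deterministic, obtained by repeatedly peeling off a vertex of degree $\le 2d$ in $\operatorname{UD}(\mathcal{Q})$. The lower bound on $\mathbb{E}[W]$ does \emph{not} come from the separation probability of the whole sample; it comes from the ``last element'' trick: write $A=A'\cup\{a\}$ with $|A'|=s-1$ and $a$ uniform in $X\setminus A'$, so that $\mathbb{E}[W]=s\,\mathbb{E}[W_1]$ where $W_1$ counts only edges whose symmetric-difference element is $a$. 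Conditioning on $A'$, within each of the at most $t\le\pi_{\mathcal{S}}(s)$ trace-classes of size $b$, the contribution to $W_1$ is $\min(b_1,b_2)\ge b_1b_2/b$, and $\mathbb{E}[b_1b_2\mid A']\ge b(b-1)\delta/n$ because a random $a$ lies in $S\triangle S'$ with probability at least $\delta/n$. Summing gives $\mathbb{E}[W]\ge s\cdot\tfrac{\delta}{n}(|\mathcal{P}|-\pi_{\mathcal{S}}(s))\ge 4d(|\mathcal{P}|-\pi_{\mathcal{S}}(s))$, and comparing with the upper bound yields $|\mathcal{P}|\le 2\pi_{\mathcal{S}}(s)$. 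The weighting and the single-element conditioning are the missing ideas; the ``Sauer--Shelah on $X\setminus A$'' step you propose plays no role.
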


Before we prove this lemma we must introduce an auxiliary graph construction. For a~set system $(X, \mathcal{S})$, we define the \emph{unit distance graph} $\operatorname{UD}(\mathcal{S})$ to be the graph with  \[V(\operatorname{UD}(\mathcal{S})) =\mathcal{S}\qquad  \text{and}\qquad E(\operatorname{UD}(\mathcal{S})) = \{ \{S, S'\} :  |S \Delta S'| = 1\}.\]  
We will also need the following bound on the number of its edges. 

\begin{lemma}[{\cite[Lemma 5.15]{MatGeoDisc}}]\label{lem:edgesUD} 
	If $(X,\mathcal{S})$ is a~set system of VC dimension $d$ on a~finite set~$X$ then
	the unit distance graph $\operatorname{UD}(\mathcal{S})$ has at most $d |\mathcal{S}|$ edges.
\end{lemma}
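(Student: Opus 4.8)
The plan is to prove \cref{lem:edgesUD} by induction on $|X|$, using the standard restriction/link decomposition that underlies most elementary VC-dimension arguments (this is the proof in Matou\v{s}ek's book \cite{MatGeoDisc}). The base case $|X|=0$ is immediate: then $\mathcal{S}\subseteq\{\emptyset\}$ contains no two distinct sets, so $\operatorname{UD}(\mathcal{S})$ has no edge, and $0\le d\cdot|\mathcal{S}|$.

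For the inductive step, fix any $x\in X$ and set $\mathcal{S}_x:=\{S\setminus\{x\}:S\in\mathcal{S}\}$ and $\mathcal{S}^x:=\{T\subseteq X\setminus\{x\}: T\in\mathcal{S}\text{ and }T\cup\{x\}\in\mathcal{S}\}$, both regarded as set systems on $X\setminus\{x\}$. I will first recall the three facts that make this decomposition work: (i) $|\mathcal{S}|=|\mathcal{S}_x|+|\mathcal{S}^x|$, by counting for each $T\subseteq X\setminus\{x\}$ how many of $T,T\cup\{x\}$ lie in $\mathcal{S}$; (ii) the VC dimension of $\mathcal{S}_x$ is at most $d$, since any $A\subseteq X\setminus\{x\}$ shattered by $\mathcal{S}_x$ is also shattered by $\mathcal{S}$; and (iii) the VC dimension of $\mathcal{S}^x$ is at most $d-1$, since if $A\subseteq X\setminus\{x\}$ is shattered by $\mathcal{S}^x$ then $A\cup\{x\}$ is shattered by $\mathcal{S}$.

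The core step is to relate $E(\operatorname{UD}(\mathcal{S}))$ to $E(\operatorname{UD}(\mathcal{S}_x))$ and $E(\operatorname{UD}(\mathcal{S}^x))$, by splitting the edges of $\operatorname{UD}(\mathcal{S})$ according to the single element in the symmetric difference of their endpoints. Edges $\{S,S'\}$ with $S\triangle S'=\{x\}$ are exactly the pairs $\{T,T\cup\{x\}\}$ with $T\in\mathcal{S}^x$, so there are precisely $|\mathcal{S}^x|$ of them. Every other edge $\{S,S'\}$ has $S$ and $S'$ agreeing on membership of $x$, so $\{S\setminus\{x\},S'\setminus\{x\}\}$ is an edge of $\operatorname{UD}(\mathcal{S}_x)$; conversely a fixed edge $e=\{T,T'\}$ of $\operatorname{UD}(\mathcal{S}_x)$ can only be the image of the two edges $\{T,T'\}$ and $\{T\cup\{x\},T'\cup\{x\}\}$ of $\operatorname{UD}(\mathcal{S})$, and if it is the image of both then all four of $T,T',T\cup\{x\},T'\cup\{x\}$ lie in $\mathcal{S}$, which forces $T,T'\in\mathcal{S}^x$ and hence $e\in E(\operatorname{UD}(\mathcal{S}^x))$. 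Since moreover $E(\operatorname{UD}(\mathcal{S}^x))\subseteq E(\operatorname{UD}(\mathcal{S}_x))$, adding up the multiplicities yields
\[
|E(\operatorname{UD}(\mathcal{S}))| \;\le\; |\mathcal{S}^x| + |E(\operatorname{UD}(\mathcal{S}_x))| + |E(\operatorname{UD}(\mathcal{S}^x))|.
\]
Applying the induction hypothesis on $X\setminus\{x\}$ to $\mathcal{S}_x$ and to $\mathcal{S}^x$ bounds the last two terms by $d|\mathcal{S}_x|$ and $(d-1)|\mathcal{S}^x|$, and then $|\mathcal{S}^x|+d|\mathcal{S}_x|+(d-1)|\mathcal{S}^x| = d(|\mathcal{S}_x|+|\mathcal{S}^x|) = d|\mathcal{S}|$, completing the induction.

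I don't expect a genuine obstacle; the only points needing care are the multiplicity bookkeeping for the edges "not in direction $x$" — specifically the implication that an edge of $\operatorname{UD}(\mathcal{S}_x)$ covered twice must already belong to $\operatorname{UD}(\mathcal{S}^x)$ — and checking that the degenerate small cases ($d=0$, so $\mathcal{S}$ has a single set and $\mathcal{S}^x=\emptyset$; or $\mathcal{S}=\emptyset$) make the induction properly grounded.
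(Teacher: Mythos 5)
The paper states \cref{lem:edgesUD} without proof, using it as a black box and attributing it to Matou\v{s}ek's book, so there is no in-paper argument to compare against. Your induction on $|X|$ via the standard restriction/link decomposition $\mathcal{S}_x$, $\mathcal{S}^x$ is correct and is in fact the argument in the cited source: the three facts $|\mathcal{S}|=|\mathcal{S}_x|+|\mathcal{S}^x|$, $\mathrm{VCdim}(\mathcal{S}_x)\le d$, $\mathrm{VCdim}(\mathcal{S}^x)\le d-1$ are established correctly, the edge-count splits cleanly into the $|\mathcal{S}^x|$ edges in the $x$-direction plus a two-to-one map from the remaining edges onto $E(\operatorname{UD}(\mathcal{S}_x))$ whose doubly covered fibres all land in $E(\operatorname{UD}(\mathcal{S}^x))$, and the degenerate cases ($d=0$, which forces $\mathcal{S}^x=\emptyset$; $\mathcal{S}=\emptyset$) are benign as you note, so the induction is properly grounded and the arithmetic $|\mathcal{S}^x|+d|\mathcal{S}_x|+(d-1)|\mathcal{S}^x|=d|\mathcal{S}|$ closes the proof.
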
 
 
 With this, we are now ready to prove the Packing Lemma.
\begin{proof}[Proof of \cref{lem:packing}] 
	Choose a~random $s$-element subset $A \subseteq  X$, where $s$ is given by  
        \begin{equation}\label{eq:sdef} s :=\left\lceil \frac{ 4d n}{ \delta} \right\rceil.   \end{equation}
        Set $\mathcal{Q} := \{S\cap A~:~S \in \mathcal{P} \}$, and for each set $Q \in \mathcal{Q}$ define its weight $w(Q)$ as the number of sets $S \in \mathcal{P}$ with $S \cap A~= Q$. Note that \[\sum_{Q\in \mathcal{Q}} w(Q) = |\mathcal{P}|.\]

Let $E:=E(\operatorname{UD}(\mathcal{Q}))$ be the edge set of the unit distance graph $\operatorname{UD}(\mathcal{Q})$, and define an edge weighting of $\operatorname{UD}(\mathcal{Q})$ by giving each edge $e = \{Q, Q'\}\in E$ weight $w(e) := \min(w(Q), w(Q'))$. Let \[W = \sum_{e\in E} w(e), \] where we note that this random variable depends on the random choice of $A$. The desired bound on $|\mathcal{P}|$ in the Packing Lemma is obtained by estimating the expectation of $W$ in two different ways.

First, we claim that for any set $A \subseteq  X$, we have  the following (deterministic) bound on $W$ 
\begin{equation}\label{eq:firstbdd}
	W \leq 2 d\cdot \sum_{Q\in \mathcal{Q}} w(Q)  =2 d\cdot|\mathcal{P}|.
\end{equation}
To see this, note first that the VC dimension of $\mathcal{P}$ is at most $d$, and thus, by \cref{lem:edgesUD}, the unit distance graph $\operatorname{UD}(\mathcal{Q})$ has some vertex $Q_0$ of degree at most $2d$. By removing $Q_0$,  the total edge weight drops by at most $2d \cdot w(Q_0 )$, and we are left with a~unit distance graph on $(X,\mathcal{Q}')$, where $\mathcal{Q}' =\mathcal{Q} \setminus Q_0$ and $\mathcal{Q}'$ has VC dimension at most $d$. We can thus repeat this process until no vertices are left, and we see that the sum of edge weights removed is at most $2d$ times the sum of vertex weights, as claimed.

Next, we bound the expectation $\Ex{W}$ from below by considering the following
random experiment. First, we choose a~random $(s - 1)$-element set $A \subseteq  X$, and then we choose a~random element $a \in X \setminus A'$. The set $A = A' \cup  \{a\}$ is a~uniform random $s$-element subset of $X$,  we consider $\operatorname{UD}(\mathcal{Q})$ with the same edge weighting as above. Each edge of $\operatorname{UD}(\mathcal{Q})$ is a~pair of sets of $\mathcal{Q}$ differing in exactly one element of $A$. We let $E_1 \subseteq  E$ be the edges for which the difference element is $a$, and let $W_1$ be the sum of their weights. By symmetry, we have \begin{equation}\label{eq:eqs1}\Ex{W} = s \cdot  \Ex{W_1}.\end{equation}

We are going to bound $\Ex{W_1}$ from below.
Let $A'  \subset  X$ be an arbitrary but fixed $(s -1)$-element set.
We estimate the conditional expectation $\Ex{W_1 \mid A' }$; 
that is, the expected value of $W_1$ when conditioned on a~fixed set $A'$, and $a \in X\setminus A'$ is selected uniformly at random.
Divide the sets of $\mathcal{P}$ into equivalence classes $\mathcal{P}_1 , \mathcal{P}_2 ,\dots , \mathcal{P}_t$ according to their intersection with the set $A'$. By the definition of $\pi_{\mathcal{S}}(x)$ and since it is non-decreasing, we have  \begin{equation}\label{eq:tbound}t \leq  \pi_{\mathcal{S}}(s -1) \leq \pi_{\mathcal{S}}(s). \end{equation}

Let $\mathcal{P}_i$ be one of the equivalence classes, and $b := |\mathcal{P}_i |$.  Suppose that an element $a \in  X \setminus A'$ has been chosen. If $b_1$ sets of $\mathcal{P}_i$ contain $a$ and $b_2 = b - b_1$ sets do not contain $a$, then the class $\mathcal{P}_i$ gives rise
to an edge of $E_1$ of weight $\min(b_1 , b_2)$. Note that $b_1$ and $b_2$ depend on the choice of $a$ while $b$ does not.

For any non-negative real numbers $b_1 , b_2$ with $b_1 + b_2 = b$, we have $\min(b_1 , b_2 ) \geq  b_1 b_2 /b$.
The value $b_1  b_2$ is the number of ordered pairs of sets $(S_1 , S_2)$ with $S_1 , S_2$ being two sets from the class $\mathcal{P}_i$ where one contains $a$ and the other one does not.
Now, since $\mathcal{P}$ is $\delta$-separated by hypothesis, if $S_1, S_2 \in \mathcal{P}_i$ are two distinct sets, then they differ in at least $\delta$ elements.
Therefore, the probability that $S_1$ and $S_2$ differ in a~random element $a \in X \setminus A'$ is at least $ \frac{\delta}{n -s+1} \geq \delta / n$.
Hence the expected contribution of each pair $(S_1 , S_2 )$ of distinct sets of $\mathcal{P}_i$ to the quantity $b_1 b_2$ is at least $\delta/n$, thus 
\[
	\Ex{b_1 b_2 \mid A'} \geq  b(b - 1)\cdot \frac{\delta}{n}.
\]
Consequently,  the expected contribution $\Ex{\min(b_1 , b_2 ) \mid A' }$ of the equivalence class $\mathcal{P}_i$ to the sum of edge weights $W_1$ is at least $\Ex{b_1 b_2/b \mid A'} \geq (b -1)\delta /n$.
Summing up over all equivalence classes, and using the bound on $t$ from \eqref{eq:tbound},  

\begin{equation}\label{eq:EW1}
	\Ex{W_1} \geq \Ex{W_1 \mid A' } \geq  \frac{\delta }{n}\sum_{i=1}^t \left(|\mathcal{P}_i| -1 \right) \geq   \frac{\delta }{n}\left(|\mathcal{P}| -t \right) \geq  \frac{\delta }{n}\left(|\mathcal{P}| - \pi_{\mathcal{S}}(s) \right). 
\end{equation}Combining this with the estimate \eqref{eq:firstbdd}, the equality \eqref{eq:eqs1}, and the definition \eqref{eq:sdef} of $s$, leads to 

\[
	2d\cdot |\mathcal{P}| \geq 
	s\cdot \frac{\delta }{n }\cdot \left(|\mathcal{P}| - \pi_{\mathcal{S}}(s) \right) 
	\geq  4d\cdot \left(|\mathcal{P}| - \pi_{\mathcal{S}}(s) \right) .   
\]   
Rearranging  gives $|\mathcal{P}| \leq 2\cdot \pi_{\mathcal{S}}(s)$, so the statement follows from \eqref{eq:sdef}. 
\end{proof}

\paragraph{Short Edge Lemma.}

A standard form of the Short Edge Lemma assumes a~polynomial bound on the dual shatter function, i.e., $\pi^*_{\Sc}(m) = O(m^r)$ for some constant $r>1$,
and the resulting bound is expressed via the bound $O(m^{1/r})$ on the inverse of $\pi^*_{\Sc}(m)$.
In our presentation of the Short Edge Lemma (\cref {lem:short-edge}) below we use a~bound on the dual shatter function in a~general from expressed as a~function $f$, and the resulting bound is stated in terms of $f^{-1}$, the inverse of $f$. As before, by plugging in the bound $\pi^*_{\Sc}(m) = O(m^r)$ in \cref {lem:short-edge} one would recover the standard form of the lemma.
The proof is taken from \cite{MatGeoDisc} with suitable adjustments. 

\begin{lemma}[Short Edge Lemma, see~{\cite[Lemma 5.18]{MatGeoDisc}}]\label{lem:short-edge}
	Let $X$ be an $n$-element set, $f : \mathbb{R}_{\geqslant 0} \rightarrow \mathbb{R}_{\geqslant 0}$ be a~strictly increasing function, and $d$ be a~natural number.
	Let $(X, \Sc)$ be a~set system of VC dimension $d$ with $\pi_{\Sc}^*(m) \leq f(m)$ for all $m \in [n]$.
	Then,  for any multiset $\Qc$ with elements from $\Sc$, there exist elements $x,y\in X$ such that the edge $\{x,y\}$ is crossed by at most 
	\[
	5d \cdot \frac{|\Qc|}{f^{-1}(n/2)} 
	\]
	sets of $\Qc$.
\end{lemma}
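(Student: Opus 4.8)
The plan is to prove the Short Edge Lemma by a probabilistic/counting argument that leverages the Packing Lemma (\cref{lem:packing}) applied to the \emph{dual} set system. Here is the setup: fix the multiset $\Qc$ with elements from $\Sc$, and consider a parameter $\delta$ (a target separation value) to be chosen at the end. We pass to the dual: the ground set becomes (a copy of) $\Sc$, and each $x \in X$ gives the dual set $S_x = \{S \in \Sc : x \in S\}$; the dual shatter function of $(X,\Sc)$ is the primal shatter function of the dual system, which is bounded by $f$ by hypothesis. The crossing number of a pair $\{x,y\}$ by sets of $\Qc$ is exactly the number of $S \in \Qc$ lying in the symmetric difference $S_x \triangle S_y$ — i.e., the ``distance'' between $x$ and $y$ measured in the dual system restricted to $\Qc$. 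So the goal ``some pair $\{x,y\}$ is crossed by few sets of $\Qc$'' becomes ``some two dual-sets $S_x, S_y$ are close in the $\Qc$-restricted Hamming metric''.

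The key step is a contrapositive/packing argument: suppose, for contradiction, that \emph{every} pair $\{x,y\}$ is crossed by more than $t := 5d|\Qc|/f^{-1}(n/2)$ sets of $\Qc$. Then the dual sets $S_x \cap \Qc$ (as $x$ ranges over $X$, treating $\Qc$ as the ground set with multiplicities) form a $t$-separated family of size $n$ in a set system whose VC dimension is still at most $d$ (restriction doesn't increase VC dimension) and whose primal shatter function is still bounded by $f$ (restriction, possibly with multiplicities, only shrinks the trace sets). Now apply the Packing Lemma to this $t$-separated family on the ground set $\Qc$ of size $|\Qc|$: it gives $n = |\{S_x \cap \Qc\}| \leq 2 \pi(\lceil 4d|\Qc|/t\rceil) \leq 2 f(\lceil 4d|\Qc|/t \rceil)$. (One must be slightly careful that the $x \mapsto S_x\cap\Qc$ are distinct; if two coincide then that pair is crossed by $0$ sets and we are trivially done, so we may assume distinctness.) Plugging in $t = 5d|\Qc|/f^{-1}(n/2)$ makes $4d|\Qc|/t = \tfrac{4}{5} f^{-1}(n/2) < f^{-1}(n/2)$, hence (after dealing with the ceiling, which is where a small slack/constant-chasing is needed — this is why the constant is $5d$ rather than $4d$) we get $n \le 2 f(f^{-1}(n/2)) = n$, or more precisely a strict inequality $n < n$ once the ceiling is absorbed, which is the desired contradiction.

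The main obstacle I anticipate is the bookkeeping around the ceiling $\lceil 4dn/\delta \rceil$ in the Packing Lemma and the choice of constant: one needs $\lceil 4d|\Qc|/t\rceil \le f^{-1}(n/2)$, and since $4d|\Qc|/t = \tfrac45 f^{-1}(n/2)$ we have a multiplicative slack of $5/4$ that must dominate the $+1$ from the ceiling. This requires a lower bound on $f^{-1}(n/2)$ (equivalently knowing $n$ is not tiny, or that $f$ is not absurdly steep); the $n\ge$ small-cases can presumably be handled separately or absorbed because for very small $n$ there are few sets and the bound is easy, and $f$ being strictly increasing with $f(0)\ge 0$ gives what is needed. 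A secondary point to get right is the passage to multisets: $\Qc$ may contain repeated elements of $\Sc$, so ``$\delta$-separated'' and the shatter-function bounds should be interpreted on the multiset ground set $\Qc$, and one checks the Packing Lemma and $\pi^* \le f$ are unaffected by duplicating ground-set elements. Once these technicalities are pinned down, the argument is a direct one-shot application of \cref{lem:packing} in dual form.
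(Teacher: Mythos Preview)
Your proposal is correct and follows essentially the same approach as the paper: pass to the dual system on ground set~$\Qc$ with sets $D_x = \{S \in \Qc : x \in S\}$, observe that the number of sets of~$\Qc$ crossing $\{x,y\}$ equals $|D_x \triangle D_y|$, note that $\pi_{\Dc} \le \pi_{\Sc}^* \le f$, and apply the Packing Lemma (\cref{lem:packing}) to the $\delta$-separated family~$\{D_x : x \in X\}$. The only cosmetic difference is that the paper leaves $\delta$ as the (unknown) minimum symmetric difference and derives $\delta \le 5d\,|\Qc|/f^{-1}(n/2)$ directly via $f^{-1}(n/2) \le \lceil 4d|\Qc|/\delta\rceil \le 5d|\Qc|/\delta$ (the last step valid since $\delta \le |\Qc| \le d|\Qc|$), which cleanly sidesteps the ceiling issue you anticipated in the contrapositive formulation.
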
 
\begin{proof}
	We form a~set system $\Dc$ which is dual to $\Qc$. That is, we consider the multiset $\Qc$ as the ground set (if some set appears in $\Qc$ several times, then it is considered with the appropriate multiplicity). For each $x \in X$, we let $D_x$ be the set of all sets of $\Qc$ containing $x$, and we set 
	\[
	\Dc=\{D_x : x\in X \}.
	\] 
	The symmetric difference $D_x\Delta D_y$ of two sets from $\Dc$ consists of the sets in $\Qc$ crossing the pair $\{x,y\}$. We thus want to show that $D_x\Delta D_y$ is small for some $x\neq y$. We may assume $D_x\neq D_y$ for $x\neq y$, as otherwise we are done, and hence $|\Dc|=|X|=n$. 
	
	The primal shatter function $\pi_{\Dc}$ is certainly no larger than the dual shatter function $\pi_{\Sc}^{*}$, and hence $\pi_{\Dc}(m) \leq f(m)$ by the assumption on $\pi_{\Sc}^{*}$. Suppose that any two sets $D_x,D_y \in \Dc$ have symmetric difference at least $\delta$, then the packing lemma (\cref{lem:packing}) implies 
	
	\[
	n = |\mathcal{D}| 
	\leq 2\cdot \pi_{\mathcal{S}}^*\left(\left\lceil \frac{4d|\Qc|}{\delta}\right\rceil \right)
	\leq 2f\left(\left\lceil \frac{4d |\Qc|}{\delta}\right\rceil \right).
	\] 
	Since $f$ is strictly increasing, $f^{-1}$ is also strictly increasing. Therefore, we have 
	\[
	f^{-1}(n/2) \leq \left\lceil \frac{4d |\Qc|}{\delta}\right\rceil \leq \frac{5d |\Qc|}{\delta},
	\]
	and therefore 
	\[
	\delta \leq 5d \cdot \frac{|\Qc|}{f^{-1}(n/2)},
	\]
	as claimed.
\end{proof}

\paragraph{Trees with low crossing number.}

As with the Short Edge Lemma, the following result  (\cref{th:tree-with-low-crossing}) is stated with a~general bound $f$ on the dual shatter function and the resulting bound on the crossing number of a~tree is expressed in terms of $f^{-1}$, the inverse of $f$.
The proof is taken from \cite{MatGeoDisc}, which is very similar to Welzl's original proof \cite{Welzl88}.
The statement in \cite{MatGeoDisc} is for crossing number of matchings, and therefore it is slightly modified to work for trees as in \cite{Welzl88}.

\begin{theorem}[see~{\cite[Lemma 5.17]{MatGeoDisc}}]\label{th:tree-with-low-crossing}
	Let $X$ be an $n$-element set, $f : \mathbb{R}_{\geqslant 0} \rightarrow \mathbb{R}_{\geqslant 0}$ be a~strictly increasing function, and $d$ be a~natural number.
	Let $(X, \Sc)$ be a~set system of VC dimension $d$ with $\pi_{\Sc}^*(m) \leq f(m)$ for all $m \in [n]$.
	Then there exists a~tree $T$ on $X$ with crossing number at most 
	\[
	\log |\Sc| + 5d \cdot \sum_{j=1}^{n} \frac{1}{f^{-1}(j/2)}.
	\]
\end{theorem}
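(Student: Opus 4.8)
The plan is to build the tree $T$ greedily, one edge at a time, maintaining a growing forest and always adding a new edge that merges two components while being crossed by few sets. I would track the crossing numbers of the current forest with respect to each $S \in \Sc$, and assign to each $S$ a weight that is exponential in how many times $S$ has crossed an edge added so far; the total weight then controls the maximum crossing number at the end.

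First I would set up the weighting scheme. Start the forest with no edges and every vertex in its own component; give each set $S \in \Sc$ weight $1$, so the total weight is $|\Sc|$. At a general step, suppose the forest has $n - j + 1$ components (so $j - 1$ edges added); pick a system of representatives, one vertex per component, forming an $(n-j+1)$-element subset $Y \subseteq X$. Restrict $\Sc$ to $Y$ and apply the Short Edge Lemma (\cref{lem:short-edge}) with the multiset $\Qc$ taken to be $\Sc$ counted with current weights, restricted to traces on $Y$ — here one uses that restricting a set system only decreases the (dual) shatter function and preserves the VC-dimension bound $d$, so the hypotheses still hold with the same $f$. This yields two representatives $x, y$ such that the edge $\{x,y\}$ is crossed by sets of total weight at most $5d \cdot W_{j-1} / f^{-1}((n-j+1)/2)$, where $W_{j-1}$ is the current total weight. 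Add this edge; it joins two distinct components since $x,y$ are representatives of distinct components.

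Next I would do the weight-update bookkeeping. After adding $\{x,y\}$, double the weight of every set that crosses this new edge. The new total weight is $W_j = W_{j-1} + (\text{weight of crossing sets}) \le W_{j-1}\bigl(1 + \tfrac{5d}{f^{-1}((n-j+1)/2)}\bigr) \le W_{j-1}\exp\bigl(\tfrac{5d}{f^{-1}((n-j+1)/2)}\bigr)$, using $1 + t \le e^t$. After $n-1$ steps the forest is a spanning tree, and telescoping gives
\[
W_{n-1} \le |\Sc| \cdot \exp\!\left( 5d \sum_{j=1}^{n-1} \frac{1}{f^{-1}((n-j+1)/2)} \right) \le |\Sc| \cdot \exp\!\left( 5d \sum_{j=1}^{n} \frac{1}{f^{-1}(j/2)} \right).
\]
Finally, for any fixed $S \in \Sc$, each edge that $S$ crosses doubled $S$'s weight, so if $S$ has crossing number $c$ with the final tree then its final weight is $2^c$, and $2^c \le W_{n-1}$. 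Taking logarithms yields $c \le \log|\Sc| + 5d \sum_{j=1}^{n} 1/f^{-1}(j/2)$, which is the claimed bound.

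The main obstacle is verifying that the Short Edge Lemma can legitimately be invoked at each step on the restricted, re-weighted system: one must confirm that passing to the trace on the representative set $Y$ keeps the VC dimension at most $d$ and keeps the dual shatter function bounded by the same $f$ (monotonicity of these quantities under restriction of the ground set), and that using a weighted multiset $\Qc$ — i.e. listing each $S$ with multiplicity equal to its current weight — is exactly the form in which \cref{lem:short-edge} is stated (it already allows $\Qc$ to be a multiset from $\Sc$), so the "weight crossed" bound $5d|\Qc|/f^{-1}(n/2)$ translates directly into the weighted statement above. The remaining point to be careful about is the index shift in the sum: at step $j$ the ground set has $n-j+1$ elements, and reindexing $j \mapsto n-j+1$ turns $\sum_{j=1}^{n-1} 1/f^{-1}((n-j+1)/2)$ into $\sum_{i=2}^{n} 1/f^{-1}(i/2) \le \sum_{j=1}^{n} 1/f^{-1}(j/2)$, which is where the upper summation limit $n$ (rather than $n-1$) comes from.
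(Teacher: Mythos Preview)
Your proposal is correct and follows essentially the same argument as the paper: a greedy edge-by-edge construction using multiplicative (doubling) weights on the sets, with the Short Edge Lemma applied at each step to the restriction of the system to a shrinking ground set, followed by telescoping and taking logarithms. The paper phrases the shrinking ground set as $X_i = X \setminus \{u_1,\ldots,u_i\}$ (removing one endpoint of each added edge), which is exactly a specific choice of your ``one representative per component''; your observation that restriction to a subset cannot increase the VC dimension or the dual shatter function is precisely what the paper uses implicitly when invoking \cref{lem:short-edge} on $(X_i,\{S\cap X_i : S\in\Sc\})$.
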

\begin{proof}
	We build $T$ by selecting edges one by one according to  the following strategy.
	\begin{enumerate}
		\item Suppose $\{u_1,v_1\}, \ldots, \{u_i,v_i\}$ have already been selected.
		\item\label{itm:haus2} Define the weight $w_i(S)$ of a~set $S \in \Sc$ as $2^{k_i(S)}$, where
		$k_i(S)$ is the number of edges among $\{u_1,v_1\}, \ldots, \{u_i,v_i\}$ crossed by $S$. In particular, $w_0(S) = 1$ for all $S \in \Sc$.
		\item\label{itm:haus3} We select the next edge $\{u_{i+1},v_{i+1}\}$ from $X_i = X \setminus \{u_1, u_2, \ldots, u_i\}$ as a~pair of points with the total weight of sets crossing $\{u_{i+1}, v_{i+1}\}$ being the smallest possible.
		\item We continue in this manner until $n-1$ edges have been selected.
	\end{enumerate}
	
	Let $E := \{ \{u_1,v_1\}, \{u_2,v_2\} \ldots, \{u_{n-1},v_{n-1}\} \}$. Then $T=(X,E)$ is a~tree.
	Next, we will bound the crossing number $k$ of the resulting tree $T$. To this end, we estimate the final total weight of all sets of $\Sc$, i.e.,
	\[
	w_{n-1}(\Sc) = \sum_{S \in \Sc} w_{n-1}(S).
	\]
	
	By definition of $w_{n-1}$ we have $k \leq \log w_{n-1}(\Sc)$.
	
	Let us investigate how $w_{i+1}(\Sc)$ increases compared to $w_i(\Sc)$.
	Let $\Sc_{i+1}$ denote the collection of the sets of $\Sc$ crossing $\{u_{i+1},v_{i+1}\}$.
	For each set in $\Sc_{i+1}$ the weight increases by a~multiplicative factor of two, and for the other sets it remains unchanged. 
	From this we get
	\[
		w_{i+1}(\Sc) 
		= w_i(\Sc) - w_i(\Sc_{i+1}) + 2w_{i}(\Sc_{i+1}) 
		= w_i(\Sc) \left[ 1 + \frac{w_i(\Sc_{i+1})}{w_i(\Sc)} \right].
	\]
	
	We now estimate $\frac{w_i(\Sc_{i+1})}{w_i(\Sc)}$ using the Short Edge Lemma (\cref{lem:short-edge}).
 	To begin we define a~multiset $\Qc_i$ of sets by adding $S \cap X_i$ to $\Qc_i$
	with multiplicity $w_i(S)$ for every $S \in \Sc$. 
	Thus 
	\[
		|\mathcal{Q}_i| = \sum_{S\in \mathcal{S} }w_i(S) = w_i(\Sc).
	\]
	Note that, by the construction of $\Qc_i$, the number of sets in $\Qc_i$ that cross $\{u_{i+1}, v_{i+1}\}$ is $w_i(\Sc_{i+1})$.
        Furthermore, by our choice of edge in \eqref{itm:haus2} and \eqref{itm:haus3},  the edge $\{u_{i+1}, v_{i+1}\}$ is crossed by the minimum number of sets in $\mathcal{Q}_i$.
        Thus, by applying the Short Edge Lemma on the set system 
	$(X_i, \{ S\cap X_i~:~S \in \Sc\})$ to the multiset $\Qc_i$, and defining $n_i := |X_i| = n-i$, we obtain:
	\[
		w_i(\Sc_{i+1}) \leq 5d \cdot \frac{w_i(\Sc)}{f^{-1}(n_i/2)}.
	\]
	Hence, we have
	\[
		w_{i+1}(\Sc) \leq w_i(\Sc) \left[ 1 + \frac{5d}{f^{-1}((n-i)/2)}\right].
	\] Therefore
	\[
	w_{n-1} \leq 
	w_0(\Sc) \cdot \prod_{i=0}^{n-1} \left[ 1 + \frac{5d}{f^{-1}((n-i)/2)}\right]
	= |\Sc| \cdot \prod_{j=1}^{n} \left[ 1 + \frac{5d}{f^{-1}(j/2)}\right].
	\]Taking logarithms of both sides and using the inequality $\ln(1+x) \leq x$ we obtain
	\begin{align*}
		k & \leq \log w_{n-1}(\Sc) \leq \log |\Sc| + 5d \cdot \sum_{j=1}^{n} \frac{1}{f^{-1}(j/2)},
	\end{align*}as claimed.
\end{proof}

\paragraph{From a~tree to a~path.}
Finally, \cref{th:path-with-low-crossing} is obtained from \cref{th:tree-with-low-crossing} by using the following \cref{lem:from-tree-to-path} to ``convert'' a~tree with crossing number $k$ to a~path with crossing number at most $2k$. The argument is taken from \cite{Welzl88} and we provided it here for completeness.

\begin{lemma}[{\cite[Lemma 3.3]{Welzl88}}]\label{lem:from-tree-to-path}
	Let $(X, \Sc)$ be a~set system, and $T$ be a~tree on $X$ with crossing number at most $k$ with respect to $\mathcal{S}$.  Then, there exists a~path on $X$ with crossing number at most $2k$ with respect to $\Sc$. 
\end{lemma}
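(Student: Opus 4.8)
The plan is to use the classical Euler-tour shortcutting argument. Write $n = |X|$. First I would turn $T$ into a closed walk: replace each edge of $T$ by two parallel copies, obtaining a connected multigraph $T'$ on the vertex set $X$ in which every vertex has positive even degree (namely $2\deg_T(v)$). Hence $T'$ admits an Euler circuit $W = (w_0, w_1, \dots, w_{2(n-1)})$ with $w_0 = w_{2(n-1)}$ that uses each of the $2(n-1)$ edge-copies exactly once, so it traverses each edge of $T$ exactly twice, while visiting every vertex of $X$ at least once.

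Next I would extract a Hamiltonian path by shortcutting: scan $W$ starting from $w_0$ and keep only the first occurrence of each vertex, producing an enumeration $u_1, u_2, \dots, u_n$ of $X$; thus $u_j = w_{i_j}$ with $0 = i_1 < i_2 < \dots < i_n \le 2(n-1)$, and every $w_\ell$ with $i_j < \ell < i_{j+1}$ already appears among $u_1, \dots, u_j$. Let $P := (X, E_P)$ with $E_P = \{\{u_1,u_2\}, \dots, \{u_{n-1}, u_n\}\}$; this is a path on $X$ in the sense of the paper.

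The crux is to bound the crossing number of $P$ with respect to an arbitrary fixed $S \in \Sc$. Color each occurrence $w_\ell$ by $1$ if $w_\ell \in S$ and $0$ otherwise, and call an index $\ell \in \{0,\dots,2(n-1)-1\}$ a \emph{flip} if $w_\ell$ and $w_{\ell+1}$ have different colors. A flip at $\ell$ occurs exactly when the edge of $T'$ traversed between $w_\ell$ and $w_{\ell+1}$ corresponds to an edge of $T$ that is crossed by $S$; since $W$ traverses each edge of $T$ exactly twice and $S$ crosses at most $k$ edges of $T$, there are at most $2k$ flips in total. Now suppose $S$ crosses the path edge $\{u_j, u_{j+1}\} = \{w_{i_j}, w_{i_{j+1}}\}$. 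Then $w_{i_j}$ and $w_{i_{j+1}}$ have different colors, so the sub-walk $w_{i_j}, w_{i_j+1}, \dots, w_{i_{j+1}}$ contains at least one flip, at some index $\ell$ with $i_j \le \ell < i_{j+1}$. The ranges $[i_j, i_{j+1})$ for $j = 1, \dots, n-1$ are pairwise disjoint, so this charges distinct crossed path edges to distinct flips. Hence the number of path edges crossed by $S$ is at most the number of flips, which is at most $2k$. Taking the maximum over $S \in \Sc$ shows that $P$ has crossing number at most $2k$ with respect to $\Sc$.

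I do not anticipate a real obstacle. The only points needing a little care are (i) that each edge of $T$ is traversed exactly twice by the Euler circuit, which is what yields the factor $2$ (and nothing worse), and (ii) the disjointness of the index ranges $[i_j, i_{j+1})$, which is precisely what prevents one flip from being charged by two different path edges. The degenerate cases $n = 1$ and $n = 2$ are immediate.
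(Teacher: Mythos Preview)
Your proof is correct and follows essentially the same approach as the paper: both produce a walk on $X$ that traverses each edge of $T$ exactly twice (the paper via a DFS tour, you via an Euler circuit on the doubled tree, which amounts to the same thing) and then shortcut to the path given by the first-visit order. The only cosmetic difference is that the paper packages the shortcutting step as the repeated application of the observation that replacing $\{x,y\},\{y,z\}$ by $\{x,z\}$ cannot increase the crossing number, whereas you carry out an equivalent direct charging of crossed path edges to flips in disjoint index ranges.
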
 
\begin{proof} We begin with the following claim. 
	\begin{claim}\label{clm:edgeremoval}
		Given any family $\mathcal{F}$ of two element sets on $X$,  replacing any pair $\{x,y \}, \{y,z\}\in \mathcal{F}$ by the set $\{x,z\}$ will not increase the crossing number with respect to $\Sc$. 
	\end{claim}
\begin{poc}
  If any $S\in \mathcal{S}$ crosses $\{x,z\}$, then exactly one of  $x$ or $y$ is not contained in $S$.
  Thus, $S$ must cross one of $\{x,y\}$ or $\{y,z\}$. 
\end{poc}
Let $\mathcal{D}= \{\{u_1, u_2\}, \{u_2, u_3 \}, \dots , \{u_{\ell-1}, u_\ell \}\}$ be a~multiset consisting of edges from $T$ in the order they are traversed in a~depth-first search (DFS) tour\footnote{A DFS tour visits all vertices of the tree starting from a given vertex and going as far as possible down a given branch, then backtracking until it finds an unexplored path.} of~$T$ starting from an arbitrary vertex.
Additionally, let  $x_1, \dots, x_n$ be a~labeling of $X$ in the order they are discovered by the DFS tour $\mathcal{D}$. 
	
	Note that each edge of $T$ appears exactly twice in $\mathcal{D}$, thus the crossing number of $\mathcal{D}$ is at most $2k$. We can now apply \cref{clm:edgeremoval} iteratively to reduce $\mathcal{D}$ to $\{\{x_1,x_2\}, \{x_2,x_3\}, \dots,$ $\{x_{n-1}, x_n \}\}$ without increasing the crossing number with respect to $\Sc$.    
\end{proof}
 
\end{document}